\newtheorem{thm}{Theorem}[section]
\newtheorem{lem}[thm]{Lemma}
\newtheorem{rem}[thm]{Remark}
\newtheorem{exmp}[thm]{Example}
\newtheorem{problem}[thm]{Problem}
\newtheorem{cor}[thm]{Corollary}
\newtheorem{lemma}[thm]{Lemma}
\newtheorem{mainthm}[thm]{Main Theorem}
\newtheorem{splitting-lemma}[thm]{Splitting Lemma}
\def\re{\mathrm{Re}}
\def\im{\mathrm{Im}}
\DeclareMathOperator{\id}{Id}
\def\bar#1{\overline{#1}}
\def\det{\mbox{det}}
\def\deg{\mathop{\mathrm{deg}}\nolimits}
\def\const{\mathrm{const}}
\def\R{{\mathbb R}}
\def\H{{\mathbb H}}
\long\def\comment#1\endcomment{}
\begin{document}

%\comment

\title{Surfaces containing two circles through each point}

\author{M. Skopenkov, R. Krasauskas}
\date{}

\maketitle

%\textbf{\dots Probably, the end of the first paper here \dots}

\begin{abstract}
We find all analytic surfaces in $3$-dimensional Euclidean space such that through each point of the surface one can draw two transversal circular arcs fully contained in the surface (and analytically depending on the point). The search for such surfaces traces back to the works of Darboux from XIXth century. We prove that such a surface is an image of a subset of one of the following sets under some composition of inversions:

- the set $\{\,p+q:p\in\alpha,q\in\beta\,\}$, where $\alpha,\beta$ are two circles in $\mathbb{R}^3$;%the surface $\Phi(u,v)=\alpha(u)+\beta(v)$, where $\alpha(u),\beta(v)$ are parametrized circles in $\mathbb{R}^3$; or

- the set $\{\,2\frac{p \times q^{\hspace{3pt}}}{|p+q|^2}:p\in\alpha,q\in\beta,p+q\ne 0\,\}$, where $\alpha,\beta$ are circles in the unit sphere~${S}^2$;
%the stereographic projection of the set $\{\,p\cdot q:p\in\alpha,q\in\beta\,\}$,  where $\alpha,\beta$ are two circles in the sphere $S^3$ identified with the set of unit quaternions;
%the stereographic projection of the surface $\Phi(u,v)=\alpha(u)\cdot \beta(v)$, where $\alpha(u),\beta(v)$ are parametrized circles in the sphere $S^3$ identified with the set of unit quaternions; or

- the set $\{\,(x,y,z): Q(x,y,z,x^2+y^2+z^2)=0\,\}$,
where $Q\in\mathbb{R}[x,y,z,t]$ has degree $2$ or $1$.
%the stereographic projection of the intersection of  $S^3$ with some other 3-dimensional quadric.

%Namely, up to a Moebius transformation such a surface is obtained either:
%- by a parallel translation of one circle along another one; or
%- by a Clifford translation of one circle in a 3-dimensional sphere along another one, followed by stereographic projection; or
%- by intersection of a 3-dimensional sphere with another 3-dimensional quadric, followed by stereographic projection.

The proof uses a new factorization technique for quaternionic polynomials.
%cancellation technique for fractions of quaternionic polynomials.

\smallskip

\noindent{\bf Keywords}: Moebius geometry, circle, quaternion, Pythagorean n-tuple, polynomials factorization

\noindent{\bf 2010 MSC}: 51B10, 13F15, 16H05
\end{abstract}

\footnotetext[0]{
This is an updated version of an article published in Mathematische Annalen. The authenticated published version is available online at: http://dx.doi.org/10.1007/s00208-018-1739-z.
The article was prepared within the framework of the Academic Fund Program at the National Research University Higher School of Economics (HSE) in 2015-2016 (grant No 15-01-0092) and supported within the framework of a subsidy granted to the HSE by the Government of the Russian Federation for the implementation of the Global Competitiveness Program. The first author was also supported by the President of the Russian Federation grant MK-6137.2016.1, ``Dynasty'' foundation, and the Simons--IUM fellowship. The second author was partially supported by the Marie-Curie Initial Training Network SAGA, FP7-PEOPLE contract PITN-GA-214584.
}

\section{Introduction}\label{s:intro}

\begin{tabular}{p{0.23\textwidth}p{0.71\textwidth}}
& \textit{
Dedicated to the last real scientists, searching only for the truth, not career, not glory, not pushing forward their own field or students
}
%This is written for you, the Real Scientist. This is the only way to reach you. All you want is the truth. Not career, not glory, not pushing forward own field or students. %Do not expect the whole truth here, just a tiny particle of it. %The answer to just one question with a simple statement, a practical motivation, but a century-long history and resistance to known methods.
%This is a tiny particle of the truth. You can understand and apply it independently on your specialization.}
%You can understand and apply this tiny particle of the truth independently on your specialization.}
\end{tabular}

\bigskip
%We find all surfaces in space $\mathbb{R}^3$ such that through each point of the surface one can draw two transversal circular arcs fully contained in the surface.
For which surfaces in $3$-dimensional Euclidean space, through each point of the surface one can draw two transversal circular arcs fully contained in the surface?
This is a question which simply must be answered by mathematicians
because of a natural statement and obvious architectural motivation --- recall Shukhov's hyperboloid structures.
% Due to natural statement and obvious architectural motivation (recall Shukhov's hyperboloid structures), this is a question which simply must be answered by mathematicians.
It %turned out to be hard and
remained open in spite of many partial advances %during more than a century.
starting from the works of Darboux from the XIX century.
%In a satellite paper~\cite{Skopenkov-Krasauskas-15} with Krasauskas it was reduced to a purely algebraic problem of finding all Pythagorean 6-tuples of polynomials.
The present paper gives a complete answer.
%by means of a new factorization technique for quaternionic polynomials, and thus completes the solution.

%\vspace{-0.2cm}
\begin{mainthm}\label{mainthm}
If through each point of an analytic surface in $\mathbb{R}^3$ one can draw two transversal circular arcs fully contained in the surface (and analytically depending on the point) then some composition of inversions takes
the surface to a subset of one of the following sets (see Video~\ref{movie}):
%is an image of a subset of one of the following sets under a composition of inversions (see Video~\ref{movie}):
%either a Darboux cyclide, or an Euclidean translational surface, or the stereographic projection of a Clifford translational surface.
%%An analytic surface in ${R}^3$ containing two distinct circular arcs through each point {(analytically depending on the point)} is Moebius equivalent to a subset of either a Darboux cyclide, or a Euclidean translational surface, or the stereographic projection of a Clifford translational surface.
\begin{itemize}
\item[(E)]\label{ETS} the set $\{\,p+q:p\in\alpha,q\in\beta\,\}$,  where $\alpha,\beta$ are two circles in $\mathbb{R}^3$;
%the surface $\Phi(u,v)=\alpha(u)+\beta(v)$, where $\alpha(u),\beta(v)$ are parametrized circular arcs in $\mathbb{R}^3$; or
\item[(C)]\label{CTS} the set $\{\,2\frac{p \times q^{\hspace{3pt}}}{|p+q|^2}:p\in\alpha,q\in\beta,p+q\ne 0\,\}$, where $\alpha,\beta$ are two circles in the unit sphere ${S}^2$;
%the stereographic projection of the set $\{\,p\cdot q:p\in\alpha,q\in\beta\,\}$,  where $\alpha,\beta$ are two circles in the sphere $S^3$ identified with the set of unit quaternions;
\item[(D)]\label{DC} the set $\{\,(x,y,z): Q(x,y,z,x^2+y^2+z^2)=0\,\}$,
where $Q\in\mathbb{R}[x,y,z,t]$ has degree $2$ or $1$.
% stereographic projection of the intersection of $S^3$ and some other 3-dimensional quadric~in~$\mathbb{R}^4$.
\end{itemize}
\end{mainthm}

%\vspace{-0.3cm}

\begin{figure}[!t]
\begin{center}
\vspace{-0.3cm}
\begin{tabular}{c}
\begin{tabular}{c}
\animategraphics[autoplay,loop,height=2.5cm]{7}{euc_A}{0}{30}
%\vspace{-0.1cm}
\end{tabular}
%\quad
\hspace{-0.5cm}
\begin{tabular}{c}
\animategraphics[autoplay,loop,height=2.5cm]{7}{euc_B}{0}{30}
%\vspace{-0.1cm}
\end{tabular}
\\[-0.8cm]
\tiny{(E)}
\end{tabular}
%\quad
%\hspace{-0.3cm}
\begin{tabular}{c}
\begin{tabular}{c}
\animategraphics[autoplay,loop,height=2.0cm]{7}{ani-S2-}{0}{71}
\end{tabular}
%\quad
\hspace{-0.5cm}
\begin{tabular}{c}
\animategraphics[autoplay,loop,height=2.0cm]{7}{ani-S1-}{0}{71}
\end{tabular}
\\[-0.3cm]
\tiny{(C)}
\end{tabular}
%\quad
%\hspace{-0.3cm}
\begin{tabular}{c}
\begin{tabular}{c}
\includegraphics[width=0.15\textwidth]{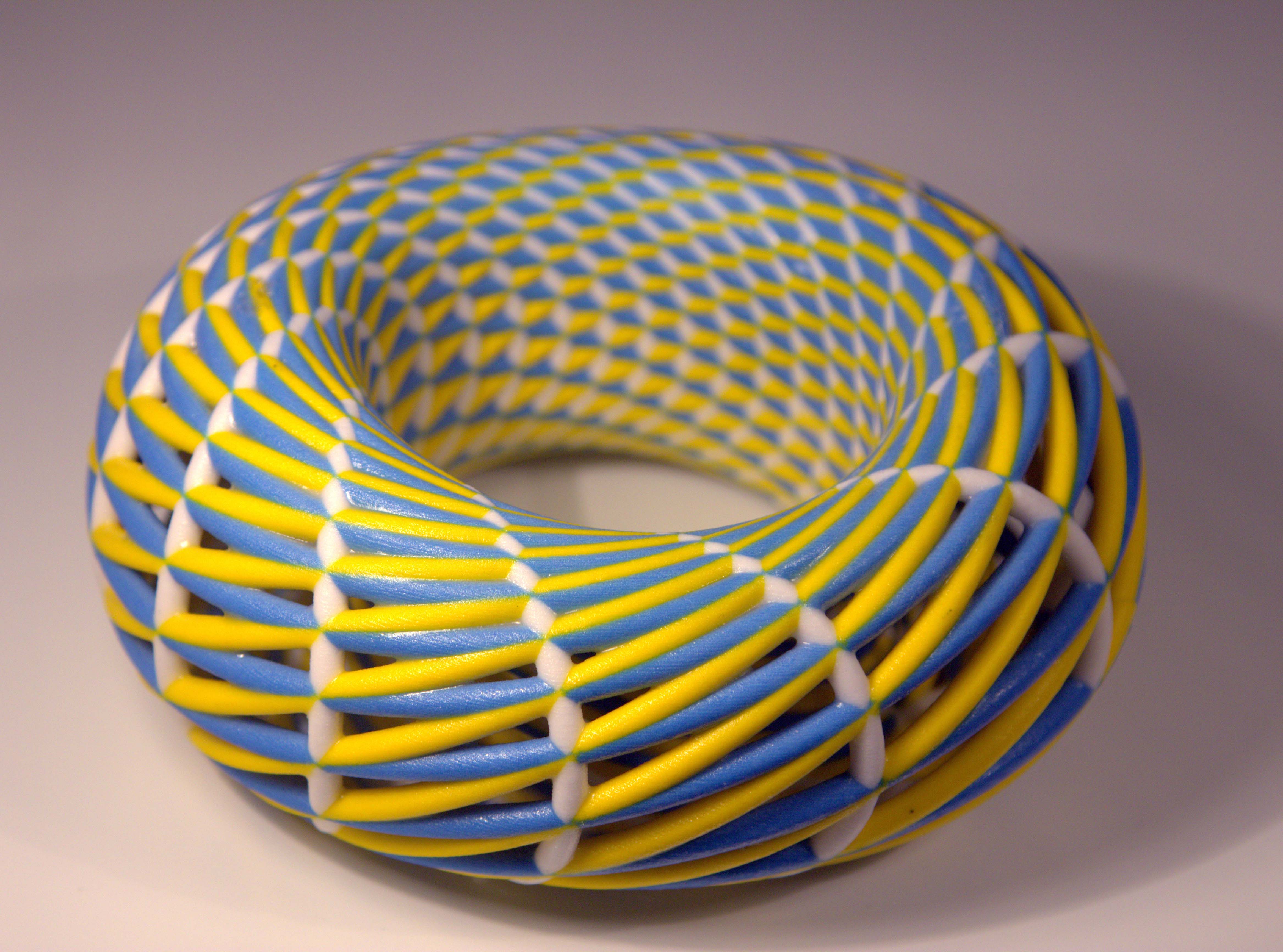}
\end{tabular}	
\\[-0.2cm]
\tiny{(D)}
\end{tabular}
\end{center}
\vspace{-0.6cm}
\renewcommand{\figurename}{Video}
\caption{%A Darboux cyclide and the stereographic projection of a Clifford translational surface
Euclidean (E) and %the stereographic projection of a
Clifford  (C) translational surfaces, and a Darboux cyclide (D) \cite{Lubbes-14}.}
\label{movie}
\vspace{-0.4cm}
\end{figure}

%\vspace{-0.1cm}

Here $p,q$ denote both points in $\mathbb{R}^3$ and their radius-vectors, $p\times q$ denotes the cross product, and $S^2=\{\,p\in\mathbb{R}^3:|p|=1\,\}$. An \emph{analytic surface} in $\mathbb{R}^3$ is the image of an injective real analytic map of a planar domain into $\mathbb{R}^3$ with nondegenerate differential at each point. (Thus informally the theorem is ``local'' in a sense.) A circular arc \emph{analytically depending} on a point is a real analytic map of an analytic surface to the real analytic variety of all circular arcs in $\mathbb{R}^3$. (We have two such maps in the theorem.)  The analyticity is not really a restriction;  see the paragraph before Problem~\ref{pr-curved-chessboard} below.
%But analyticity is not really a restriction \cite{Kataoka-Takeuchi-13}.

%\emph{M\"obius transformations} of $\mathbb{R}^3$ are compositions of inversions in $\mathbb{R}^3$; these are precisely the maps taking circles to circles or lines.
%%We include in the following discussion
%Surfaces in which some circles degenerate to lines are also included in the following brief survey of known results.
%%A \emph{M\"obius} transformation of $\mathbb{R}^n$ is a composition of inversions in $\mathbb{R}^n$.

\subsection*{Background}
The search for surfaces %in $\mathbb{R}^3$
containing $2$ circles or lines through each point traces back to %the works of Darboux from
XIXth century. %In what follows by a \emph{circle} we mean either an ordinary circle in $\mathbb{R}^3$ or a straight line.
Basic examples --- a one-sheeted hyperboloid and
a nonrotational ellipsoid --- are discussed in Hilbert--Cohn-Vossen's ``Anschauliche Geometrie''. There (respectively, in \cite{NS11}) it is proved  that a surface containing $2$ lines (respectively, a line and a circle) through each point is a quadric or a plane. A torus contains $4$ circles through each point: a ``meridian'', a ``parallel'', and two Villarceau circles.

%\begin{center}
	%\includegraphics[width=0.61\textwidth]{Family1.jpg}
  %\animategraphics[autoplay,loop,height=4.5cm]{1.1}{Family}{1}{4}
%\end{center}

%\begin{center}
%\includegraphics[width=0.35\textwidth]{NotDarboux.jpg}\quad
%\includegraphics[width=0.35\textwidth]{Figure5C.png}
%\end{center}

All these examples are particular cases of a \emph{Darboux cyclide}, which is by definition set~(D) in Theorem~\ref{mainthm}.
% a \emph{Darboux cyclide}, i.e., a subset of $\mathbb{R}^3$ given by the equation
%$$
%a(x^2+y^2+z^2)^2+(x^2+y^2+z^2)(bx+cy+dz)+Q(x,y,z)=0,
%$$
%where $a,b,c,d\in\mathbb{R}$ and $Q\in\mathbb{R}[x,y,z]$ of degree $\le 2$ do not vanish simultaneously; see Figure~\ref{movie} to the left.
%Equivalently, a \emph{Darboux cyclide}
It is the stereographic projection of the intersection of $S^3$ with another $3$-dimensional quadric in $\mathbb{R}^4$ \cite[\S2.2]{PSS11}.
Almost every Darboux cyclide contains at least $2$ circles through almost every point \cite{coolidge:1916,PSS11,Takeuchi-00}; see Remark~\ref{rem-reciprocal}
for a list of exceptions.
%, and there is an effective algorithm to count their actual number. 
Conversely, Darboux has shown that %the existence of
$10$ circles through each point guarantee that an analytic %\mscomm{???}
surface is a Darboux cyclide. %\mscomm{!ref!}.
This result has been improved over the years: in fact already $3$, or $2$ orthogonal, or $2$ cospheric circles are sufficient for the same conclusion
\cite[Theorem~3]{Niels-13},
\cite[Theorem~1]{ivey:1995},
\cite[Theorem~20 in p.~296]{coolidge:1916};
  cf. \cite[Theorems~3.4, 3.5]{%Blum,Takeuchi,
NS11}.
Hereafter $2$ circles are %called 
\emph{cospheric}, if they are contained in one %$2$-dimensional
sphere or plane.

%Recently there has been a renewed interest to surfaces containing $2$ circles through each point due to Pottmann who considered their potential applications to architecture \cite{PSS11}.
Pottmann noticed that %the \emph{Euclidean translational surface} $\Phi(u,v)=\alpha(u)+\beta(v)$, where $\alpha(u),\beta(v)$ are parametrized circles in $\mathbb{R}^3$, %obviously
a \emph{Euclidean translational surface}~(E) contains $2$ circles through each point but is not a Darboux cyclide for generic $\alpha,\beta$ \cite[Example~3.9]{NS11}. The same is true for \emph{Clifford translational surface} found by Zub\.e (private communication), which is by definition set~(C) in Theorem~\ref{mainthm}.
 %The definition of the set~(C) shows that quaternions appear naturally in our problem.
It is the stereographic projection of the surface $\{\,p\cdot q:p\in\alpha,q\in\beta\,\}$ in $S^3$, where the unit sphere $S^2$ containing $\alpha$ and $\beta$ is identified with the set of pure imaginary unit quaternions; see~Lemma~\ref{l-clifford}.
The latter surface for arbitrary circles $\alpha,\beta$ in $S^3$ (rather than in $S^2$) is called a \emph{Clifford translational surface in $S^3$}.
Topologically it can be, e.g., an immersed torus not regularly homotopic to a torus of revolution.
It can have degree up to~$8$. Each degree $8$ surface in $S^3$ containing a \emph{great} circle and another circle through each point is Clifford translational
\cite[Corollary~2a]{Lubbes-14}.
%the inverse stereographic projection of either~(C) or~(D) up to a rotation
%Each degree $8$ surface in $S^3$ containing a \emph{great} circle and another circle through each point is the inverse stereographic projection of~(C)
% or an intersection of $S^3$ with another $3$-dimensional quadric \cite[Theorem~5a]{Lubbes-15}.

More examples can be obtained from these translational surfaces by \emph{M\"obius transformations}, i.e., compositions of inversions. %(in $\mathbb{R}^3$ or $\mathbb{R}^4$).
%%Euclidean translational surfaces are limiting cases of the images of Clifford translational surfaces under the stereographic projection followed by \emph{Moebius transformations}, i.e., compositions of inversions in $\mathbb{R}^3$.
Euclidean and Clifford translational surfaces are not M\"obius transformations of each other \cite[Corollary~3]{Lubbes-14}. For related transformations taking lines to circles see \cite{Timorin-07}.
%%They are all algebraic surfaces of degree up to $8$.

  %  %Our main result is that the above examples are the only possible ones.
%We prove that the above ones are the only possible surfaces containing $2$ circles through each point. Let us make this statement precise. We switch to a local problem involving a piece of a surface instead of a closed one and circular arcs instead of circles. By an \emph{analytic surface} in $\mathbb{R}^n$ we mean the image of an injective real analytic map of a planar domain into $\mathbb{R}^n$ with nondegenerate differential at each point. We often use the same notation for the real analytic map and the surface; no confusion arises from this. A circle (or circular arc) \emph{analytically depending} on a point is a real analytic map of an analytic surface into the %$6$-dimensional variety of all circles (or circular arcs) in $\mathbb{R}^n$. %We prefer to use circular arcs rather than circles in statements.

%\mscomm{? Give def of analytic circle family? }

%is obtained either:
%- by a parallel translation of one circle along another one; or
%- by a Clifford translation of one circle in a 3-dimensional sphere along another one, followed by stereographic projection; or
%- by intersection of a 3-dimensional sphere with another 3-dimensional quadric, followed by stereographic projection.

Surfaces containing $2$ circles through each point are particular cases of surfaces containing $2$ conic sections through each point.
The latter have been classified by Brauner, Schicho, Lubbes \cite{Brauner, schicho:2001, Lubbes-14}. In the particular case of so-called supercyclides %(we omit the definition)
a classification was given by Degen \cite{Degen-86}. The Schicho classification is up to a weak equivalence relation (namely, up to linear normalization) and does not allow to extract the surfaces containing $2$ circles through each point.
(We do not define the linear normalization because it is not used in the present paper.)
But his results reduce finding such surfaces in $\mathbb{R}^n$ to solving the equation
\begin{equation}\label{eq-pythagorean-n}
%\label{eq-pythagorean}
X_1^2+\dots+X_{n+1}^2=X_{n+2}^2
%X_1^2+X_2^2+X_3^2+X_4^2+X_5^2=X_6^2
\end{equation}
in polynomials $X_{1},\dots,X_{n+2}\in\mathbb{R}[u,v]$ of degree at most $2$ in each of the variables $u$ and $v$; see Theorem~\ref{haupt}. Almost every such ``Pythagorean $(n+2)$-tuple'' of polynomials defines a %(possibly degenerate)
surface $X_1(u,v):\dots:X_{n+2}(u,v)$ in $S^n$ (in homogeneous coordinates) containing $2$ %two (possibly degenerate)
circles $u=\mathrm{const}$ and $v=\mathrm{const}$ through almost every point. %; the converse is also true in a sense.
%This seems to be the simplest equation of the type not accessible by methods known before.
Eq.~\eqref{eq-pythagorean-n} gives a system of $25$ quadratic equations on $9(n+2)$ coefficients of the polynomials, hence it is not directly accessible for a computer analysis.
%directly analyzed even by modern computers.
%A parametrization of solutions via the inverse stereographic projection $\mathbb{R}^4\to S^4$ does not help much because it describes them all only up to a rational common factor.

Usually equations like~\eqref{eq-pythagorean-n} are solved using \emph{factorization} --- recall parametrization of Pythagorean triples and Euler's approach to the Fermat Last Theorem \cite{Postnikov}. Pythagorean $3$- and $4$-tuples of real polynomials were described in \cite[Theorem~2.2]{Dietz-etal} using that $\mathbb{C}[u,v]$ is a unique factorization domain (UFD). A similar result holds for $6$-tuples of polynomials in one variable (see Corollary~\ref{conj-1-var}, cf.~\cite[Theorem~7.2]{Kocik-07}) because $\mathbb{H}[u]$ is still a UFD in a sense \cite[Theorem~1 in Chapter~2]{Ore}, cf. \cite{Gelfand-etal-05, Motzkin-etal,Gohberg-etal-82,Smertnig-16}, \cite[\S3.5]{Gentili-etal-13}. %%%%%%%%%%%\cite[\S3.5]{Gentili-etal-13}, %Gentili-Stoppato-08},
% \cite[Conjecture~4.2]{Skopenkov-Krasauskas-15}.
%Hereafter $\H$ is the skew field of quaternions.
Next, $6$-tuples of degree $2$ polynomials in $2$ variables were described recently by Koll\'ar \cite[Theorems~3--8]{Kollar-16}; cf.~Remark~\ref{rem-Kollar}. He used algebraic geometry of the Veronese surface; this seems also achievable by factorization; cf. Problem~\ref{pr-deg2}. Factorization is helpless in degree $4$ because $\mathbb{H}[u,v]$ is \emph{not} a UFD, with a degree $4$ counterexample; see Example~\ref{ex-Beauregard} \cite{Beauregard-93}.
This cannot be repaired by a divisor theory because of no adequate multiplication of ideals in noncommutative setup.
 % than of 6-tuples.
%A construction of some (not all) $6$-tuples is given in \cite[Theorem~7.2]{Kocik-07}. %using Kronecker decomposition of matrices.
%This limits applicability of number-theoretic methods \cite{Vasserstein-06, Conway-Sloane-99}.

Another method to describe the solution set is to give \emph{transformations} producing all solutions from a few initial ones recursively. For integral Pythagorean $n$-tuples, $n\le 9$, this was done in \cite{Cass-90}. But this does not give a parametrization of the solution set and is not easily generalized to polynomials.

Finally, a parametrization of Pythagorean $n$-tuples \emph{up to common factor} is immediately given by the inverse stereographic projection. But this does not allow to extract the required degree 4 solutions.

So the case of $6$-tuples of polynomials of degree $2$ in $u$ and $v$ arising in our geometric problem seems to be the simplest case not accessible by known methods. (Description of $5$-tuples is harder.)

%\section{Plan of the proof}

\vspace{-0.3cm}

\subsection*{Plan of the proof}

The proof of Theorem~\ref{mainthm} consists of the following $3$ steps:
\begin{description}
\item[Theorem~\ref{haupt}:] reduction of finding surfaces in $S^n$ to parametrization of Pythagorean $(n+2)$-tuples;
\item[Theorem~\ref{th-pythagorean}:] parametrization of Pythagorean $6$-tuples of small degree; this gives surfaces in $S^4$;
\item[Theorem~\ref{cor-3D}:] extraction of surfaces in $\mathbb{R}^3$ from the obtained set of surfaces in $S^4$.
\end{description}

The first step is standard and goes along the lines of \cite{schicho:2001}. The second step is the heart of the proof and requires completely new ideas. The third step is a new result but the methods are less novel.

Denote by $\mathbb{R}_{mn}\subset\mathbb{R}[u,v]$ the set of polynomials of degree at most $m$ in $u$ and at most $n$ in $v$. %Denote $\mathbb{R}_{*n}:=\bigcup_{m=1}^{\infty}\mathbb{R}_{mn}$.

\begin{thm}\label{haupt}\label{param} %%If an analytic surface in $\mathbb{R}^3$ contains two transversal analytic families of circular arcs then the surface is a cyclide.
Assume that through each point of
an analytic surface in $S^{n}$
%(respectively, in $\mathbb{R}^n$)
one can draw two noncospheric circular arcs fully contained in the surface (and analytically depending on the point).
% has a parametrization
%\begin{equation*}\label{eq-circular}
%\Phi(u,v)=X_{0}(u,v):\dots:X_{4}(u,v),
%%\Phi(u,v)=A_{22}^{(0)}(u,v):\dots:A_{22}^{(n)}(u,v)
%\end{equation*}
%Assume that an analytic surface in $\mathbb{R}^n$ (respectively, in $S^{n-1}$) contains two noncospheric circular arcs through each point (analytically depending on the point).
Assume that through each point in a dense subset of the surface one can draw only finitely many (not nested) circular arcs fully contained in the surface.
Then the surface (besides a one-dimensional subset) has a parametrization
\begin{equation}\label{eq-circular}
%\begin{split}
\Phi(u,v)=X_{1}(u,v):\dots:X_{n+2}(u,v),
%%\Phi(u,v)=A_{22}^{(0)}(u,v):\dots:A_{22}^{(n)}(u,v)
%\end{split}
\end{equation}
%where $A_{22}^{(k)}\in\mathbb{R}[u,v]$ satisfy the equation
where $X_{1},\dots,X_{n+2}\in\mathbb{R}_{22}$ satisfy Eq.~\eqref{eq-pythagorean-n}.
%$X_{1},\dots,X_{n+2}\in\mathbb{R}[u,v]$ have degree at most $2$ in each variable $u$, $v$ and satisfy Eq.~\eqref{eq-pythagorean-n}.
%
%the equation
%\begin{equation}\label{eq-sphere}
%X_{1}^2+\dots+X_{n}^2=X_{0}^2
%\end{equation}
%(respectively, the equation
%\begin{equation*}
%$X_{1}^2+\dots+X_{n}^2=X_{0}Y$
%%$(A_{22}^{(1)})^2+\dots+(A_{22}^{(n)})^2=A_{22}^{(0)}B_{22}$
%%\end{equation*}
%for some $Y\in\mathbb{R}_{22}$). %%of degree at most $2$ in each variable $u$ and $v$
 %$(A_{22}^{(1)})^2+\dots+(A_{22}^{(n)})^2=(A_{22}^{(0)})^2$).
%\begin{equation}\label{eq-paramhom}
%x(s,s',t,t'):y(s,s',t,t'):z(s,s',t,t'):w(s,s',t,t'),
%\end{equation}
%where $x,y,z,w$ are bihomogeneous biquadratic polynomials such that
%$w\,|\,x^2+y^2+z^2$.
%\mscomm{Rem: In dimension $>3$ not proved so far.}
\end{thm}

Here it is crucial that the two circles are noncospheric. But in the opposite case, for $n=3$, it was known that such a surface projects stereographically to a Darboux cyclide \cite[Theorem~20 in p.~296]{coolidge:1916}.

It is convenient to state the next theorems in terms of quaternions; see~\cite{Lavicka-etal-07} for a basic introduction. %(already the definition of the set~(C) shows that quaternions appear naturally in our problem).
Identify $\mathbb{R}^4$ with the set $\mathbb{H}$ of quaternions, and $\mathbb{R}^3$ with the set $\mathrm{Im}\mathbb{H}$ of purely imaginary quaternions.
Denote by $\mathbb{H}[u,v]$ the set of polynomials with quaternionic coefficients (the variables commute with each other and the coefficients).
Define $\mathbb{H}_{mn}$, $\mathbb{C}_{mn}$ analogously to $\mathbb{R}_{mn}$ and set
$\H_{*n}:=\bigcup_{m=1}^{\infty}\H_{mn}$.  %Define $\mathbb{C}_{mn}$, $\mathbb{R}_{mn}$, and $\mathbb{R}_{*n}$ analogously. %For each $P\in\mathbb{H}_{mn}$ and real numbers $\hat u,\hat v$ (but not quaternions) the value $P(\hat u,\hat v)$ is well-defined. %Thus the polynomial $P$ or a rational expression in such polynomials defines a surface in $\mathbb{R}^4$.
%\subsection{Factorization of quaternionic polynomials}
%The main result of the paper is the following theorem which gives a parametrization of all the solutions of the equation
%\begin{equation}\label{eq-pythagorean}
%X_1^2+X_2^2+X_3^2+X_4^2+X_5^2=X_6^2
%\end{equation}
%in polynomials $X_{1},\dots,X_6\in\mathbb{R}_{22}$.

\begin{thm}%[Parametrization of Pythagorean 6-tuples]
\label{th-pythagorean}
%If $X_{1}^2+\dots+X_5^2=X_6^2$ for some $X_{1},\dots,X_6\in\mathbb{R}_{22}$
Polynomials $X_{1},\dots,X_6\in\mathbb{R}_{22}$ satisfy Eq.~\eqref{eq-pythagorean-n} with $n=4$
%\begin{equation}\label{eq-pythagorean}
%X_1^2+X_2^2+X_3^2+X_4^2+X_5^2=X_6^2
%\end{equation}
if and only if up to %M\"obius transformation
a linear transformation $\mathbb{R}^6\to\mathbb{R}^6$ preserving this equation (and not depending on the variables $u,v$) we have
\begin{equation}\label{eq-param-pythagorean}
\begin{aligned}
X_1+iX_2+jX_3+kX_4&=2ABCD,\\
X_5&=(|B|^2-|AC|^2)D,\\
X_6&=(|B|^2+|AC|^2)D
\end{aligned}
\end{equation}
for some $A,B,C\in\mathbb{H}_{11}$, $D\in\mathbb{R}_{22}$ such that $|B|^2D,|AC|^2D\in\mathbb{R}_{22}$.
\end{thm}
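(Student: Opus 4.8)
The plan is to prove both implications, the forward (``only if'') one being the substantial part.

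For sufficiency I would substitute the parametrization \eqref{eq-param-pythagorean} into Eq.~\eqref{eq-pythagorean} and use multiplicativity of the quaternion norm. Writing $q:=X_1+iX_2+jX_3+kX_4$, one has $|q|^2=X_1^2+X_2^2+X_3^2+X_4^2$, so Eq.~\eqref{eq-pythagorean} is equivalent to $q\bar{q}=X_6^2-X_5^2=(X_6-X_5)(X_6+X_5)$. With $q=2ABCD$ the left-hand side equals $4|A|^2|B|^2|C|^2D^2$, while $(X_6-X_5)(X_6+X_5)=(2|AC|^2D)(2|B|^2D)=4|A|^2|C|^2|B|^2D^2$; these coincide. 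The degree hypotheses give $X_5,X_6\in\mathbb{R}_{22}$, whence $q\bar q\in\mathbb{R}_{44}$; since conjugation preserves bidegree and $\mathbb{H}[u,v]$ is a domain (so bidegree is additive under multiplication), this forces $q\in\mathbb{H}_{22}$, i.e. $X_1,\dots,X_4\in\mathbb{R}_{22}$.

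For necessity I would first restate the problem quaternionically: set $q:=X_1+iX_2+jX_3+kX_4\in\mathbb{H}_{22}$, $s:=X_5+X_6$, $d:=X_6-X_5\in\mathbb{R}_{22}$, so that Eq.~\eqref{eq-pythagorean} becomes the single factorization identity $q\bar{q}=s\,d$. The goal is to factor $q$ as $2ABCD$ so that $s$ and $d$ match the norm factors $2|B|^2D$ and $2|AC|^2D$. I would begin by normalizing: the linear transformations of $\mathbb{R}^6$ preserving $X_1^2+\dots+X_5^2-X_6^2$ include the rotations $q\mapsto\mu q\nu$ (unit quaternions $\mu,\nu$), the sign change $X_6\mapsto-X_6$, and the boosts mixing $q$ with $X_5,X_6$; using these I would arrange $s,d\ge0$, normalize leading coefficients, and reduce to a generic bidegree, treating degenerate bidegrees (where some of $A,B,C$ degenerate to constants or reals) as separate, easier cases. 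The heart of the argument is then a factorization of $q$ in $\mathbb{H}[u,v]$. Since $\mathbb{R}(u)$ is formally real, $\mathbb{H}(u):=\mathbb{H}\otimes_{\mathbb{R}}\mathbb{R}(u)$ is a division algebra, so the one-variable ring $\mathbb{H}(u)[v]$ is, by Ore, the ``UFD in a sense'' in which the one-variable argument behind Corollary~\ref{conj-1-var} applies: viewing $q$ as a polynomial of degree $\le2$ in $v$, it factors over $\mathbb{H}(u)$ into linear-in-$v$ factors, and symmetrically over $\mathbb{H}(v)$ into linear-in-$u$ factors. Each linear factor $L$ has a \emph{real} norm, since $(av+b)\overline{(av+b)}=|a|^2v^2+2\,\re(a\bar b)v+|b|^2$, so collecting factors rewrites $s\,d=q\bar q$ as a product of real quadratics; unique factorization in the genuine UFD $\mathbb{R}[u,v]$ then pins down which of these must equal, up to real rational factors, the irreducible factors of $s$ and of $d$. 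This matching is what separates the bilinear factor $B$, whose norm feeds $s$, from the univariate factors $A$ (in $u$) and $C$ (in $v$), whose norms feed $d$, with the common real part collected into $D$.

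The main obstacle is reconciling the two one-variable factorizations: they live over the different division rings $\mathbb{H}(u)$ and $\mathbb{H}(v)$, carry a priori denominators, and cannot be merged by naive unique factorization because $\mathbb{H}[u,v]$ is \emph{not} a UFD (Example~\ref{ex-Beauregard}). This is exactly where the new factorization technique is required: one must show that the factors may be chosen to be honest bidegree-$(1,1)$ quaternionic polynomials $A,B,C$ together with $D\in\mathbb{R}_{22}$, rather than merely rational in each variable. Once this reconciliation is achieved, the remaining bookkeeping is routine: because $\mathbb{H}[u,v]$ has no zero divisors, bidegree is additive under multiplication, so $\deg_u$ and $\deg_v$ of each factor are controlled by those of $q$, yielding $A,B,C\in\mathbb{H}_{11}$, $D\in\mathbb{R}_{22}$ and the conditions $|B|^2D,|AC|^2D\in\mathbb{R}_{22}$, which completes the parametrization \eqref{eq-param-pythagorean}.
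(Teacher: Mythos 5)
Your ``if'' direction is correct (and more careful than the paper's, which dismisses it as straightforward). The ``only if'' direction, however, is an outline whose central step is missing, and one of its supporting claims is false as stated. The false claim: Ore's theory does \emph{not} imply that $q$, viewed in $\mathbb{H}(u)[v]$, splits into linear-in-$v$ factors. Ore gives factorization into \emph{irreducibles}, unique up to similarity; it does not make $\mathbb{H}(u)$ behave like $\mathbb{H}$, where every one-variable quaternionic polynomial splits by the fundamental theorem of algebra (Niven). Indeed $v^2-u$ is irreducible in $\mathbb{H}(u)[v]$: a linear factorization would produce $a\in\mathbb{H}(u)$ with $a^2=u$, and writing $a=a_0+a_1i+a_2j+a_3k$ this forces either $a$ real with $a_0^2=u$ (impossible in $\mathbb{R}(u)$ by degree parity) or $a$ pure imaginary with $a_1^2+a_2^2+a_3^2=-u$ (impossible, since a sum of squares in $\mathbb{R}(u)$ cannot be negative at $u>0$). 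Any splitting of your $q$ must therefore be extracted from the hypothesis $q\bar q=sd$ with $s,d$ real --- essentially the argument of Lemma~\ref{l-adding variable2}, whose proof does extend to coefficients in the field $\mathbb{R}(u)$ --- and not from Ore's theorem.

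The decisive gap is the step you yourself flag with ``one must show'': converting the two rational factorizations (over $\mathbb{H}(u)$ and over $\mathbb{H}(v)$, carrying denominators) into honest factors $A,B,C\in\mathbb{H}_{11}$, $D\in\mathbb{R}_{22}$. That step \emph{is} the theorem; everything preceding it is the easy part. Moreover it cannot be accomplished by matching real irreducible factors of $q\bar q$ against $s$ and $d$, because in general what factors is not $q$ but a \emph{transform} of $q$: Beauregard's polynomial $Q$ (Example~\ref{ex-Beauregard}) satisfies $Q\bar Q=PR$ in bidegree $(2,2)$ and splits over $\mathbb{H}(u)[v]$ (the paper displays a splitting of $(u^2+1)Q$ into linear factors), yet $Q$ is irreducible in $\mathbb{H}[u,v]$; only after transformation~\eqref{eq-linear-transformation} with $T=j$ does it become a product $ABC$. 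So the transformations cannot be relegated to a preliminary normalization, as in your plan: the correct $T$ depends on the factorization structure one is trying to create. The paper's proof supplies exactly this mechanism by interlacing transformations with factorization: Lemma~\ref{l-splitting4} (via the Splitting Lemma of the satellite paper), then Lemma~\ref{th-linear-PDT} by induction on $\deg R$ using transformations~\eqref{eq-linear-transformation} with polynomial $T$, and finally, in the proof of the theorem itself, a transformation with \emph{nonconstant} $T\in\mathbb{H}_{10}$ that kills the $v^2$-part of $Q$ --- stepping outside both the admissible transformations and the bidegree class, since $Q'=Q-TR\in\mathbb{H}_{31}$ --- followed by the left division with remainder $T=AS+T'$, which shows the same effect is achieved by a constant, hence admissible, $T'$. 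Your proposal contains neither this mechanism nor any substitute for it.
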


Beware that  representation~\eqref{eq-param-pythagorean} is in general achievable only after a linear transformation.

For a polynomial $A\in\mathbb{H}[u,v]$ and real numbers $\hat u,\hat v$ (but not quaternions) the value $A(\hat u,\hat v)\in\mathbb{H}$ is well-defined. Thus $A$ or a rational expression in such polynomials defines a (possibly degenerate)
surface in $\mathbb{R}^4$.
%This gives an analytic map $A\colon\mathbb{R}^2\to\mathbb{R}^4$.
The stereographic projection $X_1:\dots:X_6\mapsto (X_1+iX_2+jX_3+kX_4)/(X_6-X_5)$ takes the (possibly degenerate) surface in $S^4$ given by \eqref{eq-circular} and~\eqref{eq-param-pythagorean} for $n=4$ and $ABCD\ne 0$ to
%a \emph{quaternionic rational surface}
\begin{equation}\label{eq-4DBconj}
\begin{split}
&\Phi(u,v)=\bar A(u,v)^{-1}B(u,v)\bar C(u,v)^{-1},\\
&\mbox{where }A,B,C\in\mathbb{H}_{11}\mbox{ satisfy } AC\in\mathbb{H}_{11}- \{0\}.
\end{split}
\end{equation}

\begin{thm}\label{cor-3D} If an analytic surface in $\mathbb{R}^3\subset\mathbb{R}^4$ is a M\"obius transformation of surface~\eqref{eq-4DBconj} in $\mathbb{R}^4$, then the former surface is a M\"obius transformation of a subset of~(E), or~(C), or~(D).
%a Darboux cyclide, or Euclidean or Clifford translational surface.
%or a Euclidean translational surface, or the stereographic projection of a Clifford translational surface.
\end{thm}

%\mscomm{Move the following lemma with the proof in the first paper cyclides0}
%
%\begin{lem} The stereographic projection of the surface $X_1(u,v):\dots:X_6(u,v)$ in $S^4$, where the polynomials $X_1,\dots,X_6\in\mathbb{R}_{22}$ are given by~\eqref{eq-param-pythagorean}, is either a generalized cyclide or a generalized translational surface.
%\end{lem}

%\begin{proof} Consider the following $4$ cases:

%\emph{Case 1}: $C,D=\mathrm{const}$. WithoutIn this case the stereographic projection is the surface $ABCD/|BD|^2=A\bar B^{-1}\dots$, that is, a generalized cyclide.

%\emph{Case 1}: $A,D=\mathrm{const}$. In this case the stereographic projection is the surface $BC/|B^2|=A\bar B^{-1}$, that is, a generalized cyclide.

%\end{proof}

\subsection*{Main tools}

%Theorem~\ref{mainthm} is deduced from Theorem~\ref{th-pythagorean} using the resuts of~\cite{Skopenkov-Krasauskas-15}.
Let us show the origin of key Theorem~\ref{th-pythagorean}. We start with a naive approach (working only for a UFD), then extract the major obstacle, and finally introduce a tool to overcome it.
%Informally, we use an interlacing of factorization and transformation methods in a noncommutative setup.

Denote $Q:=X_1+iX_2+jX_3+kX_4\in \mathbb{H}[u,v]$, $P:=X_6-X_5$, $R:=X_6+X_5$. For $n=4$ Eq.~\eqref{eq-pythagorean-n}  and~\eqref{eq-param-pythagorean} are equivalent to $Q\bar Q=PR$ and $(P,Q,R)=(2|AC|^2D,2ABCD,2|B|^2D)$ respectively.
%That is why the latter equation appears in the assumptions of almost all lemmas below; but different degree restrictions lead to different conclusions.
%In particular, if $P,R\ne\mathrm{const}$ then the equation implies that %the norm square of $Q$
%$Q\bar Q$ is reducible in $\mathbb{R}[u,v]$.

The equation $Q\bar Q=PR$ is easily solved in the commutative UFD $\mathbb{C}[u,v]$.
% The equation $Q\bar Q=PR$ is easily solved in any commutative UFD with an involution $Z\mapsto\bar Z$.
All solutions with $\bar P=P,\bar R=R$ are parametrized by $(P,Q,R)=(A\bar AD,ABD,B\bar BD)$, where $A,B,D\in\mathbb{C}[u,v]$ satisfy $\bar D=D$; cf.~\cite[Proof of Theorem~2.2]{Dietz-etal}. In other words, after cancellation of a common divisor, $Q$ splits into a product of two factors of norm squares $P$ and $R$.

Neither this assertion nor unique factorization hold in $\mathbb{H}[u,v]$ in any reasonable sense:

%, by the following example.
%; see \cite[Example~4.1]{Skopenkov-Krasauskas-15} and Example~\ref{ex-counter} below.

\begin{exmp} \label{ex-Beauregard} (Beauregard \cite{Beauregard-93})
The polynomial
$Q=u^2v^2-1 + (u^2- v^2)i + 2uvj$ is irreducible in $\mathbb{H}[u,v]$ but $Q\bar Q=(u^2-\sqrt{2}u+1)(v^2-\sqrt{2}v+1)\cdot(u^2+\sqrt{2}u+1)(v^2+\sqrt{2}v+1)=:P\cdot R$. Thus
%is reducible in $\mathbb{R}[u,v]$. In particular,
%$$
%%(u^2v^2-1 + (u^2- v^2)i + 2uvj)(u^2v^2-1 -(u^2- v^2)i - 2uvj)=
%Q_B\bar Q_B=(u-e^{i\pi/4})(u-e^{3i\pi/4})(u-e^{5i\pi/4})(u-e^{7i\pi/4})(v-e^{i\pi/4})(v-e^{3i\pi/4})(v-e^{5i\pi/4})(v-e^{7i\pi/4})
%$$
\begin{equation*}
  Q\bar Q=(u-e^{i\pi/4})(u-e^{3i\pi/4})(u-e^{5i\pi/4})(u-e^{7i\pi/4})(v-e^{i\pi/4})(v-e^{3i\pi/4})(v-e^{5i\pi/4})(v-e^{7i\pi/4})
\end{equation*}
are two decompositions in $\mathbb{H}[u,v]$ with different number of irreducible factors.
\end{exmp}

We thus have to solve a nonlinear equation over a noncommutative ring which is not a UFD. We could not find any known methods suitable for that and have to develop a completely new approach.

%Theorem~\ref{th-pythagorean} says that in degree $2$ the parametrization of Lemma~\ref{th-linear-PDT} still holds, but only up to linear transformation preserving the equation $Q\bar Q=PR$.
%The main observation is that the parametrization $(P,Q,R)=(|AC|^2D,ABCD,|B|^2D)$ holds
To overcome the obstacle, we perform linear transformations preserving the equation $Q\bar Q=PR$ and simplifying the solution in a sense. %(this is what Theorem~\ref{th-pythagorean} is saying).
Our transformations usually have form
\begin{equation} \label{eq-linear-transformation}
(P',\,Q',\,R')=(P-T\bar Q-Q\bar T+TR\bar T,\,Q-TR,\,R)
%, \qquad T\in\mathbb{H}[u].
%\left(\begin{smallmatrix}P' & Q'\\ \bar Q' & R'\end{smallmatrix}\right)= \left(\begin{smallmatrix}Y & T\\ 0 & 1\end{smallmatrix}\right)\left(\begin{smallmatrix}P & Q\\ \bar Q & R\end{smallmatrix}\right)\left(\begin{smallmatrix}Y & 0\\ \bar T & 1\end{smallmatrix}\right)
\end{equation}
with $T\in\mathbb{H}$.
They correspond to linear transformations $\mathbb{R}^6\to\mathbb{R}^6$ in Theorem~\ref{th-pythagorean}. Transformations~\eqref{eq-linear-transformation} with nonconstant $T\in\mathbb{H}[u,v]$ are also instrumental, although not allowed in the theorem. An interlacing of factorization and transformation methods leads to a surprisingly short proof~of~\ref{th-pythagorean}. %of Theorem~\ref{th-pythagorean}. %So we are able to parametrize an infinite solution set of a nontrivial nonlinear equation over a noncommutative non-UFD ring. We could not find any known methods achieving that.

%\begin{thm}%[Polynomial Decomposition Theorem]
%\label{th-PDT}
%If $Q\bar Q=PR$ for some $Q\in\mathbb{H}_{22}$ and nonconstant $P,R\in\mathbb{R}_{22}$ then there is $q\in\mathbb{H}$ such that $Q-qR$ is either reducible in $\mathbb{H}[u,v]$ or vanishes identically.
%\end{thm}

\begin{exmp} \label{ex-transformation}
%%In Example~\ref{ex-Beauregard}
%%We have
For $T=j$ the polynomials from  Example~\ref{ex-Beauregard} are transformed to %~\eqref{eq-linear-transformation} gives
$(P',Q',R')=(|AC|^2,ABC,|B|^2)$,
%$(R_B,Q_B,P_B)=(|B|^2,ABC-T|B|^2,|AC|^2-ABC\bar T-T\bar C\,\bar B\,\bar A+T|B|^2\bar T).$
%%$(P_B+iQ_B-i\bar Q_B+R_B,Q_B-iR_B,R_B)=(|AC|^2,ABC,|B|^2)$
where
% REDUCIBLE! $A=u-j$, $B=u+v-i$, $C=v-j$.
%$A=(i+j)u+1$, $B=u+v+i$, $C=k-(i+j)v$.
$A=(1-j)\left(u+\frac{-i-j}{\sqrt2}\right)$,
$B=\left(v+\frac{1+k}{\sqrt2}\right)
\left(u+\frac{1+i}{\sqrt2}\right)$,
$C=v+\frac{-j-k}{\sqrt2}$.
%Example~\ref{ex-degree3} itself is a transformation of a known one \cite{Beauregard-93}.
%%%%%%5
%, $T=-i$.
%%$Q_B-iR_B=(1-i)(u+\frac{-i-j}{\sqrt2})(v+\frac{1-k}{\sqrt2})(u+\frac{1+j}{\sqrt2})(v+\frac{k-i}{\sqrt2}).$
\end{exmp}

\begin{rem}
Almost every $T,A,B,C$ of the same degrees as in ~\ref{ex-transformation} produce examples like~\ref{ex-Beauregard}.
%irreducible $Q$ with reducible $Q\bar Q$.
%get more (conjecturally, almost all) such examples.
\end{rem}

\subsection*{Organization of the paper}

Theorems~\ref{mainthm}, \ref{haupt}, \ref{th-pythagorean}, \ref{cor-3D} are proved in \S\S~\ref{sec-open}, \ref{sec-parametrization}, \ref{sec-factorization}, \ref{sec-classification} respectively. Sections~\ref{sec-factorization}--\ref{sec-parametrization} are independent except that Splitting Lemma~\ref{l-splitting-basic} from \S\ref{sec-factorization} is also used in \S\ref{sec-classification}. Some open problems are stated in Section~\ref{sec-open}.

The paper is written in a mathematical level of rigor, meaning that all the definitions, conventions, theorems, corollaries, and lemmas should be understood literally. Theorems and lemmas remain true, even if cut out from the text, under the global conventions made at the beginning of each section. The proofs of lemmas use the statements but not the proofs of the other lemmas. Straightforward proofs of examples and corollaries are usually omitted because they are not used in the other proofs.

We tried our best to make the proof accessible to nonspecialists. For used known results, we tried to find references to the proofs written for nonspecialists as well, with no essential details omitted. When unable to find such ones, we preferred to include the proofs in the paper.

\section{Factorization results}
\label{sec-factorization}

In this section we prove results on factorization of quaternionic polynomials including Theorem~\ref{th-pythagorean} and Example~\ref{ex-Beauregard}.
We concentrate on the ones required for the main theorem; factorization of polynomials in one variable is studied in detail in \cite{Gentili-etal-13, Gentili-Stoppato-08}. The proofs are self-contained. We need several lemmas, in which the equation $Q\bar Q=PR$ is solved step by step for $Q$ of degree $0$ or $1$ in $v$. Examples below show that the degree bounds in the lemmas are essential. Assertions~\ref{l-preserve}--\ref{ex-first} do not pretend to be new, although we did not find them in the literature.

%\subsection*{Factorization of quaternionic polynomials}

%In this subsection we prove Lemma~\ref{th-linear-PDT}.   The key step of the proof is Lemma~\ref{l-splitting4}. The other assertions of this subsection do not pretend to be new, although we did not find them in the literature.

\begin{lem} \label{l-preserve} Transformation~\eqref{eq-linear-transformation} preserves the equation
$Q\overline{Q}=PR$, i.e., for each $T,Q\in\mathbb{H}[u,v]$ and $P,R\in\mathbb{R}[u,v]$ such that $Q\overline{Q}=PR$ we have $Q'\overline{Q}{}'=P'R'$.
\end{lem}

\begin{proof} We have
  $Q'\overline{Q}{}'=(Q-TR)(\overline{Q}-\overline{T}R)=
  PR-T\overline{Q}R-Q\overline{T}R+TR\overline{T}R=P'R'$.
\end{proof}

\begin{lem} \label{l-adding variable2} If polynomials $Q\in\mathbb{H}[u]$ and $P,R\in\mathbb{R}[u]$ satisfy $Q\bar Q=PR$, then $(P,Q,R)=(A\bar AD,ABD,B\bar BD)$ for some $A,B\in\mathbb{H}[u]$ and $D\in\mathbb{R}[u]$.
\end{lem}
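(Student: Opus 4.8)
The plan is to recast the hypothesis as a lifting problem for a multiplicative norm and then solve it by induction, peeling off one linear factor of $Q$ at a time. Write $N(Q):=Q\bar Q$. Since real polynomials are central in $\mathbb{H}[u]$, a one-line computation gives $N(Q_1Q_2)=Q_1Q_2\bar Q_2\bar Q_1=N(Q_2)Q_1\bar Q_1=N(Q_1)N(Q_2)$, so $N$ is a multiplicative map $\mathbb{H}[u]\to\mathbb{R}[u]$ into the UFD $\mathbb{R}[u]$, and the hypothesis $Q\bar Q=PR$ reads $N(Q)=PR$. The desired conclusion is equivalent to producing $A,B\in\mathbb{H}[u]$ and $D\in\mathbb{R}[u]$ with $Q=ABD$, $P=N(A)D$, $R=N(B)D$, since these force $N(Q)=N(A)N(B)D^2=PR$. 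I would use the classical structure theory of $\mathbb{H}[u]$ (cf.\ \cite{Ore, Gelfand-etal-05, Motzkin-etal, Gentili-etal-13}): it is a noncommutative left and right principal ideal domain, every polynomial of positive degree has a linear right factor $u-q$ and a linear left factor, and for a monic irreducible quadratic $f\in\mathbb{R}[u]$ one has $f\mid N(Q)$ if and only if $Q$ admits a linear right factor $u-q$ with $N(u-q)=f$; a linear left factor of norm $f$ is then obtained by noting $N(\bar Q)=\bar Q Q=Q\bar Q=N(Q)$ and conjugating.

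I would argue by induction on $\deg Q$. In the base case $\deg Q=0$ we have $Q=c\in\mathbb{H}$ and $PR=|c|^2$ is a constant, so $P,R$ are constants and one chooses $A,B$ constant and $D=\pm1$ to absorb signs (the case $c=0$ being trivial). For the inductive step I pick a linear right factor $u-q$ of $Q$, write $Q=Q'(u-q)$, and set $f:=N(u-q)=u^2-2\re(q)\,u+|q|^2$; then $f\mid N(Q)=PR$, and the idea is to distribute $f$ between $P$ and $R$ and recurse on $Q'$, whose norm is $PR/f$.

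The case analysis is the heart of the argument and is dictated by the noncommutativity. If $f$ is irreducible and $f\mid R$, the right factor $u-q$ belongs on the right, so I would apply the induction hypothesis to $Q'$ with the pair $(P,R/f)$, obtaining $Q'=\tilde A\tilde B\tilde D$, and set $A=\tilde A$, $B=\tilde B(u-q)$, $D=\tilde D$; centrality of $\tilde D$ gives $Q=ABD$ and $N(B)D=R$. If instead $f$ is irreducible and $f\mid P$, then $u-q$ ``wants'' to lie on the left, so I would not use the right factor at all: since $f\mid N(\bar Q)=N(Q)$, the polynomial $Q$ has a linear left factor $u-q'$ with $N(u-q')=f$, and writing $Q=(u-q')Q''$ I would feed $u-q'$ into $A$ and recurse with the pair $(P/f,R)$. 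Finally, if $f=(u-a)^2$ with $a\in\mathbb{R}$, then $u-a$ is central, hence simultaneously a left and a right factor of $Q$; I would place $u-a$ into $D$ when it divides both $P$ and $R$, and otherwise $(u-a)^2$ divides exactly one of $P,R$ and I would place $u-a$ into $A$ or into $B$ accordingly. In every case one checks $P=N(A)D$, $R=N(B)D$, $Q=ABD$ by direct substitution, using that $D$ and all real factors are central.

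The main obstacle is precisely the mismatch that a chosen one-sided linear factor of $Q$ may have norm dividing the ``wrong'' side of the product $PR$, while factors cannot be moved freely across the noncommutative product $ABD$. The resolution rests on the left--right symmetry of the norm, $N(Q)=N(\bar Q)$, together with the structural fact that an irreducible quadratic dividing the central norm produces a linear factor on whichever side is needed; the remaining real, and hence central, factors cause no trouble and can be shunted into $D$ or into either of $A,B$ as required. A secondary and purely routine point is the bookkeeping of real constants and signs, which is handled by allowing $D$ to carry a sign.
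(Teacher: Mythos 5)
Your proof is correct, but it takes a genuinely different route from the paper's. You peel off linear factors: you invoke the fundamental theorem of algebra for $\mathbb{H}[u]$ together with the classical correspondence between irreducible real quadratic divisors of $N(Q)=Q\bar Q$ and linear one-sided factors of $Q$ (Gordon--Motzkin/Niven theory, available in the paper's own references \cite{Ore,Motzkin-etal,Gentili-etal-13}), and you use the symmetry $N(Q)=N(\bar Q)$ to place each peeled factor into $A$, $B$, or $D$ according to whether its norm divides $P$ or $R$; your case analysis is exhaustive because $N(u-q)$ has discriminant $-4|\mathrm{Im}\,q|^2\le 0$, so it is either irreducible or a square $(u-a)^2$, never a product of distinct real linear factors. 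The paper instead runs induction on $\deg Q$ via the degree-lowering transformation~\eqref{eq-linear-transformation}: it divides the four real components of $Q$ by whichever of $P,R$ has degree at most $\deg Q$, obtaining $Q=TR+Q'$ with $\deg Q'<\deg R$, applies the inductive hypothesis to the transformed triple, and reassembles $A=A'+T\bar B$. That argument needs no structure theory of $\mathbb{H}[u]$ at all---only division with remainder in $\mathbb{R}[u]$---and, more importantly, the same transformation trick is what the paper iterates in the two-variable Lemmas~\ref{l-splitting4} and~\ref{th-linear-PDT} and in Theorem~\ref{th-pythagorean}, where your factor-peeling method breaks down: in $\mathbb{H}[u,v]$ there are irreducible polynomials with reducible norm (Example~\ref{ex-Beauregard}), so no analogue of the norm--root correspondence survives. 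So your approach buys contact with classical one-variable factorization theory, but at the cost of importing its crux as a black box (the fact that an irreducible quadratic $f\mid N(Q)$ forces a linear right factor of norm $f$ deserves at least the short Bezout-type justification available in the right-Euclidean domain $\mathbb{H}[u]$: if $\mathrm{gcrd}(Q,f)$ were constant, then $SQ+Tf=1$ would give $f\mid\bar Q$ on the left, a contradiction), whereas the paper's proof is self-contained, shorter, and scales to the setting the paper actually needs.
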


%The proof is analogous to the proof of Lemma~\ref{l-adding variable} with the following differences:
%\begin{itemize}
%\item in the  induction base we get $A_{11}=\mathrm{const}$ because the entries are real coprime, hence $\pm A$ splits.
%\item in the induction step we perform the transformation $A\mapsto \left(\begin{smallmatrix}1 & -\bar B\\ 0 & 1\end{smallmatrix}\right)A\left(\begin{smallmatrix}1 & 0\\ -B & 1\end{smallmatrix}\right)$ instead of $A\mapsto A\left(\begin{smallmatrix}1 & 0\\ -B & 1\end{smallmatrix}\right)$.
%\end{itemize}

\begin{proof} Use induction over $\deg Q$. The base is $Q=0$, that is, $\deg Q=-\infty$. Then $P$ or $R$, say, $R$ vanishes. Clearly, $A=1$, $B=0$, $D=P$ are the required polynomials.

To make induction step, assume that $Q\ne 0$ and the lemma holds for all $Q$ of smaller degree. We have $P,R\ne 0$. Thus $P$ or $R$ has degree at most $\deg Q$. Assume that $\deg R\le \deg Q$; otherwise apply the same argument to the triple $(R,\overline{Q}, P)$ instead of $(P,Q,R)$. Divide each of the $4$ components of $Q$ by $R$ with remainders in $\mathbb{R}[u]$. We get $Q=TR+Q'$ for some $T,Q'\in\mathbb{H}[u]$ and $\deg Q'<\deg R\le \deg Q$. Perform transformation~\eqref{eq-linear-transformation}. By Lemma~\ref{l-preserve} it preserves the equation $Q\bar Q=PR$ and decreases $\deg Q$. By the inductive hypothesis, $Q'=A'BD$ and $R=B\bar BD$ for some $A',B\in\mathbb{H}[u]$ and some $D\in\mathbb{R}[u]$. Thus $Q=Q'+TR=(A'+ T\bar B)BD=:ABD$ and $P=Q\bar Q/R=A\bar AD$ as required.
\end{proof}

\begin{rem} Alternatively, Lemma~\ref{l-adding variable2} can be deduced from Quaternionic Fundamental Theorem of Algebra \cite[Theorem~5.2]{Gentili-Stoppato-08}: \emph{each polynomial in $\mathbb{H}[u]$ admits a factorization into degree $1$ factors}; cf.~\cite{Eilenberg-etal-44,Gentili-etal-13, Motzkin-etal}. Here the possibility of ``interchanging factors'' is important \cite[Theorem~5.4]{Gentili-Stoppato-08}.
\end{rem}

\begin{cor} \label{conj-1-var} Polynomials $X_{1},\dots,X_6\!\in\!\mathbb{R}[u]$ satisfy~\eqref{eq-pythagorean-n} for $n\!=\!4$, if and only if for some
$A,\!B\!\in\!\mathbb{H}[u]$, $D\in\mathbb{R}[u]$ we have
%Polynomials $X_{1},\dots,X_6\in\mathbb{R}[u]$ satisfy equation~\eqref{eq-pythagorean} if and only if
%\begin{equation*}\label{eq-one-variable}
%\begin{aligned}
$X_1+iX_2+jX_3+kX_4=2ABD,\,
X_5=(|B|^2-|A|^2)D,\,
X_6=(|B|^2+|A|^2)D.
$
%\end{aligned}
%\end{equation*}
\end{cor}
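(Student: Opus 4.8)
The plan is to translate the scalar Pythagorean equation~\eqref{eq-pythagorean} into the quaternionic norm equation $Q\bar Q=PR$ and then invoke Lemma~\ref{l-adding variable2} directly. I set $Q:=X_1+iX_2+jX_3+kX_4\in\mathbb{H}[u]$, $P:=X_6-X_5$, and $R:=X_6+X_5$. Since $X_5,X_6\in\mathbb{R}[u]$, both $P$ and $R$ lie in $\mathbb{R}[u]$, and the identities $Q\bar Q=X_1^2+X_2^2+X_3^2+X_4^2$ and $PR=X_6^2-X_5^2$ show that the $6$-tuple satisfies~\eqref{eq-pythagorean} if and only if $Q\bar Q=PR$. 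This dictionary reduces everything to the lemma.

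For the ``only if'' direction I would apply Lemma~\ref{l-adding variable2} to obtain $A,B\in\mathbb{H}[u]$ and $D_0\in\mathbb{R}[u]$ with $(P,Q,R)=(A\bar A D_0,\,ABD_0,\,B\bar B D_0)$. The one elementary observation needed is that $A\bar A$ and $B\bar B$ are self-conjugate: since conjugation is an anti-automorphism of $\mathbb{H}[u]$ (the variable $u$ being central), one has $\overline{A\bar A}=\overline{\bar A}\,\bar A=A\bar A$, so every coefficient of $A\bar A$ is real and $A\bar A=|A|^2\in\mathbb{R}[u]$, and likewise $B\bar B=|B|^2$. Recovering $X_6=\tfrac12(P+R)$ and $X_5=\tfrac12(R-P)$ and then rescaling $D:=D_0/2\in\mathbb{R}[u]$ yields exactly the stated formulas, the factor $2$ in $Q=2ABD$ being absorbed by this rescaling.

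For the ``if'' direction I would simply verify the identity by a direct computation. Given the parametrization, $P=X_6-X_5=2|A|^2D$ and $R=X_6+X_5=2|B|^2D$, while $Q=2ABD$ gives $Q\bar Q=4\,ABD\,\bar D\,\bar B\,\bar A=4D^2|A|^2|B|^2$, where I use that $D$ and $|B|^2$ are real and hence central. Comparing the two expressions gives $Q\bar Q=PR$, which unwinds through the same dictionary back to~\eqref{eq-pythagorean}.

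This corollary carries essentially no independent difficulty: all the substance lies in Lemma~\ref{l-adding variable2}, whose inductive division-with-remainder argument does the real work. The only points requiring (minor) care are the reality of $A\bar A$ and $B\bar B$, the harmless factor-of-$2$ normalization of $D$, and the bookkeeping that the correspondence $Q,P,R$ is a genuine equivalence so that both directions of the ``if and only if'' follow at once.
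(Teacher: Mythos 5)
Your proposal is correct and matches the paper's (implicit) argument: the paper states this as an immediate corollary of Lemma~\ref{l-adding variable2} via the same dictionary $Q=X_1+iX_2+jX_3+kX_4$, $P=X_6-X_5$, $R=X_6+X_5$ announced in the introduction, with the same factor-of-two rescaling of $D$. No gaps; the verification of both directions is exactly the intended routine.
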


\begin{exmp} \label{ex-first}
For $A=u+i$, $B=v+j$ there are no $A',B',D\in\mathbb{H}[u,v]$ such that $(A\bar A, BA,B\bar B)=(A'\bar A{}'D,A'B'D,B'\bar B{}'D)$.
%The polynomial $AB$ is not a product of polynomials of norm squares $B\bar B$ and $A\bar A$ in the order from the left to the right, if $A=u+i$, $B=v+j$.
%%%
%The degenerate self-conjugate matrix
%$\left(\begin{smallmatrix}
%A\bar A & B \bar A\\
%A\bar B & B \bar B
%\end{smallmatrix}\right)$
%%%%%$\left(\begin{smallmatrix}
%u^2+1 & (v+j)(u+i)\\
%(u-i)(v-j) & v^2+1
%\end{smallmatrix}\right)$
%%%
%does not split for $A=u+i$, $B=v+j$.
%This matrix is the transpose of the splittable matrix $\left(\begin{smallmatrix}
%A\bar A & A\bar B\\
%B \bar A & B \bar B
%\end{smallmatrix}\right)=(A,B)\otimes(\bar A,\bar B)$.
%%But the transpose matrix $\left(\begin{smallmatrix}
%A\bar A & A\bar B\\
%B \bar A & B \bar B
%\end{smallmatrix}\right)=(A,B)\otimes(\bar A,\bar B)$ self-conjugately splits.
\end{exmp}

\begin{splitting-lemma}\label{l-splitting-basic}
If polynomials $Q\in\mathbb{H}_{11}$, $P\in\mathbb{R}_{02}$, $R\in\mathbb{R}_{20}$ satisfy $|Q(u,v)|^2=P(v)R(u)$
then $Q(u,v)=A(v)B(u)$ or $Q(u,v)=B(u)A(v)$ for some $A\in\mathbb{H}_{01}$, $B\in\mathbb{H}_{10}$.
%If $|Q(u,v)|^2=P(u)R(v)$ for some $Q\in\mathbb{H}_{11}$ and $P\in\mathbb{R}_{20}$, $R\in\mathbb{R}_{02}$ of degree~$2$ then $Q$ is reducible.
%If $|Q(u,v)|^2=P(u)R(v)$ for some $Q\in\mathbb{H}_{11}$ and $P\in\mathbb{R}_{20}$, $R\in\mathbb{R}_{02}$ then $Q$ is a product of two polynomials of norm squares $|P|$ and $|R|$ is some order.
%%%
%If the norm square of a polynomial $Q\in\mathbb{H}_{11}$ is reducible in $\mathbb{R}_{**}$ then either the polynomial itself is reducible in $\mathbb{H}_{**}$ or $Q=qR$ for some $q\in\mathbb{H}$ and $R\in\mathbb{R}_{11}$.
\end{splitting-lemma}

\begin{proof}%[Proof of Splitting Lemma~\ref{l-splitting-basic}]
%The polynomial $|Q|^2\in\mathbb{R}_{22}$ is reducible in one of the $3$ cases:
%
%Case 1: $|Q|^2$ has a  factor $P(u)v-R(u)$ linear in $v$, where $P,R\in\mathbb{R}_{**}$ are coprime and $P\ne 0$. In this case we are going to prove that $Q$ itself has the same factor. Write $Q=Q_0+iQ_1+jQ_2+kQ_3$ with $Q_0,Q_1,Q_2,Q_3\in\mathbb{R}_{11}$. For all $u\in\mathbb{R}$ such that $P(u)\ne 0$ we have $|Q(u,R(u)/P(u))|^2=0$, hence $Q_k(u,R(u)/P(u))=0$ for each $k=0,\dots,3$. By the Bezout theorem the two curves $Q_k(u,v)=0$ and $P(u)v-R(u)=0$ must have a common component. Since $P$ and $R$ are coprime it follows that $P(u)v-R(u)$ is irreducible. Thus $P(u)v-R(u)$ divides $Q_0$, \dots, $Q_3$, hence $Q$ itself. So either $Q$ is reducible or $Q=q\cdot (P(u)v-R(u))$ for some $q\in\mathbb{H}$, where $P(u)v-R(u)\in\mathbb{R}_{11}$.
%
%Case 2: $|Q|^2$ has a factor linear in $u$. This case is analogous to the previous one.
%If $\deg P=0$ then there is nothing to prove. Assume that $\deg P=1$. Take $r\in\mathbb{R}$ such that $P(r)=0$. Then $|Q(r,v)|^2=P(r)R(v)=0$. Hence $Q(r,u)=0$ and $Q$ is left divisible by $u-r$. Then $|Q|^2$ is divisible by $(u-r)^2$ and hence $\deg P>1$, a contradiction. Thus $\deg P\ne 1$. Assume further that $\deg P,\deg R=2$ and the leading coefficients equal $1$.
%
%Case 3: $|Q|^2=P(u)R(v)$ with $P\in\mathbb{R}_{20}$, $R\in\mathbb{R}_{02}$ of degree $2$. In this case
Assume that $\deg P=\deg R=2$; otherwise $Q$ does not depend on one of the variables and there is nothing to prove. Expand $Q(u,v)=:Q_0(u)+vQ_1(u)=:Q_{00}+Q_{10}u+Q_{01}v+Q_{11}uv$. %with $Q_0,Q_1\in\mathbb{H}_{10}$,
Since $\deg P=\deg R=2$, we have $Q_{11}\ne 0$. Take $q\in\mathbb{H}$ such that $Q_0(u)+qQ_1(u)$ is a constant and denote the constant by $p$. That is, set
$q:=-Q_{10}Q_{11}^{-1}$ and $p:=Q_0+qQ_1=Q_{00}-Q_{10}Q_{11}^{-1}Q_{01}$.

Consider the polynomial $|Q|^2(u,q)$ obtained by substitution of the quaternion $q$ into the \emph{real} polynomial $|Q|^2(u,v)$.
(The resulting polynomial is \emph{not} necessarily real and should not be confused with the real polynomial $|Q(u,q)|^2$ not used and not defined in the paper.) On one hand, $|Q|^2(u,q)=P(q)R(u)$ is divisible by $R(u)$ of degree $2$. On the other hand,
$|Q|^2(u,q)=q(qQ_1+Q_0)\bar Q_1+(qQ_1+Q_0)\bar Q_0=qp\bar Q_1+p\bar Q_0$ has degree at most $1$.
%because $qQ_1+Q_0=Q_{00}-Q_{01}Q_{11}^{-1}Q_{01}=:p$ is a constant. Thus $|Q|^2(u,q)=0$ identically, i.e., $qp\bar Q_1+p\bar Q_0=0$.
Thus $|Q|^2(u,q)=qp\bar Q_1+p\bar Q_0=0$ identically.

Now for $p\ne 0$ we get $Q_0=-Q_1\bar p\,\bar q\,\bar p^{-1}$, hence $Q=Q_1(u)(v-\bar p\,\bar q\,\bar p^{-1})$ as required.
For $p=0$ we get $Q_0=-qQ_1$ by the definition of $p$, hence $Q=Q_0+vQ_1=(v-q)Q_1(u)$ as required. %the required factorization.
\end{proof}

%The following lemma is obviously equivalent to Lemma~\ref{th-linear-PDT}.

\begin{lem} \label{l-splitting4}
If polynomials $Q\in\mathbb{H}_{*1}$, $P\in\mathbb{R}_{*2}$, $R\in \mathbb{R}_{20}$ satisfy $Q\bar Q=PR$, then $(P,Q,R)$ or $(R,Q,P)$ equals $(A\bar AD,ABD,B\bar BD)$ for some $A,B\in\mathbb{H}[u,v]$ and $D\in\mathbb{R}[u,v]$.
%If $Q\bar Q=PR$ for some real coprime $Q\in\mathbb{H}_{*1}$, $R\in \mathbb{R}_{20}$, $P\in\mathbb{R}_{*2}$ then $Q$ is a product of two polynomials of norm squares $\pm P$ and $\pm R$ in some order.
%%%A degenerate self-conjugate matrix $M$ with real coprime entries %such that
%$M_{12},M_{21}\in\mathbb{H}_{*1}$, %%and either $M_{11},M_{22}\in\mathbb{R}_{*1}$ or
%$M_{22}\in\mathbb{R}_{20}$, $M_{11}\in\mathbb{R}_{*2}$ self-conjugately splits up to sign and transposition.
\end{lem}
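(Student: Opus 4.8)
The plan is to solve $Q\bar{Q}=PR$ by strong induction on $\deg_u Q$, mirroring the proof of Lemma~\ref{l-adding variable2} but keeping track of the $v$-grading. Write $Q=Q_0+Q_1v$ with $Q_0,Q_1\in\mathbb{H}[u]$ and $P=P_0+P_1v+P_2v^2$ with $P_i\in\mathbb{R}[u]$; since $R\in\mathbb{R}_{20}$ does not depend on $v$, comparing the coefficients of $1,v,v^2$ in $Q\bar{Q}=PR$ yields the three scalar identities $Q_0\bar{Q_0}=P_0R$, $\;Q_0\bar{Q_1}+Q_1\bar{Q_0}=P_1R$ and $Q_1\bar{Q_1}=P_2R$. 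I would dispose directly of the cases $Q=0$, of $Q_1=0$ (where $Q\in\mathbb{H}[u]$ forces $PR\in\mathbb{R}[u]$, hence $P\in\mathbb{R}[u]$, and Lemma~\ref{l-adding variable2} applies at once), and of $\deg_u R=0$, all of which land in the first alternative.

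For the inductive step I would reduce $\deg_u Q$ using transformation~\eqref{eq-linear-transformation}. Because $R$ is real it is central, so dividing each $v$-coefficient of $Q$ by $R$ in $\mathbb{H}[u]$ gives $Q=TR+Q'$ with $T,Q'\in\mathbb{H}[u,v]$, $\deg_v Q'\le1$ and $\deg_u Q'<\deg_u R\le2$; applying~\eqref{eq-linear-transformation} preserves $Q\bar{Q}=PR$ and fixes $R$ while lowering $\deg_u Q$ to at most $1$ whenever it was $\ge2$. The point is that this reduction is reversible on \emph{both} target forms: since $A\bar{A}$ and $B\bar{B}$ are central (and $B\bar{B}=\bar{B}B$), the added term $TR$ recombines into $A\mapsto A+T\bar{B}$ for the form $(A\bar{A}D,ABD,B\bar{B}D)$ and into $B\mapsto B+\bar{A}T$ for the form $(B\bar{B}D,ABD,A\bar{A}D)$. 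Hence it suffices to treat the base case $Q\in\mathbb{H}_{11}$, where the degree count forces $P\in\mathbb{R}_{22}$ as well.

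In the base case I would apply Lemma~\ref{l-adding variable2} to the leading identity $Q_1\bar{Q_1}=P_2R$, obtaining $R=B\bar{B}D$ and $Q_1=A_1BD$ with $B,D$ in the single variable $u$. Setting $A:=A_0+A_1v$, the first alternative holds as soon as $BD$ right-divides $Q_0$ (say $Q_0=A_0BD$), for then $Q=ABD$ and $P=Q\bar{Q}/R=A\bar{A}D$ follow automatically. The \textbf{main obstacle} is precisely to show that the right factor $BD$ extracted from the top coefficient $Q_1$ survives on all of $Q$, i.e. divides $Q_0$ on the right: this is the step at which $\mathbb{H}[u,v]$ fails to be a UFD (Example~\ref{ex-Beauregard}) and at which divisibility of norms alone is insufficient. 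To control it I would exploit the low degree of $R$. A real linear factor $u-a\mid R$ forces $|Q_0(a,v)|^2=P_0(a,v)R(a)=0$, hence $(u-a)\mid Q_0$ centrally (and likewise $Q_1$), permitting a further central reduction; an irreducible quadratic factor $\rho=(u-w)\overline{(u-w)}\mid R$ makes $w$ a right root of $Q_1$, and feeding $B(w)=0$ into the cross-term identity $Q_0\bar{Q_1}+Q_1\bar{Q_0}=P_1R$ pins down the right value of $Q_0$ at $w$.

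Finally I would account for the dichotomy in the statement, which is genuine: Example~\ref{ex-first} is exactly an instance in which $BD$ does \emph{not} right-divide $Q_0$ and only the second alternative $(R,Q,P)=(A\bar{A}D,ABD,B\bar{B}D)$ holds. The mechanism is the left--right symmetry $Q\mapsto\bar{Q}$: since the norm is central, $\bar{Q}\,\overline{\bar{Q}}=\bar{Q}Q=Q\bar{Q}=PR$, so $\bar{Q}$ satisfies the same equation, and a first-alternative factorization of $\bar{Q}$ conjugates to a second-alternative factorization of $Q$. Thus, when the right value of $Q_0$ at $w$ fails to vanish I would run the identical argument on $\bar{Q}$, where the \emph{left} value vanishes instead; one of the two must occur, producing exactly the cases $(P,Q,R)$ and $(R,Q,P)$. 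The whole scheme is the announced interlacing of factorization (Lemma~\ref{l-adding variable2} applied to $Q_1$) and transformation (the central division by $R$ via~\eqref{eq-linear-transformation}), now carried out in two variables.
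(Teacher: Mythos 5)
Your skeleton is essentially the paper's: assume $R\ne 0$, divide the components of $Q$ by the central polynomial $R\in\mathbb{R}[u]$, apply transformation~\eqref{eq-linear-transformation}, observe that both target forms absorb the correction term (via $A\mapsto A+T\bar B$, respectively $B\mapsto B+\bar AT$), and dispose of the degenerate cases. The divergence is at what you yourself call the main obstacle. After the reduction the paper has $Q'\bar Q{}'=P'(v)\,R(u)$ with $Q'\in\mathbb{H}_{11}$, and at exactly this point it invokes the Splitting Lemma~1.7 of \cite{Skopenkov-Krasauskas-15}: such a $Q'$ is a product of a factor in $\mathbb{H}_{01}$ and a factor in $\mathbb{H}_{10}$ \emph{in one of the two orders}, and the two orders are precisely what produce the alternatives $(P,Q,R)$ and $(R,Q,P)$. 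Your proposal does not use this lemma, so it must prove its content by hand --- and that is the one step you leave as a bare assertion (``one of the two must occur''). That assertion \emph{is} the lemma; everything surrounding it is bookkeeping.

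To pin down the gap: in your base case with $R$ an irreducible real quadratic (normalize $R=u^2+1$), Lemma~\ref{l-adding variable2} and the identities $Q_1\bar Q_1=P_2R$, $Q_0\bar Q_0=P_0R$ give $Q_1=a(u-q)$, $Q_0=c(u-p)$ with $p,q$ pure imaginary units (the cases $Q_0=0$ or $Q_1=0$ being trivial). The first alternative is exactly $p=q$, the second exactly $aqa^{-1}=cpc^{-1}$, and what must be shown is that the cross-term identity $Q_0\bar Q_1+Q_1\bar Q_0=P_1R$ forces one of these. Your phrase ``feeding $B(w)=0$ into the cross-term identity pins down the right value of $Q_0$ at $w$'' is not an argument: evaluation of quaternionic polynomials at quaternion points is not multiplicative, so nothing can be ``fed in'' directly. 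What does work is comparing coefficients: writing $h=c^{-1}a=h_0+h_1$ (real plus imaginary part), the $u$-coefficient gives $\langle h_1,\,q-p\rangle=0$ and the $u^2$- and constant coefficients combine to $h_0\bigl(1-\langle p,q\rangle\bigr)+\langle p\times q,\,h_1\rangle=0$; a short half-angle computation in the plane spanned by $p$ and $q$ then yields $h_0(q-p)=(p+q)\times h_1$, which is equivalent to $hq=ph$, i.e.\ to the second alternative, whenever $p\ne q$. So your route can indeed be completed, and it would make the proof self-contained where the paper relies on the companion paper; but as written, the decisive noncommutative splitting step is asserted rather than proved, which is a genuine gap.
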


\begin{proof} Assume that $R\ne 0$; otherwise set $(A,B,D)=(1,0,P)$. Expand $Q(u,v)=:Q_0(u)+vQ_1(u)$ and divide each of the $4$ components of both $Q_0(u)$ and $Q_1(u)$ by $R\in\mathbb{R}[u]$ with remainders in $\mathbb{R}[u]$. We get $Q=TR+Q'$, where $T,Q'\in\mathbb{H}_{*1}$ and the degree of $Q'$ in $u$ is less than $\deg R$. Perform transformation~\eqref{eq-linear-transformation}. We get $P'\in\mathbb{R}_{02}$ because $P'R=Q'\bar Q{}'$ by Lemma~\ref{l-preserve} and the degree of $Q'$ in $u$ is less than $\deg R\le 2$. Assume that $Q'\ne0$; otherwise set $(A,B,D)=(T,1,R)$.
% $A=T,B=1,D=R$ are the required.

By Splitting Lemma~\ref{l-splitting-basic} it follows that $Q'$ is a product of two factors $A'\in\mathbb{H}_{01}$ and $B\in\mathbb{H}_{10}$ in some order. Set $D:=\frac{R(u)}{|B(u)|^2}=\frac{|A'(v)|^2}{P'(v)}\ne 0$. We get  $R=B\bar BD$, $D\in\mathbb{R}(u)\cap \mathbb{R}(v)=\mathbb{R}$, and
%Depending on the order of the factors $A'$ and $B$,
$$
Q=TR+Q'=\begin{cases}(D^{-1}A'+T\bar B)BD=:ABD, & \text{if } Q'=A'B;\\
B(A'D^{-1}+\bar BT)D=:BAD, & \text{if } Q'=BA'.\end{cases}
$$
Then $P=Q\bar Q/R=A\bar AD$. In the case when $Q=BAD$ it remains to relabel $A$ and $B$.
\end{proof}

\begin{exmp} \label{ex-ABC} For $(A,B,C)=(u+i,v+j,u+k)$ or $(u+i,uv+j,v+k)$
there are no $A',B',D\in\mathbb{H}[u,v]$ such that $\{|AC|^2,ABC,|B|^2\}=\{A'\bar A{}'D,A'B'D,B'\bar B{}'D\}$.
%The polynomial $ABC$ is not a product of polynomials of norm squares $A\bar AC\bar C$ and $B\bar B$ in any order, if $A=u+i$, $B=v+j$, $C=u+k$ or $A=u+i$, $B=uv+j$, $C=v+k$.
%%%The degenerate self-conjugate matrix
%$\left(\begin{smallmatrix}
%A\bar AC \bar C & ABC\\
%\bar C\,\bar B\,\bar A & B \bar B
%\end{smallmatrix}\right)$
%%%
%nor
%$\pm\left(\begin{smallmatrix}
%A\bar AC \bar C & \bar C\,\bar B\,\bar A\\
%ABC & B \bar B
%\end{smallmatrix}\right)$
%%%
%$\pm\left(\begin{smallmatrix}
%P_B & Q_B\\
%\bar Q_B & R_B
%\end{smallmatrix}\right)$, $\pm\left(\begin{smallmatrix}
%P_B & \bar Q_B\\
%Q_B & R_B
%\end{smallmatrix}\right)$
%%%%%
%does not split up to sign and transposition  for $A=u+i$, $B=v+j$, $C=u+k$or for $A=u+i$, $B=uv+j$, $C=v+k$.
%%splits for $A=u+i$, $B=uv+k$, $C=v+j$.
%\mscomm{!Recheck!}
\end{exmp}

\begin{lem}\label{th-linear-PDT}
%If $Q\bar Q=PR$ for some $Q\in\mathbb{H}_{*1}$ and nonconstant $P,R\in\mathbb{R}[u,v]$ then either $Q=qR$ for some $q\in\mathbb{H}$ or $Q$ is reducible in $\mathbb{H}[u,v]$.
If polynomials $Q\in\mathbb{H}_{*1}$ and $P,R\in\mathbb{R}[u,v]$ satisfy $Q\bar Q=PR$, then $(P,Q,R)$ or $(R,Q,P)$ equals $(|AC|^2D,ABCD,|B|^2D)$ for some $A,C\in\mathbb{H}[u]$, $B\in\mathbb{H}[u,v]$, $D\in\mathbb{R}[u,v]$.
\end{lem}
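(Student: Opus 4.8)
The plan is to strip off the $v$-dependence first, reduce to the configuration of Lemma~\ref{l-splitting4}, and then induct on the $u$-degree, pushing the extra quaternionic factors into $A$ and $C$.

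First I would write $Q=Q_0+vQ_1$ with $Q_0,Q_1\in\mathbb{H}[u]$ and compute $Q\bar Q=|Q_1|^2v^2+2\re(Q_0\bar Q_1)\,v+|Q_0|^2$, a polynomial of degree $\le 2$ in $v$ with coefficients in $\mathbb{R}[u]$. If $Q_1=0$, then $Q\bar Q\in\mathbb{R}[u]$, so $PR\in\mathbb{R}[u]$ forces $P,R\in\mathbb{R}[u]$ and the claim reduces to Lemma~\ref{l-adding variable2} (set $C=1$). So assume $Q_1\ne 0$. The discriminant of the above quadratic in $v$ equals $-4\,|\im(Q_0\bar Q_1)|^2\le 0$, hence over the field $\mathbb{R}(u)$ the polynomial $Q\bar Q/|Q_1|^2$ is either irreducible or a perfect square. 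In the perfect-square case $\im(Q_0\bar Q_1)=0$, so $Q_0=\lambda Q_1$ with $\lambda\in\mathbb{R}(u)$ and therefore $Q=Q'\cdot(ev+f)$ for some $Q'\in\mathbb{H}[u]$ and coprime $e,f\in\mathbb{R}[u]$; factoring out the real polynomial $ev+f$ again reduces the statement to the one-variable Lemma~\ref{l-adding variable2}. In the irreducible case the degree-$2$ factor $Q\bar Q/|Q_1|^2$ must divide one of $P,R$ in $\mathbb{R}(u)[v]$, and after swapping $P$ and $R$ this gives $\deg_v R=0$, i.e. $R\in\mathbb{R}[u]$ and $P\in\mathbb{R}_{*2}$.

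In this remaining case I would induct on $\deg_u R$. If $\deg_u R\le 2$, then $R\in\mathbb{R}_{20}$ and Lemma~\ref{l-splitting4} applies; since the slot occupied by $R$ has $v$-degree $0$, the corresponding factor and $D$ are automatically independent of $v$, so they lie in $\mathbb{H}[u]$ and $\mathbb{R}[u]$, and padding the missing factor by $1$ yields the required form. For the inductive step I would peel off one irreducible real factor $\pi$ of $R$ at a time. A linear $\pi=u-a$ necessarily divides $Q$: from $\pi\mid R\mid Q\bar Q$ we get $|Q(a,v)|^2\equiv 0$, hence $Q(a,v)=0$ and $\pi\mid Q$. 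Dividing $Q$ by the scalar $\pi$ and redistributing $\pi$ between $R$ and $P$ (using that a real polynomial commutes, so it may be absorbed into $C$ or into $D$) lowers $\deg_u R$ and closes the step via the induction hypothesis; the transformation~\eqref{eq-linear-transformation} is the bookkeeping device that keeps $Q\bar Q=PR$ intact throughout. The same argument handles a quadratic $\pi$ that divides $Q$ coefficientwise.

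The hard part is a quadratic irreducible factor $\pi=(u-z)(u-\bar z)$ of $R$ that does \emph{not} divide $Q$ coefficientwise: here the norm form becomes isotropic modulo $\pi$, so $\pi$ need not divide $Q$, and this is exactly where $\mathbb{H}[u,v]$ fails to be a UFD. I would work one variable at a time: since $\pi\mid|Q_0|^2$ and $\pi\mid|Q_1|^2$, one-variable quaternionic factorization (the UFD property of $\mathbb{H}[u]$) produces right linear factors $u-c_0\mid Q_0$ and $u-c_1\mid Q_1$ whose norms are $\pi$, and the constraint $\pi\mid\re(Q_0\bar Q_1)$ coming from the middle coefficient should force $c_0=c_1=:c$, giving a single $C=u-c\in\mathbb{H}[u]$ with $|C|^2=\pi$ that right-divides $Q$; the Splitting Lemma~\cite[Splitting Lemma~1.7]{Skopenkov-Krasauskas-15} is the tool I would use to make this separation rigorous. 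Writing $Q=\hat Q\,C$ gives $\hat Q\bar{\hat Q}=P\,(R/\pi)$ with $\deg_u(R/\pi)=\deg_u R-2$, and the induction hypothesis applied to $(\hat Q,P,R/\pi)$ returns $A_1,B_1,C_1,D_1$; then $A=A_1$, $B=B_1$, $C=C_1C$, $D=D_1$ reassemble the claimed form, because $|C_1C|^2=|C_1|^2\pi$ and the real factor $D_1$ commutes past $C$. Verifying the compatibility $c_0=c_1$, equivalently producing the common one-sided factor $C$ from the full equation rather than from $Q_0$ and $Q_1$ separately, is the step I expect to be the main obstacle.
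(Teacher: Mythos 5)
Your skeleton is essentially the paper's proof: normalize so that (after a swap) $R\in\mathbb{R}[u]$ and $\deg_v P=2$, then induct on $\deg_u R$, with Lemma~\ref{l-splitting4} as the base and one irreducible real factor of $R$ peeled off per step. Your discriminant computation $-4|\im(Q_0\bar Q_1)|^2\le 0$ is a correct (and rather clean) substitute for the paper's Bezout/common-divisor argument for this normalization, and your degree-in-$v$ bookkeeping legitimately replaces the paper's ``no common real divisor'' reduction in ruling out the bad branch of the inductive hypothesis. But the case you yourself flag as the main obstacle --- a quadratic irreducible $\pi\mid R$ with $\pi\nmid Q$ --- contains a genuine gap, and it is exactly the non-UFD crux the lemma exists to handle. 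The mechanism you propose (right factors $u-c_0\mid Q_0$, $u-c_1\mid Q_1$ of norm $\pi$, with $\pi\mid\re(Q_0\bar Q_1)$ forcing $c_0=c_1$) is false. Take $Q=(u-i)(v-j)$, so $Q_1=u-i$ and $Q_0=(u-i)(-j)=-uj+k$, with $\pi=u^2+1$: then $|Q_0|^2=|Q_1|^2=\pi$ and $Q_0\bar Q_1=-u^2j+2uk+j$ has zero real part, so all your divisibility constraints hold; yet $Q_0$ has unique right root $-i$ while $Q_1$ has unique right root $i$, so there is \emph{no} common right factor --- the common factor $u-i$ sits on the \emph{left}. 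In general the norm-$\pi$ factor of $Q$ can be one-sided on either side, and no coefficientwise computation on $Q_0$, $Q_1$ separately will produce it or decide the side; proving that such a one-sided factor exists at all is precisely the content your proposal is missing.

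The correct repair is what the paper does: apply Lemma~\ref{l-splitting4} itself to the triple $\bigl(P\,(R/\pi),\,Q,\,\pi\bigr)$, which is legitimate since $\pi\in\mathbb{R}_{20}$ and $P\,(R/\pi)\in\mathbb{R}_{*2}$; its conclusion gives $Q=Q'Q''$ or $Q=Q''Q'$ with $|Q'|^2\sim\pi$, i.e.\ exactly the left-\emph{or}-right linear factor of norm $\pi$ that your induction needs. After that your peeling $\hat Q\bar{\hat Q}=P\,(R/\pi)$ and reassembly go through, provided you add the left-factor branch your write-up omits: for a right factor you set $C=C_1C$, for a left factor $Q=C'\hat Q$ you must instead set $A=C'A_1$, using $|C'A_1C_1|^2=\pi|A_1C_1|^2$. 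So the failure is not a verification detail to be checked later: as stated the step is wrong, and its only available rescue is the invocation of Lemma~\ref{l-splitting4} (equivalently the Splitting Lemma of \cite{Skopenkov-Krasauskas-15}) that you mention in passing --- which is exactly the paper's argument, so your proof does not constitute an independent route around it.
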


\begin{proof}%[Proof of Lemma~\ref{th-linear-PDT}]
Step 1. Let us reduce the lemma to the particular case when $R\in \mathbb{R}[u]$, $P\not\in\mathbb{R}[u]$, and $Q, R$ have no common nonconstant real divisors.

Assume that $R\ne 0$; otherwise set $(A,B,C,D)=(1,0,1,P)$.
%A common irreducible real divisor $X$ of $Q$ and $R$ is removed by one of the transformations $(P,Q,R)\mapsto (P/X,Q/X,R/X)$ or $(P,Q,R)\mapsto(P,Q/X,R/X^2)$.

First assume that $Q$ and $R$  have a common real divisor $D$ irreducible in $\mathbb{R}[u,v]$. Then $PR=Q\bar Q$ is divisible by $D^2$. Hence $P$ is divisible by $D$ or $R$ is divisible by $D^2$. Replacing $(P,Q,R)$ by $(P,Q,R)/D$ or $(P,Q/D,R/D^2)$ one cancels $D$. Now if $D\in\mathbb{R}[u]$ then one applies the proof
of Lemma~\ref{th-linear-PDT} for the resulting triple
to get a factorization of $Q/D$. Multiplying an appropriate factor by $D$,
one gets the required factorization of $Q$. If $D\not\in\mathbb{R}[u]$
then the lemma follows from Lemma~\ref{l-adding variable2}. Assume further that $Q$ and $R$ have no  common nonconstant real divisors.

Now assume that $R$ has degree $1$ in $v$. Then it has a real divisor $D$ of degree $1$ in $v$ irreducible in $\mathbb{R}[u,v]$. Take any $\hat u\in\mathbb{R}$ such that $D(\hat u,v)$ has degree $1$ in $v$. The equation $D(\hat u,v)=0$ has  a real root $v(\hat u)$. Write $Q=X_1+iX_2+jX_3+kX_4$ with $X_1,X_2,X_3,X_4\in\mathbb{R}[u,v]$. We have $|Q(\hat u,v(\hat u))|^2=0$, hence $X_l(\hat u,v(\hat u))=0$ for %each
$l=1,\dots,4$. By the Bezout theorem the two curves $X_l(u,v)=0$ and $D(u,v)=0$ have a common component. Since $D$ is irreducible, it divides $X_l$. Thus $D$ is a common real divisor for $R$ and $Q$, a contradiction. Hence $R$ has even degree in $v$.

Assume that $Q$ has degree $1$ in $v$; otherwise the lemma follows from Lemma~\ref{l-adding variable2}. Then without loss of generality $R\in\mathbb{R}[u]$ and $P\not\in\mathbb{R}[u]$ because $P$ and $R$ have in fact been equitable so far.

Step 2. Let us prove the lemma in the particular case stated in Step 1 by induction over $\deg R$.

The base $\deg R\le 2$ is Lemma~\ref{l-splitting4}. Assume that $\deg R>2$. Factorize $R=R'R''$ with irreducible $R'\in\mathbb{R}[u]$ so that $\deg R'$ is $1$ or $2$. Apply Lemma~\ref{l-splitting4} to the triple $(PR'',Q,R')$. We get $R'=Q'\bar Q{}'D'$, $PR''=Q''\bar Q{}''D'$, and $Q=Q'Q''D'$ or $Q=Q''Q'D'$
%equals either $(Q''\bar Q''D',Q''Q'D',Q'\bar Q'D')$ or $(Q''\bar Q''D',Q'Q''D',Q'\bar Q'D')$
%$R'=Q'\bar Q'D'$, $PR''=Q''\bar Q''D'$, and either $Q=Q'Q''$ or $Q=Q''Q'$
for some $Q',Q''\in\mathbb{H}[u,v]$, $D'\in\mathbb{R}[u,v]$. Here $D'=\mathrm{const}$ as a common divisor of $Q$ and $R$. Assume that $D'=1$; otherwise divide $P,Q,R$ by $D'$. We have $Q''\in\mathbb{H}_{*1}$ and $Q'\in\mathbb{H}[u]$ as divisors of $Q$ and $R'$ respectively (clearly, $Q\ne 0$).

%Since $\deg R''<\deg R$,
Apply the inductive hypothesis to the triple $(P,Q'',R'')$. We get
$Q''=ABCD$, and also $P=|AC|^2D$ or $P=|B|^2D$
% or $(R'',Q'',P)$ equals $(|AC|^2D,ABCD,|B|^2D)$
for some $A,C\in\mathbb{H}[u]$, $B\in\mathbb{H}[u,v]$, $D\in\mathbb{R}[u,v]$. Here again $D=\mathrm{const}$. %as a common divisor of $Q$ and $R$.
Hence $|AC|^2D\in\mathbb{R}[u]$.
% Since $P$ has degree $2$ in $v$ it follows that $P\ne |AC|^2D$.
Thus $P=|B|^2D$ because $P\not\in\mathbb{R}[u]$. Further,
$Q=Q'Q''=(Q'A)BCD$ or $Q=Q''Q'=AB(CQ')D$. Finally,
$R=Q\bar Q/P=|(Q'A)C|^2D=|A(CQ')|^2D$,
%$R=Q\bar Q/P=|ACQ'|^2D$,
as required.
\end{proof}

\begin{cor} \label{cor-reducibility} (Cf.~\cite[Proposition~3]{Beauregard-93})
The norm square of a polynomial $Q\in\mathbb{H}_{*1}$ is reducible in $\mathbb{R}[u,v]$ if and only if $Q$ is reducible in $\mathbb{H}[u,v]$ or equals a real polynomial times a constant quaternion. %(this generalizes \cite[Proposition~3]{Beauregard-93}).
\end{cor}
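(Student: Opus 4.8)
The plan is to read off both implications directly from Lemma~\ref{th-linear-PDT}, which already carries all the difficult factorization work; the corollary is its clean qualitative shadow.

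I would first dispatch the \emph{if} direction, which needs no machinery. If $Q=Q_1Q_2$ with $Q_1,Q_2$ nonconstant, then since $|Q_2|^2=Q_2\bar Q_2\in\mathbb{R}[u,v]$ is central, one has $Q\bar Q=Q_1Q_2\bar Q_2\bar Q_1=|Q_2|^2\,Q_1\bar Q_1=|Q_1|^2|Q_2|^2$. The norm square of a nonconstant polynomial is nonconstant: fixing any monomial order, the leading term of $Q_i\bar Q_i$ is $|\mathrm{lc}(Q_i)|^2$ times the square of the leading monomial of $Q_i$, so $\deg|Q_i|^2=2\deg Q_i>0$. Hence $|Q|^2$ splits into two nonconstant real factors, i.e.\ is reducible. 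Likewise, if $Q=cf$ with $c\in\mathbb{H}$ a (nonzero) constant and $f\in\mathbb{R}[u,v]$ nonconstant, then $|Q|^2=|c|^2 f\cdot f$ is reducible.

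The substance is the \emph{only if} direction. Assume $Q\neq 0$ and $|Q|^2=PR$ with $P,R\in\mathbb{R}[u,v]$ nonconstant. Apply Lemma~\ref{th-linear-PDT}; after possibly interchanging $P$ and $R$ (harmless, as $PR=RP$), we obtain $Q=ABCD$ with $A,C\in\mathbb{H}[u]$, $B\in\mathbb{H}[u,v]$, $D\in\mathbb{R}[u,v]$, and $P=|AC|^2D$, $R=|B|^2D$ both nonconstant. I would then finish by a short case analysis, using throughout that $\mathbb{H}[u,v]$ is a domain (so that a nonzero constant quaternion times a nonconstant polynomial, or a product of two nonconstant polynomials, is again a nonconstant non-unit). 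If $D$ is nonconstant, then either $ABC$ is nonconstant, whence $Q=D\cdot(ABC)$ exhibits $Q$ as a product of two non-units, or $ABC$ is a constant $c$ and $Q=cD$ is of the excluded form $cf$. If instead $D$ is a nonzero constant, it is a unit, so reducibility of $Q$ matches that of $ABC$; nonconstancy of $P$ and $R$ now forces $AC$ and $B$ to be nonconstant, and grouping $Q=A\cdot(BCD)$ when $A$ is nonconstant, or $Q=(AB)\cdot(CD)$ when $A$ is constant (so that $C$ is nonconstant), realizes $Q$ as a product of two non-units. In every case $Q$ is reducible or equals a real polynomial times a constant quaternion.

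The main obstacle does not lie in this corollary at all: it is entirely absorbed into Lemma~\ref{th-linear-PDT}, whose factorization $Q=ABCD$ is precisely what makes the qualitative statement fall out. The only points demanding (minor) care are tracking which of the products $A$, $B$, $C$, $D$ can collapse to a constant --- resolved by the domain property --- and the remark that norm squares of nonconstant polynomials remain nonconstant. I would also note that the hypothesis $Q\in\mathbb{H}_{*1}$ is essential: Beauregard's polynomial in Example~\ref{ex-Beauregard} has degree $2$ in $v$, is irreducible and not of the form $cf$, yet has reducible norm square, so the conclusion genuinely fails outside $\mathbb{H}_{*1}$.
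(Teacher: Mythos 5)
Your proof is correct and takes exactly the route the paper intends: the paper states this result as an immediate consequence of Lemma~\ref{th-linear-PDT} without writing out a separate argument, and your derivation---applying that lemma to a nontrivial factorization $Q\bar Q=PR$ and then tracking which of $A$, $B$, $C$, $D$ can collapse to constants via the domain property of $\mathbb{H}[u,v]$---is precisely that omitted verification. The easy ``if'' direction and the remark that Example~\ref{ex-Beauregard} shows the hypothesis $Q\in\mathbb{H}_{*1}$ is essential are likewise consistent with the paper.
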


%\subsection*{Proof of main results}

%We are ready to prove Theorems~\ref{mainthm} and \ref{th-pythagorean}. The proof of Theorem~\ref{mainthm} essentially uses \cite{Skopenkov-Krasauskas-15}.

%\begin{proof}[Proof of Lemma~\ref{l-02}]
%Assume that $|Q|^2=PRS$ where $P,R,S\in\mathbb{R}_{**}$ are nonconstant and, say, $R\in\mathbb{R}_{02}$, $S\in\mathbb{R}_{20}$. \mscomm{?Division argument?}
%\end{proof}

Example~\ref{ex-Beauregard} shows that Lemma~\ref{th-linear-PDT} and Corollary~\ref{cor-reducibility} do not remain true
for~$Q\in \mathbb{H}_{*2}$.

\begin{proof}[Proof of Example~\ref{ex-Beauregard}] Let us prove that $Q$ is irreducible in $\mathbb{H}[u,v]$ (in \cite{Beauregard-93} the irreducibility in the ring of \emph{rational} quaternions is proved). Assume that $Q=AB$, where $A,B\in\mathbb{H}[u,v]$ are not constant.

First consider the case when one of these polynomials, say, $A$, does not depend on one of the variables, say, $v$. Then write $Q=(u^2-i)v^2+(2ju)v+(iu^2-1)$. Both $u^2-i$ and $2ju$ must be left-divisible by $A\ne\mathrm{const}$, which leads to a contradiction by taking $u=0$.

Since $Q\in\mathbb{H}_{22}$ it remains to consider the case when $A,B\in\mathbb{H}_{11}$. The polynomial %$$|A|^2|B|^2=|Q|^2=(u^2+\sqrt{2}u+1)(u^2-\sqrt{2}u+1)(v^2+\sqrt{2}v+1)(v^2-\sqrt{2}v+1)$$ %
$|Q|^2=|A|^2|B|^2$, and hence its divisor $|A|^2$, is a product of real polynomials in one variable. %and $\mathbb{R}[u,v]$ is a unique factorization domain it follows that $|A|^2$ is product of real polynomials in one variable. %and $|R|^2$ are reducible in $\mathbb{R}_{**}$ and do not have factors from $\mathbb{R}_{11}$.
By Splitting Lemma~\ref{l-splitting-basic} the polynomial $A$ is reducible in $\mathbb{H}[u,v]$. The left factor in a decomposition of $A\in\mathbb{H}_{11}$ is a left divisor of $Q$ and does not depend on one of the variables, which leads to the case already considered above.
In both cases we get a contradiction which proves that $Q$ is irreducible.
\end{proof}

\begin{exmp}
The irreducible polynomial $Q$ from Example~\ref{ex-Beauregard} satisfies another surprising identity:
$(u^2+1)Q=
\left(u+\frac{-k-j}{\sqrt2}\right)
\left(v+\frac{1-i}{\sqrt2}\right)
\left(u+\frac{1+k}{\sqrt2}\right)
\left(u+\frac{-1+k}{\sqrt2}\right)
\left(v+\frac{-1+i}{\sqrt2}\right)
\left(u+\frac{j-k}{\sqrt2}\right).
$
%c1 = (kk + ee)/Sqrt[2] + ee u;
%b1 = (ee - ii)/Sqrt[2] + ee v;
%a1 = ((-kk - jj)/Sqrt[2] + ee u);
%e1 = ((kk - ee)/Sqrt[2] + ee u);
%f1 = ((-ee + ii)/Sqrt[2] + ee v);
%g1 = ( (jj - kk)/Sqrt[2] + ee u);
\end{exmp}

\begin{proof}[Proof of Theorem~\ref{th-pythagorean}]
%We are going to get the required decompositions
%\begin{equation}
%Q=2ABCD, P=|AC|^2D, R=|B|^2D
%\end{equation}
%after performing a transformation of the form
%\begin{equation}\label{eq-translation}
%(R,Q,P)\mapsto (R,Q+qR,P+q\bar Q+Q\bar q+qR\bar q), \qquad q\in\mathbb{H},
%%R'=R, Q'=Q+qR, P'=P+q\bar Q+Q\bar q+qR\bar q
%\end{equation}
%possibly followed by the transformation $(R,Q,P)\mapsto (P,Q,R)$. Clearly, these transformations of $(R,Q,P)$ correspond to M\"obius transformations of $S^4$.
The `if' part is straightforward.
Let us prove the `only if' part. %Use the notation from \S\ref{s:intro}.

The initial polynomials $X_1,\dots X_6$ do not admit the desired representation~\eqref{eq-param-pythagorean} in general. By means of linear transformations of $(X_1,\dots X_6)$ we are going to reduce the theorem to a case in which they do. Recall that $Q:=X_1+iX_2+jX_3+kX_4$, $P:=X_6-X_5$, $R:=X_6+X_5$.

Assume that $P,Q,R$ have degree $2$ in each of the variables and have no common nonconstant real divisors; otherwise the theorem follows from Lemma~\ref{th-linear-PDT}.
Expand $Q(u,v)=:Q_0(u)+Q_1(u)v+Q_2(u)v^2$. Define $P_2(u)$ and $R_2(u)$ analogously. Since $Q\bar Q=PR$ it follows that  $Q_2\bar Q_2=P_2R_2$. If $\deg Q_2=2$ then $\deg P_2=\deg R_2=2$ and transformation~\eqref{eq-linear-transformation} for appropriate $T\in\mathbb{H}$ kills the leading term of $Q_2$. This is a linear transformation $\mathbb{R}^6\to \mathbb{R}^6$ preserving equation~\eqref{eq-pythagorean-n} by Lemma~\ref{l-preserve}. Thus we may assume that $\deg Q_2\le 1$. Then $\deg P_2\le 1$ or $\deg R_2\le 1$. Assume that $\deg R_2\le 1$; otherwise interchange $P$ and $R$, which is one more linear transformation.

Since $Q_2\bar Q_2=P_2R_2$ and $\deg R_2\le 1$, by Lemma~\ref{l-adding variable2} it follows that $Q_2=TR_2$ for some $T\in\mathbb{H}_{10}$.
%, not necessarily constant.
Now transformation~\eqref{eq-linear-transformation}  kills $Q_2$, but this time it does not correspond to a linear map $\mathbb{R}^6\to\mathbb{R}^6$ because $T$ may depend on $u$. We get
% Perform transformation~\eqref{eq-linear-transformation}. Degree $2$ terms cancel:
$Q'=Q-TR\in\mathbb{H}_{31}$. %: $Q_2-TR_2=0$.
By Lemma~\ref{th-linear-PDT} we have $Q'=ABC'D$, and also $R=|B|^2D$ or $R=|AC'|^2D$ for some $A,C'\in\mathbb{H}[u]$, $B\in\mathbb{H}[u,v]$, $D\in\mathbb{R}[u,v]$.

Here $D=\mathrm{const}$ as a common real divisor of $P,Q,R$. Hence $|AC'|^2D\in\mathbb{R}[u]$, thus $R=|B|^2D$ because $R\not\in\mathbb{R}[u]$.  We have $ABC'D=Q'\ne 0$; otherwise $R$ is a common divisor of $P,Q,R$. Since $ABC'D\in\mathbb{H}_{31}$
it follows that $A$ or $C'$, say, $A$ has degree at most $1$. Divide $T\in\mathbb{H}_{10}$ by $A\in\mathbb{H}_{10}$ from the left with a remainder: $T=AS+T'$, where $S\in\mathbb{H}[u]$ and $T'\in\mathbb{H}$. %because $\deg T'<\deg A=1$.
We get
$$Q=Q'+TR=ABC'D+T|B|^2D=AB(C'+\bar BS)D+T'|B|^2D=:ABCD+T'|B|^2D.$$
So transformation~\eqref{eq-linear-transformation} with $T$ replaced by $T'\in\mathbb{H}$ takes $(P,Q,R)$ to $(|AC|^2D,ABCD,|B|^2D)$.

Since $P,Q,R\in\mathbb{H}_{22}-\{0\}$, the required degree restrictions on $A,B,C,D$ in Eq.~\eqref{eq-param-pythagorean} follow.
% If $A,B,C$ or $D$ vanishes, then set $A'=T$, $C'=\bar B$, $T'=0$. If $A=\mathrm{const}\ne 0$, then set $A'=A$, $C'=C+\bar BA^{-1}T$, $T'=0$. If $B=\mathrm{const}\ne 0$, then set $A'=1$, $C'=B^{-1}ABC+\bar BT$, $T'=0$. If $C=\mathrm{const}\ne 0$, then set $A'=A+TC^{-1}\bar B$, $C'=C$, $T'=0$. If $A,B,C\ne \mathrm{const}$, $D\ne 0$, then either $A\in\mathbb{H}_{10}$ or $C\in\mathbb{H}_{10}$ because $Q'=ABCD\in\mathbb{H}_{21}$. Assume that $A\in\mathbb{H}_{10}$ without loss of generality. Divide $T\in\mathbb{H}_{10}$ by $A$ from the left with a remainder: $T=AY+T'$ for some $Y\in\mathbb{H}_{10}$ and $T'\in\mathbb{H}$. Set $A'=A$, $C'=C+\bar BY$.
\end{proof}

%\mscomm{State and a prove a lemma that the polynomials with $D\ne 1$ in Theorem~\ref{th-pythagorean} give the same set of surfaces as the ones with $D=1$.}

%\mscomm{Adding Variable Lemma stating that any matrix with $\mathbb{H}_{*1}$-entries splits is required and still missing!}

\section{Classification results}\label{sec:proofs}
\label{sec-classification}

In this section we prove the results on the classification of surfaces parametrized by quaternionic rational functions of small degree up to M\"obius transformations, in particular, Theorem~\ref{cor-3D}. We identify $\mathbb{R}^4$ with $\mathbb{H}$, $\mathbb{R}^3$ with $\mathrm{Im}\mathbb{H}$, and use basic geometric properties of quaternions; see e.g.~\cite{Lavicka-etal-07}. Throughout this section by a \emph{(hyper)surface} %By a \emph{surface with singularities}
we mean a real analytic map of a domain in $\mathbb{R}^m$ to $\mathbb{R}^n$ not necessarily having nondegenerate differential nor being injective, %By an \emph{algebraic surface} we mean the solution set of an algebraic equation
and also the image of the map, %is also called a \emph{possibly degenerate surface},
if no confusion arises. Thus a (hyper)surface can degenerate to a set of dimension less than $m$.
%The same concerns \emph{hypersurfaces}.
%A \emph{hypersurface with singularities} is defined analogously.
We prefer to perform no complexification because quaternionic polynomials do not have well-defined values at complex points. Theorem~\ref{cor-3D} is deduced from the following one.

\begin{thm}\label{prop21}%[{\color{blue} Case $(2,1)$ with a splitting denominator: make the statement formal! a stronger assertion is required1}]
\label{prop:21-split}
If surface~\eqref{eq-4DBconj} %$\Phi(u,v)=A(v)^{-1} B(u,v)C(u)^{-1}$, where $A\in\mathbb{H}_{01}$, $B\in\mathbb{H}_{11}$,  $C\in\mathbb{H}_{10}$,
is contained in $\mathbb{R}^3$ (respectively, in $S^3$) then it is a subset of a
Euclidean (respectively, Clifford) translational surface or
a Darboux cyclide (respectively, an intersection of $S^3$ with another $3$-dimensional quadric in $\mathbb{R}^4$).
\end{thm}

\begin{exmp} The surface $\Phi(u,v)=(u-i)((2j+i)v-2i-j)\left((u-i)(v-k)\right)^{-1}$ is a torus in $\mathbb{R}^3$ and
$\Phi(u,v)=(v+i)^{-1}(v+j)(u+k)(u+i)^{-1}$ is a Clifford translational surface in $S^3$.
\end{exmp}

Let us prove Theorem~\ref{prop21}.
We start with a folklore lemma, which in particular implies that almost every surface~\eqref{eq-4DBconj} contains $2$ circles $u=\mathrm{const}$ and $v=\mathrm{const}$ through almost every point.

\begin{lemma}\label{l-axial} The curve $\gamma(u)=(au+b)(cu+d)^{-1}\subset\mathbb{H}$, where $a,b,c,d\in\mathbb{H}$, $c\ne 0$, $b-ac^{-1}d\ne 0$, $dc^{-1}\not\in\mathbb{R}$, are fixed and $u$ runs through $\mathbb{R}$, is a circle with one point missing. If $\gamma(u)$ is contained in $\im\H$, then the plane of $\gamma(u)$ is orthogonal to the vector $\im\, (dc^{-1})$.
%the plane of the circle is orthogonal to the vector $\im\, (dc^{-1})$.
%If $a \in \im\H$, $b \in \H$ then the curve $\gamma(u) = au(u-b)^{-1}$ is a part of a circle in $\im\H$. The plane of the circle is orthogonal to the vector $\im(b)$.
The same is true for the curve $\gamma(u)=(cu+d)^{-1}(au+b)$, where $c\ne 0$, $b-dc^{-1}a\ne0$, $c^{-1}d\notin\mathbb{R}$, and the vector~$\mathrm{Im}(c^{-1}d)$.
\end{lemma}

\begin{proof} We have
$\gamma(u)=ac^{-1}+(b-ac^{-1}d)(cu+d)^{-1}=:f+g(u+h)^{-1}$ for some $f,g,h\in\mathbb{H}$, where $h=dc^{-1}$.
This is a composition of translations, rotation, homothety, and the unit inversion $q\mapsto \bar q^{-1}$ applied to the line $\mathbb{R}\subset \mathbb{H}$, once $b-ac^{-1}d\ne 0$. Since $h\not\in\mathbb{R}$, the inversion is applied to a line not passing through the origin. Thus $\gamma(u)$ is a circle with one point missing.

Now assume that $\gamma(u)\subset\im\H$. Then $0=\re\gamma(u)|u+h|^2=\re f|u+h|^2+\re g u+\re (g\bar h)$. Since the coefficients of the polynomial in the right-hand side vanish it follows that
$f,g\in\im\H$ and
$0=\mathrm{Re}(g\bar h)=\mathrm{Re}(g\mathrm{Re}h+\langle g,\mathrm{Im}h\rangle-g\times\mathrm{Im}h)
=\langle g,\mathrm{Im}h\rangle$, where $\langle \cdot ,\cdot\rangle$ is the scalar product in $\mathbb{R}^3$. Hence
$g$ is orthogonal to $\im h$. %Assume further that $\im h\ne 0$, otherwise $\gamma(u)$ is a line. % and there is nothing to prove.
The circle $g(u+h)^{-1}=(gu+g\mathrm{Re}h-g\times\mathrm{Im}h)|u+h|^{-2}$ is contained in the plane spanned by $g$ and $g\times \im h$. So $\gamma(u)=f+g(u+h)^{-1}$ is contained in a plane orthogonal to $\im h$ as well.
For $\gamma(u)=(cu+d)^{-1}(au+b)$ the proof is analogous.
%Indeed, this follows from the simple property: $a q \in \im\H$ iff $a \perp \im(q)$.
\end{proof}

Now consider a particular case of surface~\eqref{eq-4DBconj} given by $\Phi(u,v)=A(u,v)B(u,v)^{-1}$, % with singularities,
where
$$A=A_{11}uv+A_{10}u+A_{01}v+A_{00}\in\mathbb{H}_{11}-\{0\},\quad B=B_{11}uv+B_{10}u+B_{01}v+B_{00}\in\mathbb{H}_{11}-\{0\}.$$
%being a particular case of surface~\eqref{eq-4DBconj}.
We are going to estimate its degree.
%\begin{lem}\label{l-3fold} For each $A,B\in\mathbb{H}_{11}$ there are linear homogeneous $\tilde A,\tilde B\in\mathbb{H}[u,v,w,s]$ such that surface $AB^{-1}$ is contained in the hypersurface $\tilde A\tilde B^{-1}$.
%\end{lem}
%\begin{proof} The required linear functions are given by the formulae $A(u,v)=\tilde A(u,v,uv,1)$ and $B(u,v)=\tilde B(u,v,uv,1)$.
%\end{proof}
%First we wish to estimate the degree of the (possibly degenerate) surface $\Phi(u,v)=A(u,v)B(u,v)^{-1}.$
For that we estimate the degree of the hypersurface %(with singularities)
\begin{equation}\label{eq-hyper}
\tilde\Phi(u,v,w,s)=\tilde A(u,v,w,s)\tilde B(u,v,w,s)^{-1},
\end{equation}
%The hypersurface in $\mathbb{R}^4$
where the linear homogeneous polynomials $\tilde A,\tilde B\in\mathbb{H}[u,v,w,s]-\{0\}$ are given by the formulae
$$\tilde A=A_{11}w+A_{10}u+A_{01}v+A_{00}s, \quad \tilde B=B_{11}w+B_{10}u+B_{01}v+B_{00}s.$$
Thus $A(u,v)=\tilde A(u,v,uv,1)$ and $B(u,v)=\tilde B(u,v,uv,1)$. %, $\Phi\subset\tilde \Phi$.
The hypersurface %with singularities
has the rational parametrization $\tilde\Phi=\tilde A\overline{\tilde B}/|\tilde B|^2$ of degree at most $2$ in each variable, and by elimination of variables (not used in the paper)
it has degree at most $8$. We prove a much sharper estimate.

\begin{lem}\label{l-3folddeg4} %For each linear homogeneous $\tilde A,\tilde B\in\mathbb{H}[u,v,w,s]$
%The hypersurface $\tilde A\tilde B^{-1}$ with singularities
%$\tilde\Phi(u,v,w,s)=\tilde A(u,v,w,s)\tilde B(u,v,w,s)^{-1}$
Hypersurface~\eqref{eq-hyper} is contained in an algebraic hypersurface of degree at most $4$.
\end{lem}

\begin{proof} A point $t+ix+jy+kz\in\mathbb{H}$ is contained in the hypersurface $\tilde A\tilde B^{-1}$ %with singularities
if and only if there are
$u,v,w,s\in\mathbb{R}$ not vanishing simultaneously such that $\tilde A(u,v,w,s)-(t+ix+jy+kz)\cdot\tilde B(u,v,w,s)=0$ and $\tilde B(u,v,w,s)\ne 0$. The latter quaternionic equation can be considered as a system of $4$ real linear homogeneous equations in the variables $u,v,w,s$ with the coefficients linearly depending on the parameters $x,y,z,t$. The system has a nonzero solution if and only if the determinant of the system vanishes, which gives an algebraic equation in $x,y,z,t$ of degree at most $4$. The algebraic equation defines the required algebraic hypersurface unless the determinant vanishes identically. The latter situation is indeed possible, e.g., if $\tilde A$ and $\tilde B$ do not depend on $s$.

%It remains to consider the case when
Assume that the determinant vanishes identically. Then the system has a nonzero solution for each point $(x,y,z,t)\in\mathbb{R}^{4}$. But the set of values of the fraction $\tilde A\tilde B^{-1}$ is at most three-dimensional. One can have a nonzero solution for each point in $\mathbb{R}^{4}$ only if $\tilde A(\hat u,\hat v,\hat w,\hat s)=\tilde B(\hat u,\hat v,\hat w,\hat s)=0$ for some nonzero $(\hat u,\hat v,\hat w,\hat s)\in\mathbb{R}^{4}$. Assume that $\hat s\ne 0$ without loss of generality. Then by the linearity
\begin{align*}
\tilde A(u,v,w,s)&=\hat s\tilde A(u,v,w,s)/\hat s-s\tilde A(\hat u,\hat v,\hat w,\hat s)/\hat s=\tilde A(\hat s u-\hat us, \hat s v-\hat vs, \hat s w-\hat ws,0)/\hat s\\%\quad\text{and}\quad
\tilde B(u,v,w,s)&=\hat s\tilde B(u,v,w,s)/\hat s-s\tilde B(\hat u,\hat v,\hat w,\hat s)/\hat s=\tilde B(\hat s u-\hat us, \hat s v-\hat vs, \hat s w-\hat ws,0)/\hat s.
\end{align*}
Performing a linear change of the parameters $u,v,w,s$ we may assume that both $\tilde A(u,v,w,s)$ and $\tilde B(u,v,w,s)$ do not depend on $s$. Further denote by $\tilde A(u,v,w)$ and $\tilde B(u,v,w)$ the resulting linear homogeneous polynomials, defining the same hypersurface $\tilde A\tilde B^{-1}$ %with singularities
as the initial ones.
%(e.g., for $\hat s\ne0$ take $u'=\hat s u-\hat us$, $v'=\hat s v-\hat vs$, $w'=\hat s w-\hat ws$).

Then the above equation takes the form $\tilde A(u,v,w)-(t+ix+jy+kz)\cdot\tilde B(u,v,w)=0$. Consider the equation as a system of $4$ real linear homogeneous equations in the variables $u,v,w$. The system has a nonzero solution if and only if all the $3\times 3$ minors of the system vanish, which gives four algebraic equations in $x,y,z,t$ of degree at most $3$. If at least one of the $3\times 3$ minors does not vanish identically, then it defines
the required algebraic surface. If all the $3\times 3$ minors vanish identically then repeat the argument of the previous paragraph to get
linear homogeneous $\tilde A(u,v)$ and $\tilde B(u,v)$ depending only on $2$ variables. Again, we either get the required algebraic surface or proceed to the case when $\tilde A(u)$ and $\tilde B(u)$ depend only on $1$ variable. In the latter case any hyperplane passing through the point $\tilde A(1)\tilde B(1)^{-1}$ is the required algebraic hypersurface.
\end{proof}

We consider the case when the constructed algebraic hypersurface degenerates to $\mathrm{Im}\mathbb{H}$ separately. This is indeed possible, e.g., if $\tilde A\in \mathrm{Im}\mathbb{H}[u,v,w,s]$, $\tilde B\in\mathbb{R}[u,v,w,s]$ or $\tilde A=\tilde B\cdot d$ for some $d\in\mathrm{Im}\mathbb{H}$. Hereafter $\mathrm{Im}\mathbb{H}[u,v,w,s]$ denotes the set (not a ring) of polynomials with the coefficients in $\mathrm{Im}\mathbb{H}$.

\begin{lem}\label{l-3fold-R3} %Assume that $\tilde A,\tilde B\in\mathbb{H}[u,v,w,s]$ are linear homogeneous and
Assume that hypersurface~\eqref{eq-hyper} %$\tilde A\tilde B^{-1}$ with singularities
is contained in $\mathrm{Im}\mathbb{H}$. Then the map $\tilde A\tilde B^{-1}$ is a composition of a map of the form either $\tilde C\tilde D\tilde C^{-1}$ or $\tilde E\tilde F^{-1}$ with a M\"obius transformation of $\mathrm{Im}\mathbb{H}$,
%(not depending on $u,v,w,s$),
where $\tilde C\in\mathbb{H}[u,v,w,s]$, $\tilde E\in\mathrm{Im}\mathbb{H}[u,v,w,s]$, $\tilde F\in\mathbb{R}[u,v,w,s]$ are linear homogeneous, and $\tilde D\in\mathrm{Im}\mathbb{H}$.
\end{lem}

\begin{proof} The inverse stereographic projection
$
\im\H \to S^3, q \mapsto (1+q)(1-q)^{-1},
$
maps the hypersurface $\tilde A\tilde B^{-1}$ contained in $\im\mathbb{H}$ to the hypersurface $(\tilde A+\tilde B)(\tilde B-\tilde A)^{-1}$ contained in $S^3$. Thus $|\tilde A+\tilde B|=|\tilde B-\tilde A|$ identically. In particular, for each $(u,v,w,s)\in\mathbb{R}^4$ the condition $\tilde B(u,v,w,s)-\tilde A(u,v,w,s)=0$ implies the condition $\tilde B(u,v,w,s)+\tilde A (u,v,w,s)=0$.

Denote $\tilde B(u,v,w,s)-\tilde A(u,v,w,s)=:a_1u+a_2v+a_3w+a_4s$
and $\tilde B(u,v,w,s)+\tilde A(u,v,w,s)=:b_1u+b_2v+b_3w+b_4s$. Define a real linear map from the real linear span of $a_1,a_2,a_3,a_4\in\mathbb{H}$ into the real linear span of $b_1,b_2,b_3,b_4\in\mathbb{H}$ by the formula
$a_1u+a_2v+a_3w+a_4s\mapsto b_1u+b_2v+b_3w+b_4s$. The map is well-defined because the condition $a_1u+a_2v+a_3w+a_4s=0$ implies $b_1u+b_2v+b_3w+b_4s=0$. The map is isometric because $|\tilde B+\tilde A|=|\tilde B-\tilde A|$. Extend it to an isometry $\mathbb{H}\to\mathbb{H}$. Each isometry $\mathbb{H}\to\mathbb{H}$ has one of the forms $q\mapsto cqd$ or $q\mapsto c\bar qd$ for some $c,d\in S^3$ \cite[Theorem~3.2]{Lavicka-etal-07}. Therefore $\tilde B+\tilde A=c(\tilde B-\tilde A)d$ or $\tilde B+\tilde A=c\overline{(\tilde B-\tilde A)}d$.

In the former case set $\tilde C:=\tilde B-\tilde A$, $\tilde D:=(1+d)(1-d)^{-1}\in\mathrm{Im}\mathbb{H}$. We have $(\tilde B+\tilde A)(\tilde B-\tilde A)^{-1}=c \tilde C d \tilde C^{-1}$. Up to a M\"obius transformation this is $\tilde C d \tilde C^{-1}$, which projects stereographically to $\tilde C \tilde D \tilde C^{-1}$.

In the latter case set $\tilde E:=\mathrm{Im}(\tilde B\bar d-\tilde A\bar d)$ and $\tilde F:=\mathrm{Re}(\tilde A\bar d-\tilde B\bar d)$. We have $(\tilde B+\tilde A)(\tilde B-\tilde A)^{-1}=c \bar d (\tilde F+\tilde E)(\tilde F-\tilde E)^{-1}$. Up to a M\"obius transformation this is $(\tilde F+\tilde E)(\tilde F-\tilde E)^{-1}$, which projects stereographically to $\tilde E \tilde F^{-1}$.
\end{proof}

\begin{lem}\label{l-quadrics} The surfaces $C D C^{-1}$ and $ E F^{-1}$, %with singularities,
where $ C\in\mathbb{H}_{11}-\{0\}$, $E\in\mathrm{Im}\mathbb{H}_{11}$, $F\in\mathbb{R}_{11}-\{0\}$, $D\in\mathrm{Im}\mathbb{H}$, are contained in certain quadrics in $\mathrm{Im}\mathbb{H}$.
\end{lem}

\begin{proof} The  surface $CD C^{-1}$ is a subset of the sphere $\{q\in\mathrm{Im}\mathbb{H}:|q|=|D|\}$. The  surface $ E F^{-1}$ is a rational surface in $\mathbb{R}^3$ of degree at most $1$ in each variable, hence a subset of a quadric. Indeed, denote $E=:iX_1+jX_2+kX_3$, $F=:X_4$. The $10$ polynomials $X_mX_n$, where $1\le m\le n\le 4$, belong to $9$-dimensional space $\mathbb{R}_{22}$. Thus there is a linear dependence between them. It gives the equation of the required quadric containing the surface $EF^{-1}$.
\end{proof}

\begin{lem}\label{l-deg4} %Assume that $A,B\in\mathbb{H}_{11}$ and
If the surface $AB^{-1}$, %with singularities,
where $A,B\in\mathbb{H}_{11}-\{0\}$, is contained in $\mathrm{Im}\mathbb{H}$, then it is contained in an irreducible algebraic surface of degree at most $4$.
\end{lem}

\begin{proof} By Lemma~\ref{l-3folddeg4} the hypersurface $\tilde A\tilde B^{-1}$, and hence $AB^{-1}$, is contained in an irreducible algebraic hypersurface of degree $\le 4$. One of the components of the intersection of the algebraic hypersurface with the hyperplane $\mathrm{Im}\mathbb{H}$ is the required surface unless the algebraic hypersurface coincides with the hyperplane.
In the latter case the assumptions of Lemma~\ref{l-3fold-R3} are satisfied. By Lemmas~\ref{l-3fold-R3} and~\ref{l-quadrics} the surface $AB^{-1}$ is a M\"obius transformation of a subset of quadric, hence has degree $\le 4$.
%\mscomm{Remark. In the paper with Zube from 2011, the polynomial $F(0,x,y,z)$ in Theorem 4.1 can easily vanish identically. In that case the theorem does not give any restriction on the surface degree. This happens %, e.g., if  for each $k=0,\dots,3$ we have
%if $p_kw_k=Lw_k$ for some skew-symmetric $\mathbb{R}$-linear operator $L\colon\mathbb{H}\to\mathbb{H}$.}
\end{proof}

%The following folklore lemma is the last result we need for the proof of Theorem~\ref{prop11}.
Now the following folklore lemma allows us to prove a particular case of Theorem~\ref{prop21}. %determine the geometry of the surface $AB^{-1}$.
%The lemma is a particular case of \cite[Theorem~11]{KZ11}. % (in which one should take $g=x^2+y^2+z^2$ and $T,V$ irreducible).
%We give the proof for convenience of the reader.

\begin{lem}\label{l-cyclide} %(See \cite[Theorem~11]{Krasauskas-etal-13}.)
Assume that an irreducible algebraic surface of degree $\le 4$ in $\mathbb{R}^3$ does not contain the origin. If its inversion in the unit sphere has degree $\le 4$, then both surfaces are Darboux cyclides.
%If two irreducible algebraic surfaces of degree at most $4$ in $\mathbb{R}^3$ are symmetric with respect to the unit sphere, and one of them does not contain the origin, then they both are Darboux cyclides.
%Assume that an irreducible algebraic surface in $\mathbb{R}^3$ and its image under inversion with respect to the unit sphere have degree at most $4$ each. Then the surface is a Darboux cyclide.
\end{lem}

\begin{proof} Denote $G(x,y,z):=x^2+y^2+z^2$. Let $A(x,y,z)=0$ and $B(x,y,z)=0$ be the minimal degree equations of the initial surface and its inversion respectively, $A(0,0,0)\ne 0$. Let us prove that $B(x,y,z)=0$ is a Darboux cyclide; then the initial surface is too. We may assume that $\deg B>2$.

Since the surfaces are symmetric with respect to $S^2$, it follows that $A(\frac{x}{G(x,y,z)},\frac{y}{G(x,y,z)},\frac{z}{G(x,y,z)})=0$ is the same surface as $B(x,y,z)=0$. By the Hilbert Nullstellensatz it follows that
$\left(G^4A(\frac{x}{G},\frac{y}{G},\frac{z}{G})\right)^l$
is divisible by $B$ for some $l$.
Since $B$ is irreducible, $G^4A(\frac{x}{G},\frac{y}{G},\frac{z}{G})=BC$ for some polynomial $C$. %The polynomial $C$ is coprime with $G$ by the minimality of $k$.

Expand $A=A_0+A_1+\cdots+A_a$, $B=B_0+B_1+\cdots+B_b$, $C=C_0+C_1\cdots+C_c$, where $2<b\le 4$ and $A_l$, $B_l$, $C_l$ are homogeneous of degree $l$. We get $BC=G^4A_0+G^{3}A_1+\dots +G^{4-a}A_a$.

Comparing the highest-degree terms of the left- and the right-hand side we get $B_bC_c=G^4A_0$ because $A_0=A(0,0,0)\ne 0$. Since $G$ is irreducible, it follows that $B_b$ is a power of $G$ times a constant which we set to $1$. Since $2<b\le 4$ it follows that $b=c=4$,  $B_4=G^2$, $C_4=A_0G^2$.

Comparing the degree $7$ terms of the right- and the left-hand side we get $B_4C_{3}+B_3C_4=G^3A_1$, that is, $G^2C_{3}+G^2A_0B_3=G^3A_1$. Hence $C_{3}+A_{0} B_3$ is divisible by $G$.
Comparing the degree $6$ terms we get $B_4C_{2}+B_3C_3+B_2C_4=G^2A_2$, that is, $ G^2C_{2}+B_3C_3+A_0G^2B_2=G^2A_2$. Hence $B_3$ or $C_3$ is divisible by $G$. Therefore they both are divisible. We have proved that $B_4$ and $B_3$ are divisible by $G^2$ and $G$ respectively, i.e., $B(x,y,z)=0$ is a Darboux cyclide, set (D) in Main Theorem~\ref{mainthm}.
\end{proof}

\begin{exmp} (A.~Gaifullin, private communication) The irreducible degree $4$ surfaces given by the equations $(x^2+y^2+z^2)y^2-5x^3+z^2=0$ and
$(x^2+y^2+z^2)z^2-5x^3+y^2=0$ are symmetric with respect to the unit sphere but are not Darboux cyclides. Here both surfaces do contain the origin.
\end{exmp}

%We are ready to prove a particular case of Theorem~\ref{prop21}.

\begin{lem}\label{prop11}
If the surface  $\Phi(u,v)=A(u,v)B(u,v)^{-1}$, where $A,B\in\mathbb{H}_{11}-\{0\}$, is contained in $\mathbb{R}^3$ (respectively, in $S^3$) then it is a subset of a Darboux cyclide (respectively, an intersection of $S^3$ with another $3$-dimensional quadric in $\mathbb{R}^4$).
%If $\Phi = N_{11}D_{11}^{-1}\subset \mathrm{Im}\mathbb{H}$ then $\Phi$ is a cyclide.
\end{lem}

\begin{proof}%[Proof of Theorem~\ref{prop11}]
In $ \R^3$, translate the surface so that it does not contain the origin. The resulting surface still has form $AB^{-1}$, and the lemma follows from Lemmas~\ref{l-deg4} and~\ref{l-cyclide} applied to the pair $AB^{-1}$ and $-BA^{-1}$.
We assume that still $A\ne 0$; otherwise there is nothing to prove.

%See Section 6.2 of
%  \texttt{http://www.mif.vu.lt/\~{}rimask/old/pdf/Isotropic\_cyclides.pdf},
%    where both classical and isotropic cases are considered simultaneously. In Theorem 11 it is proved: if $\deg T \le 4$ and $inv(T)$ has a component $V, degV \le 4$ then $V$ is a cyclide. In combination with a result that $\deg(\Phi)\le 4$ and $inv(\Phi)$ has the same fractional-bilinear type (see our previous paper with Zube) the proof follows.
Now consider a surface in $S^3$. Project it stereographically to $\mathbb{R}^3$.
%\[
%S^3 \to \im\H, \quad \xi \mapsto (\xi+1)(\xi-1)^{-1},
%\]
The resulting surface has the form $(A+B)(A-B)^{-1}$. By the lemma for $\mathbb{R}^3$ it is a Darboux cyclide.  By \cite[Section~2.2]{PSS11} the initial surface is an intersection of $S^3$ with another quadric.
\end{proof}

Let us proceed to the general case of Theorem~\ref{prop21}. %We start with a folklore lemma.

%\begin{lemma}\label{l-plane} If the surface $\Phi(u,v)=(v+a)^{-1}B(u,v)(u+c)^{-1}$ with singularities, where $a,c\in\im\mathbb{H}-\{0\}$, $ac^{-1}\in\mathbb{R}$, $B\in\mathbb{H}_{11}$, is contained in $\im\mathbb{H}$ then it is a subset of a plane.
%\end{lemma}

%\begin{proof} By Lemma~\ref{l-axial} the curves $u=\mathrm{const}$ and $v=\mathrm{const}$ are circles (or points) whose planes are orthogonal to the vector $c$. %%vectors $a\parallel c$.
%%contained in $\im\mathbb{H}$.
%Thus all these circles and hence the surface are contained in one plane.
%\end{proof}

\begin{proof}[Proof of Theorem~\ref{prop21} for $\Phi\subset\mathbb{R}^3$] %Let $\Phi\subset\mathbb{R}^3$.
If $A=\mathrm{const}$ then %$\Phi=(\bar A^{-1}B)\bar C^{-1}$ with $\bar A^{-1}B,\bar C\in\mathbb{H}_{11}$. By
by Lemma~\ref{prop11} $\Phi=(\overline{A}^{-1}B)\overline{C}^{-1}$ is a subset of a Darboux cyclide. %(actually, of a quadric by \cite[Theorem~1]{NS11}).
Analogously, if $C=\mathrm{const}$ then
$\Phi=-\overline{\Phi}=(-C^{-1}\overline{B})A^{-1}$ %$\Phi(u,v)=-\overline{\Phi(u,v)}=(-\overline{C}^{-1}\overline{B(u,v)})(\overline{A(u,v)})^{-1}$
is a Darboux cyclide.

Assume further that $A,C\ne\mathrm{const}$. Since $AC\in\mathbb{H}_{11}$ it follows that either $A\in\mathbb{H}_{01}$ and $C\in\mathbb{H}_{10}$ or vice versa. Assume the former without loss of generality.

By left division of both $\overline{A}$ and $B$ by the leading coefficient of $\overline{A}$ we may achieve $\overline{A}(v)=v+a$ for some $a\in\mathbb{H}$. Analogously, assume that $\overline{C}(u)=u+c$ for some $c\in\mathbb{H}$. Performing the change of variables $v\mapsto v-\mathrm{Re}\,a$ and $u\mapsto u-\mathrm{Re}\,c$  we may achieve $a,c\in\im\H$.
If $a=0$ then $\Phi=B\cdot(uv+cv)^{-1}$ is a subset of a Darboux cyclide by Lemma~\ref{prop11}. The same holds for $c=0$.

Assume further that $a,c\ne 0$. Then we have
\begin{align*}
\Phi(u,v)-\Phi(u,0)-\Phi(0,v)+\Phi(0,0)&=
\overline{A}^{-1}(B-\overline{A}a^{-1}B(u,0)-B(0,v)c^{-1}\overline{C}
+\overline{A}\Phi(0,0)\overline{C})\overline{C}^{-1}\\
&=(v+a)^{-1}buv(u+c)^{-1}
\end{align*}
for some $b\in\mathbb{H}$ because the left-hand sides vanishes identically for $u=0$ or $v=0$.

If $b=0$ then $\Phi(u,v)=\Phi(u,0)+\Phi(0,v)-\Phi(0,0)$.
By Lemma~\ref{l-axial} the curves $\Phi(u,0)$ and $\Phi(0,v)-\Phi(0,0)$ are either circles, or lines, or points. If they both are circles then $\Phi$
is a subset of a Euclidean translational surface~(E); otherwise $\Phi$ is a subset of a quadric.
%, hence a Darboux cyclide.

Assume that $b\ne 0$. By the above $(v+a)^{-1}buv(u+c)^{-1}\in\im\H$ for each $u,v\in\mathbb{R}$. Thus $\mathrm{Re}((v+\bar a)b(u+\bar c))=0$, hence $b\in \im\H$, $b\perp a$, $b\perp c$, $a\times b\perp c$. Since $a,b,c\ne 0$ we get $a\parallel c$.
By Lemma~\ref{l-axial} the curves $u=\mathrm{const}$ and $v=\mathrm{const}$ are circles (or points) whose planes are orthogonal to the vector $a\parallel c$. %vectors $a\parallel c$.
%contained in $\im\mathbb{H}$.
Thus all these circles and hence the surface $\Phi(u,v)$ are contained in one plane.
\end{proof}

\begin{proof}[Proof of Theorem~\ref{prop21} for $\Phi \subset S^3$]
As in the previous proof, we may assume that $A\in\mathbb{H}_{01}$, $C\in\mathbb{H}_{10}$ and $A,C\ne\mathrm{const}$. Since $\Phi \subset S^3$ it follows that
$|A^{-1}BC^{-1}| = 1$ and $|B(u,v)|^2 = |A(v)|^2|C(u)|^2$.
By Splitting Lemma~\ref{l-splitting-basic} there exist $D(v)\in\mathbb{H}_{01}$ and $E(u)\in\mathbb{H}_{10}$
such that  $B(u,v)$ splits:
$B(u,v) = D(v) E(u)$ or $B(u,v) = E(u) D(v)$. Since $|A(v)|\cdot |C(u)|=|B(u,v)|=|D(v)|\cdot |E(u)|$,  without loss of generality we may assume that
 $|D| = |A|$, $|E| = |C|$.

In the case when $B=DE$ we have $\Phi = \overline{A}^{-1}DE\overline{C}^{-1}= A\,|A|^{-2}\,|D|^2\,\bar D^{-1}E\overline{C}^{-1}=(A\bar D^{-1})(E\overline{C}^{-1})$
is a product of two circles or points $A(v)\bar D(v)^{-1}$ and $E(u)\overline{C}(u)^{-1}$. They are contained in $S^3$ because $|D| = |A|$, $|E| = |C|$. Thus $\Phi$ is a subset of a Clifford translational surface in $S^3$.

In the case when $B=ED$ we have
$
\Phi = \overline{A}^{-1}ED\overline{C}^{-1} =  AE\overline{D}^{-1}\overline{C}^{-1} = ( AE)(\overline{C}
\,\overline{D})^{-1}
$
because $|D|=|A|$. Since $AE, \overline{C}\,\overline{D} \in \H_{11}$ it follows by Lemma~\ref{prop11} that $\Phi$ is contained in the intersection of $S^3$ with another $3$-dimensional quadric.
\end{proof}

\begin{lem} \label{l-clifford} Each Clifford translational surface in $S^3$
can be rotated  (i.e., transformed by an orientation-preserving isometry of $S^3$) so that the stereographic projection of the resulting surface is a Clifford translational surface in $\mathbb{R}^3$.
\end{lem}

\begin{proof}
For any $\alpha,\beta\subset\mathbb{H}$ denote $\alpha\cdot\beta:=\{\,p\cdot q:p\in\alpha,q\in\beta\,\}$. Consider a Clifford translational surface $\alpha\cdot\beta$  in $S^3$, where $\alpha$ and $\beta$ are circles in $S^3$. Take $a,b\in S^3$ such that $\bar a$ and $\bar b$ are orthogonal to the 3- or 2-dimensional real linear spans of $\alpha$ and $\beta$ respectively. Then $\alpha':=\{a\}\cdot\alpha$ and $\beta':=\beta \cdot \{b\}$ are circles in $S^2$. The rotation $q\mapsto aqb$ takes the surface $\alpha\cdot\beta$ to the surface $\alpha'\cdot\beta'$. The stereographic projection of the latter is computed directly: for each $p\in\alpha'$, $q\in\beta'$, $p+q\ne0$, we have
$$
(1+pq)(1-pq)^{-1}
=(1+pq)\overline{(1-pq)}\left((1-pq)\overline{(1-pq)}\right)^{-1}
=(pq-qp)|p+q|^{-2}=2(p\times q)|p+q|^{-2}
$$
because $p^2=q^2=-1$, once $p,q\in S^2$. We get a Clifford translational surface in $\mathbb{R}^3$.
\end{proof}

\begin{proof}[Proof of Theorem~\ref{cor-3D}]
Assume that a surface $\Psi\subset\mathbb{R}^3$ is a M\"obius transformation of surface~\eqref{eq-4DBconj}.
%$C^{-1}DE^{-1}$ for some  $C\in\mathbb{H}_{01},D\in\mathbb{H}_{11},E\in\mathbb{H}_{10}$.
Since a M\"obius transformation takes a hyperplane to either a hyperplane or a $3$-dimensional sphere it follows that the latter surface %$C^{-1}DE^{-1}$
is contained either in a hyperplane or in a $3$-dimensional sphere.

Perform a M\"obius transformation $q\mapsto aqc+b$  (%an orientation-preserving
a similarity), where $a,b,c\in\H$, $a,c\ne 0$, taking the obtained hyperplane (respectively, the $3$-dimensional sphere) to $\im\H$ (respectively, to $S^3$). It takes the surface $\bar A^{-1}B\bar C^{-1}$ to the surface  $(\bar Aa^{-1})^{-1}(B+\bar Aa^{-1}bc^{-1}\bar C) (c^{-1}\bar C)^{-1}$
again of form~\eqref{eq-4DBconj}.
% having the same form $\tilde A^{-1}\tilde B \tilde C^{-1}$, where $\tilde A:=Aa^{-1}$, $\tilde B:=B+Ba^{-1}bc^{-1}C$, $\tilde C:=c^{-1}C$.

By Theorem~\ref{prop21} the resulting surface %surface $\tilde C^{-1}\tilde D \tilde E^{-1}$ is %one of the following:
is (D), or (E), or intersection of $S^3$ with a quadric, or Clifford translational in $S^3$.
In the case next to the latter perform the stereographic projection to get (D). In the latter case perform the rotation given by Lemma~\ref{l-clifford} and then the stereographic projection to get (C).
%In the latter two cases perform the inversion with the center at the point $1\in\mathbb{H}$ projecting the surface stereographically from $S^3$ to $\im\H$. This gives either a Darboux cyclide or a Clifford translational surface.
In all the cases the resulting surface is a M\"obius transformation of $\Psi\subset\mathbb{R}^3$. %, which proves the corollary.
\end{proof}

% We conclude the section by an open problem: find a short proof that Euclidean and Clifford translational surfaces are not M\"obius transformations of each other; this is \cite[Theorem 2b]{Lubbes-15}.

%\mscomm{Remark that Euclidean translational is not a projection of Clifford translational.}

\vspace{-0.6cm}

\section{Parametrization results}\label{sec:proofs2}
\label{sec-parametrization}

In this section we prove the results on parametrization of surfaces containing two conics or circles through each point, in particular, Theorem~\ref{haupt}. The proof uses well-known methods from algebraic geometry and goes along the lines of \cite{schicho:2001}, where the case of conics is considered.
%\footnote{Section~2 of \cite{schicho:2001} starts with the phrase ``We consider complex analytic surfaces in 3-dimensional space'' and throughout the rest of the paper the higher dimensional situation has never been considered. The only higher dimensional objects there are linear normalizations of the original surface $S$ but not the surface $S$ itself. In the present paper  the argumentation of \cite{schicho:2001} is repeated to make sure that it still works in higher dimensions.}.
In comparison to \cite{schicho:2001} we have added some details %to the argument
%have slightly changed the argument
to make it accessible to nonspecialists and eliminated some unnecessary steps.
%cut some steps not required for the parametrization result.
%Assertion a) is \cite[Theorem~11]{schicho:2001}.
%Assertion b) immediately follows from the proof of Theorems 4 and 11 in \cite{schicho:2001}. But we give the proofs of assertions a) and b) for the convenience of the reader.
%% and the assumption that the two conic sections intersect at a unique point. \mscomm{Write in details!}
%Assertion c) is checked by going through the classification of all surfaces containing two conic sections through each point. Formally, assertion c) follows from~\cite[Theorems~5, 8, 10, and Proposition~1]{schicho:2001}.
All the lemmas below are known, but we could not find appropriate references. Some lemmas use notation defined in previous statements (but not the proofs). E.g., in Lemmas~\ref{l-cover}--\ref{l-final}
and \ref{l-reduction}--\ref{l-make-real}, $\Phi$ is a surface satisfying the assumptions of Theorems~\ref{schicho} and~\ref{haupt} respectively.

We use the following notions.
Let $P^n$ be the $n$-dimensional complex projective space with the homogeneous coordinates $x_1:\dots:x_{n+1}$. Throughout we use the standard topology in $P^n$ (not the Zariski one).
%, to avoid confusion from using several topologies simultaneously).
An \emph{analytic surface} in $P^n$ is the image of an injective complex analytic map from a domain in $\mathbb{C}^2$  into $P^n$ with nondegenerate differential at each point.
An \emph{algebraic subset} $X\subset P^n$ is the solution set of some system of algebraic equations. Algebraic subsets of dimension $1$ and $2$ are called \emph{projective algebraic surfaces} and \emph{algebraic curves}, respectively. A \emph{conic} is an irreducible degree $2$ algebraic curve in $P^n$. Recall that the set of all conics and pairs of lines (possibly coincident) in $P^n$ is naturally identified with an algebraic subset of $P^N$ for some large $N$ (depending on $n$). The subset is called \emph{the variety of all conics in $P^n$}. A conic \emph{analytically depending} on a point is a complex analytic map of an analytic surface in $P^n$ into the variety of all conics in $P^n$. An \emph{analytic family} of conics is a complex analytic map $v\mapsto\alpha_v$ of a domain in $\mathbb{C}$ into the variety of all conics in $P^n$ such that none of $\alpha_v$ is a pair of lines. %\mscomm{hereafter: conic section $\to$ conic ? Define the variety of all conics. It involves degenerate ones!!!}
%If no confusion arises,
The image of this map is also called a \emph{family} of conics.

Theorem~\ref{haupt} is deduced from the following one, which is essentially \cite[Theorem~11]{schicho:2001}.%(for $n=3$ this is \cite[Theorem~11]{schicho:2001}).

%The following proof of theorem~\ref{schicho} is completely analogous to \cite[Theorem~11]{schicho:2001}, where the $3$-dimensional case is considered; see also \cite[Theorem~17]{Lubbes-14}. %For convenience of the reader we give the proof in Section~\ref{sec:proofs2}.

\begin{thm}\label{schicho} %\mscomm{!!!Correction required!!!}
%Suppose for each point of a complex analytic surface in $\mathbb{P}^n$, where $n\ge 3$, there are two conic sections lying in the surface and intersecting transversally only at this point. Then the surface has a parametrization $a_{22}^{(1)}(u,v):\dots:a_{22}^{(n+1)}(u,v)$.
Assume that through each point of
an analytic surface $\Phi$ in %$\mathbb{P}^n$
a domain in $P^n$
%$(n+1)$-dimensional complex projective space
%, where $n\ge 3$,
one can draw two transversal conic sections intersecting each other only at this point (and analytically depending on the point) such that their intersections with the domain are contained in the surface.
Assume that through each point in some dense subset of the surface one can draw only finitely many conic sections such that their intersections with the domain are contained in the surface. Then
%\noindent a)
the surface is algebraic and has a parametrization (possibly besides a one-dimensional subset) \vspace{-0.1cm}
\begin{equation}\label{eq-conical}
\Phi(u,v)=X_{1}(u,v):\dots:X_{n+1}(u,v)
%%\Phi(u,v)=A_{22}^{(0)}(u,v):\dots:A_{22}^{(n)}(u,v)
\end{equation}
\vspace{-0.2cm}
for some $X_{1},\dots,X_{n+1}\in\mathbb{C}_{22}$
%$X_{0},\dots,X_{n}\in\mathbb{C}[u,v]$ of degree at most $2$ in each of the variables $u$ and $v$
such that
%\noindent b) The parametrization can be chosen so that
the conic sections are the curves $u=\mathrm{const}$ and $v=\mathrm{const}$.
%unless there are infinitely many conic sections through each point of the surface.
%
%\noindent c) The latter case is possible only if the surface has degree at most $4$.
%%, if some $X_{k}\not\in\mathbb{C}_{20}\oplus\mathbb{C}_{11}\oplus\mathbb{C}_{02}$;
%%or are contained in a $2$-dimensional family of conic sections in the surface.
%\mscomm{Rem: In dimension $>3$ not proved so far.} %%\mscomm{--- A try to correct. But a stronger assertion is required!}
\end{thm}

Conversely, one can see immediately that almost every surface~\eqref{eq-conical} contains two conic sections $u=\mathrm{const}$ and $v=\mathrm{const}$ through almost every point. Theorem~\ref{schicho} in particular implies that each surface containing two conic sections through each point is contained in an projective subspace of dimension at most $8$ and has the degree at most $8$ (by standard elimination of variables).

%Let us prove several lemmas required for the proof of Theorem~\ref{schicho}. These lemmas are independent in the sense that the proof of each one uses the statements but not the proofs of the other ones.
Let us prove Theorem~\ref{schicho}.
In what follows $\Phi$ is a surface %in a domain $\Omega\subset P^n$
satisfying the assumptions of the theorem unless otherwise indicated. Denote by $\alpha_P$ and $\beta_P$ the two conics drawn through a point~$P\in\Phi$.
%Denote by $\{\alpha_P,\beta_P\}$ the pair of conics drawn through a point~$P\in\Phi$.

\begin{lem} \label{l-cover}
\footnote{This lemma is stated and proved in a slightly stronger form than in the published version of the paper.}
Under the assumptions of Theorem~\ref{schicho}, possibly except that $\alpha_P\cap\beta_P=\{P\}$, there are
two analytic families of conics $\alpha_v$, $\beta_u$, %, where $t$ runs over a domain $\omega\subset\mathbb{C}$,
and a domain $\Omega\subset P^n$ such that $\bigcup_{v}\alpha_v\cap\Omega=
\bigcup_{u}\beta_u\cap\Omega=\Phi\cap\Omega\ne\emptyset$, and inside $\Omega$, each pair $\alpha_v$, $\beta_u$ intersects transversely at a unique point~$P(u,v)=\alpha_v\cap\beta_u\cap\Omega$. Moreover, %$\{\alpha_{P(u,v)},\beta_{P(u,v)}\}=\{\alpha_v,\beta_u\}$.
$\alpha_{P(u,v)}=\alpha_v$, $\beta_{P(u,v)}=\beta_u$.
%%%A $2$-dimensional subset of the surface $\Phi$ is covered by two analytic families of conics $\alpha_t$ and $\beta_t$, where $t$ runs over a domain $\omega\subset\mathbb{C}$, such that for each $s,t\in \omega\subset\mathbb{C}$ the conics $\alpha_t$ and $\beta_s$ intersect transversely at a unique point.
\end{lem}

\begin{proof}
Take a point $P_0\in\Phi$. Draw the two conics $\alpha_0:=\alpha_{P_0}$ and $\beta_0:=\beta_{P_0}$ in the surface through the point. Through each point $P\in\alpha_0\cap\Phi$ draw another conic $\beta_{P}$ in the surface. We get an analytic family  of conics $\beta_u$. Analogously we get an analytic family  of conics $\alpha_v$.

By the assumptions of Theorem~\ref{schicho} the conics $\alpha_0$ and $\beta_0$  intersect transversely at a unique point. By the continuity there is $\epsilon>0$ such that for $|u|,|v|<\epsilon$ the conics $\alpha_v$ and $\beta_u$ intersect transversely at a unique point $P(u,v)$, and $P(u,v)\in\Phi$. Take a sufficiently small neighborhood $\Omega$ of the point $\alpha_0\cap\beta_0$ in $P^n$. Then $\bigcup_{v}\alpha_v\cap\Omega=
\bigcup_{u}\beta_u\cap\Omega=\Phi\cap\Omega\ne\emptyset$.

It remains to show that %$\{\alpha_{P(u,v)},\beta_{P(u,v)}\}=\{\alpha_v,\beta_u\}$.
$\alpha_{P(u,v)}=\alpha_v$, $\beta_{P(u,v)}=\beta_u$.
Assume that, say, $\alpha_{P(u,0)}\ne\alpha_{0}$ for some $u$ (the conics $\alpha_v$ are equitable in the following argument). By the continuity there is $\epsilon>0$ such that $\alpha_{P(u,v)}\ne\alpha_{v}$ for each $|v|<\epsilon$ and this particular $u$. Let us show that for each such $v$ there is $\delta(v)>0$ such that $\alpha_{P(u,v)}\ne\alpha_{v}$ for each $0<|u|<\delta(v)$. Indeed, otherwise for some $|v|<\epsilon$ and each $\delta>0$ there exist $u$  such that $0<|u|<\delta$ and $\alpha_{P(u,v)}=\alpha_{v}$. By the uniqueness theorem for analytic functions it follows that $\alpha_{P(u,v)}=\alpha_{v}$
for each $u$ and this particular $v$ with $|v|<\epsilon$. This contradicts to the choice of $\epsilon$. Thus the required $\delta(v)$ exists. Consider the following $2$ cases.

Case 1: for all sufficiently small $u,v$ the conic $\alpha_{P(u,v)}$ belongs to the family $\alpha_v$.
Consider the envelope of the family $\alpha_v$.
For that purpose, introduce analytic coordinates $(x,y)$ on $\Phi\cap \Omega$ so that $\beta_0$ has the equation $x=0$, while $\alpha_v$ passes through $(0,v)$ and has an analytic equation $a_v(x,y)=0$. By definition, an \emph{envelope} is the set of points $(x,y)$ such that for some $|v|<\epsilon$ we have
$$
  a_v(x,y)=0 \qquad\text{and}\qquad
  \frac{\partial a_v}{\partial v}(x,y)=0.
$$
(An envelope may slightly depend on the choice of the equation $a_v(x,y)=0$ of $\alpha_v$.) Since the intersection point  $P(u,v)\in\alpha_{P(u,v)}\cap \alpha_v$ tends to $P(0,v)$ as $u\to 0$ for fixed $v$ and $\alpha_{P(u,v)}\ne\alpha_{v}$ for $0<|u|<\delta(v)$, it follows that the envelope passes through $P(0,v)$ \cite[\S5.9]{bruce-giblin:1984}. Since this holds for all sufficiently small $v$, the envelope contains an open subset of the conic $\beta_0$. But the conic $\beta_0$ is transversal to the family $\alpha_v$ rather than tangent to it, a contradiction:
$$
0=\frac{da_v}{dv}(0,v)=\frac{\partial a_v}{\partial v}(0,v)+
\frac{\partial a_v}{\partial y}(0,v)=0+\frac{\partial a_v}{\partial y}(0,v)\ne 0.
$$

Case 2: there are arbitrarily small $u,v$ such that the conic $\alpha_{P(u,v)}$ does not belong to the family $\alpha_v$. Then the image of the analytic map $P\mapsto \alpha_P$ in the variety of all conics in $P^n$ cannot be $1$-dimensional. Hence it is $2$-dimensional. Then there are infinitely many conics $\alpha$ through each point in an open subset of $\Phi$ such that $\alpha\cap\Omega\subset\Phi$. This contradicts to one of the assumptions of Theorem~\ref{schicho}.

Thus the families $\alpha_v$ and $\beta_u$
%, where $|s|,|t|<\epsilon$, and the domain $\Omega$
are the required.
\end{proof}

Let us summarize informally what we have achieved.
Clearly, we may assume that $v\mapsto\alpha_v\cap\beta_0$ and $u\mapsto\alpha_0\cap\beta_u$ are given by quadratic polynomials (although formally we do not assume that further). The required parametrization is then obvious: $\Phi(u,v)=\alpha_v\cap\beta_u$; the difficulty is to prove that it indeed has form~\eqref{eq-conical}. A hidden obstacle is intersection of conics in one family, like in the following example. To rule such possibility out, we need heavy machinery of algebraic geometry.

\begin{exmp} Let $\Phi(x,y)=x:y:xy:(1+x^2+y^2)$, let
$\beta_u$ and $\alpha_v$ be the images of the lines $x=u$ and $y=f(v)x+v$, where $f(v)$ is an irrational analytic function.
In an appropriate domain, the surface $\Phi$ and the conics $\beta_u$, $\alpha_v$ satisfy all the assumptions of Theorem~\ref{schicho} except the finiteness of the number of conics through each point. But the map $(u,v)\mapsto \alpha_v\cap \beta_u$ does not have form~\eqref{eq-conical}. %, and~\eqref{eq-conical} does not exist.
\end{exmp}

%Further the $2$-dimensional subset of $\Phi$ given by Lemma~\ref{l-cover} is still denoted  by $\Phi$. No confusion arises from this.
%\mscomm{! The proofs of the following lemmas are rude predfarfts copypasted from Schicho's paper !} \mscomm{Check and move further.}

\begin{lem} \label{l-analytic} If an open subset of an analytic surface in $P^n$ or $\mathbb{R}^3$ is contained in an algebraic surface, then the whole analytic surface is contained in the algebraic one.
\end{lem}

\begin{proof}
Let $X_1(u,v):\cdots:X_{n+1}(u,v)$, where $X_1,\dots,X_n$ are analytic, be a parametrization of the analytic surface and $A_k(x_1,\dots,x_{n+1})=0$, where $k=1,\dots,m$, be a system of equations defining the algebraic surface. Then the analytic function $A_k(X_1(u,v),\dots,X_{n+1}(u,v))$ vanishes on an open set, hence everywhere, by the uniqueness theorem for analytic functions.
\end{proof}

\begin{lem} \label{l-algebraic} The surface $\Phi$ is contained in an irreducible algebraic surface $\hat\Phi$. The family of conics $\alpha_v$ is contained in an irreducible algebraic curve in the variety of all conics.
\end{lem}

\begin{proof} %\mscomm{!Simplify using the argument of the proof of the next theorem!}
The conics $\beta_u$ given by Lemma~\ref{l-cover} have at most $4$ common points because they do not all coincide. Thus we may assume that they do not have common points inside the domain $\Omega$ (by Lemma~\ref{l-analytic} one can consider the part of the surface in a smaller domain, if necessary). %Similarly, we may assume that $\Phi$ is a closed subset of $\Omega$.
Then by the analyticity each point of $\Omega$ belongs to at most countable number of conics $\beta_u$.
Take a sufficiently small $\epsilon>0$ such that $\alpha_v\cap\beta_u\subset\Omega$ for each $|u|,|v|\le\epsilon$.

Consider the set $\gamma$ of all conics in $P^n$ intersecting each conic $\beta_u$, where $|u|\le\epsilon$, transversely at a unique point, which belongs to the domain $\Omega$.
Let us show that the set $\gamma$ is an open subset of an algebraic subvariety of the variety of all conics in $P^n$ (with respect to ordinary topology). Indeed, the set of all conics intersecting a fixed conic $\beta_u$ is clearly an algebraic subvariety. The set of all conics intersecting each conic $\beta_u$, where $|u|\le\epsilon$, is the intersection of infinitely many such algebraic subvarieties, and hence also an algebraic subvariety. To prove that $\gamma$ is open in the latter subvariety, take arbitrary $\hat\alpha\in\gamma$ and $|\hat u|\le\epsilon$. Then there are open subsets $\gamma_{\hat u}\ni \hat\alpha$ of the subvariety and $\delta_{\hat u}\ni \hat u$ of $\mathbb{C}$ such that each conic $\alpha\in\gamma_{\hat u}$ intersects each $\beta_u$, where $u\in\delta_{\hat u}$, transversely at a unique point, which belongs to~$\Omega$. By compactness the disc $|u|\le \epsilon$ is the union of finitely many sets of the form $\delta_{\hat u}$. Then an intersection of finitely many sets of the form $\gamma_{\hat u}$ is a neighbourhood of $\hat\alpha$ contained in $\gamma$.
%, or not intersect at all).
%%%%\mscomm{!Not true automatically, a detailed proof required!}%\mscomm{!I don't understand the last sentence of the page 5!}

Let $\hat\gamma$ be the irreducible component of the minimal algebraic subset containing $\gamma$ such that $\hat\gamma$ contains all the conics $\alpha_v$. (Since the family $\alpha_v$ is analytic, it cannot ``switch'' between irreducible components, cf.~Lemma~\ref{l-analytic}.) Let $\hat\Phi$ be the union of all the conics of $\hat\gamma$ (including the ones degenerating to lines and pairs of lines).

Let us show that the algebraic subset $\hat\Phi$ is the required irreducible algebraic surface. To estimate its dimension, take an arbitrary conic $\alpha\in\gamma$. Since each point of $\Omega$ belongs to at most countable number of conics $\beta_u$ it follows that the conic $\alpha$ has uncountably many intersection points with $\bigcup_{u}\beta_u\cap\Omega=\Phi\cap\Omega$.
Hence there is a domain $\Omega'\subset \Omega$ such that $\alpha\cap\Omega'\subset \Phi\cap \Omega'$, because $\Phi$ is analytic. Thus $\bigcup_{\alpha\in\gamma}\alpha$ can only be $2$-dimensional. Since $\gamma$ is open in $\hat\gamma$, it follows that %the whole algebraic set
$\dim\hat\Phi=\dim\bigcup_{\alpha\in\hat\gamma}\alpha=2$.
By Lemma~\ref{l-analytic} the surface $\hat\Phi$ contains $\Phi$ because $\hat\Phi$ contains the open subset $\bigcup_{v}\alpha_v\cap\Omega=\Phi\cap\Omega$ of~$\Phi$.

Let us show that $\dim\hat\gamma=1$, i.e., $\hat\gamma$ is the required algebraic curve in the variety of all conics. We have $\dim\hat\gamma\ge 1$ because $\gamma$ contains all the conics $\alpha_v$, where $|v|\le\epsilon$. Assume that $\dim\hat\gamma\ge 2$. Then there is a domain $\Omega'\subset \Omega$ such that through each point of $\Phi\cap \Omega'$
there are infinitely many conics $\alpha\in\hat\gamma$ satisfying $\alpha\cap\Omega'\subset \Phi\cap \Omega'$. This contradicts to the assumptions of Theorem~\ref{schicho}. Thus $\dim\hat\gamma=1$.
\end{proof}

\begin{rem} \label{rem-infinity} %The lemma remains true (almost with the same proof),
If we drop the assumption that the number of conics through certain points is finite in Theorem~\ref{schicho} then $\Phi$ is contained in an algebraic surface $\hat \Phi$ and $\alpha_v$ is contained in an algebraic subvariety $\hat\gamma$ of the variety of all conics in $P^n$ such that $\bigcup_{\alpha\in\hat\gamma}\alpha=\hat\Phi$. The proof is the same except that one should drop the 'moreover' part of Lemma~\ref{l-cover}, the last $4$ paragraphs of the proof of Lemma~\ref{l-cover}, and the last paragraph of the proof of Lemma~\ref{l-algebraic}.
\end{rem}

%\begin{exmp} (Anonymous referee) In the proof of Lemma~\ref{l-algebraic}, the inequality $u\le \epsilon$ cannot be replaced by $u<\epsilon$. Indeed, let $\Phi=\Omega=\{x:y:1\in P^2: |x-1|,|y-1|<1\}$ and let $\alpha_0$ and $\beta_u$ be given by $y=x^2$ and $y=u+1$ respectively. Then $\alpha_0$ intersects each $\beta_v$, where $|v|<1$, transversely at a unique point in $\Omega$. But not any conic sufficiently close to $\alpha_0$ has the same property.
%\end{exmp}

%\begin{lemma} \label{l-family} The family of conics $\alpha_t$ is contained in an irreducible algebraic curve in the variety of all conics.
%%The surface $\hat\Phi$ contains at least two algebraic families of conics. \mscomm{Need: $\alpha_t$ and $\beta_t$ are algebraic!}
%\end{lemma}

%\begin{proof} %\mscomm{Incomplete, e.g., in case when all $\alpha_t$ have a common point!}
%Let be two (maybe transcendental) pencils in  generating S.
%Let $\gamma\subset\Pi$ be the algebraic family of all conics intersecting each conic $\beta_t$ and not passing through common points of all conics $\beta_t$ (if such common points exist). \mscomm{!$\gamma$ may happen to be empty!} Clearly, each conic of the family $\gamma$ has infinitely many common points with $\hat\Phi$ and hence is contained in $\hat\Phi$. The dimension of $\gamma$ is at most $1$ because otherwise we get infinitely many conics through some point of $\Phi$. The closure of $\gamma$ in $\Pi$ is the required algebraic curve.
%\end{proof}

The following definitions are equivalent to the commonly used one for the particular situation considered in the paper.

Let $X,Y\subset P^n$ be algebraic subsets. A \emph{rational map} $f\colon X\dasharrow Y$ is a map defined on a subset of $X$ being the complement of an algebraic subset of smaller dimension, assuming values in the set $Y$, and given by polynomials in homogeneous coordinates of $P^n$. (Dashes in the notation remind that a rational map may not be defined everywhere in $X$.)
A continuous extension of $f$ to a larger subset of $X$ is still called a \emph{rational map} (it may not be given by the same polynomials).
%%\mscomm{Is this definition equivalent to the common one, i.e., can we always assume that these polynomials are \emph{the same} for all points of $\Psi_1$? Shafarevich says yes.}
If the restriction of $f$ to complements of certain algebraic subsets in $X$ and $Y$ of smaller dimension is bijective, and the inverse map is rational as well, then $f$ is called a \emph{birational map}. A rational map $X\dasharrow P^1$ is called a \emph{rational function}. A surface $\Psi$ is \emph{rational}, if there is a birational map~$P^1\times P^1\dasharrow\Psi$.

We need the following well-known result for which we could not find any direct reference. (We have found several more general results in the literature but each time the proof that the inverse map is rational, the only assertion we need, was omitted.)
\footnote{E.g., on issues with this very assertion in books by Hartshorne and Goerz--Wedhorn see:\\  http://math.stackexchange.com/questions/70293/why-does-the-definition-of-an-open-subscheme-open-immersion-of-schemes-allow-f,
http://mathoverflow.net/questions/182044/why-is-the-inverse-of-a-bijective-
rational-map-rational
%Needless to say, this is a criticism of a definition in the books, and certainly not that great, friendly mathematicians who wrote them.
}

\begin{lemma}\label{l-postulate} If the restriction of a rational map $f$ from a projective algebraic surface to $P^1\times P^1$ to the complements of some $1$-dimensional subsets is bijective, then the inverse map is rational as well, and hence birational.
\end{lemma}

\begin{proof} (S. Orevkov, private communication) %It suffices to prove the lemma for a map $f$ between subsets of $P^1\times P^1$.
Let the surface be a subset of $P^n$.
For a generic $u\in P^1$ the preimage $f^{-1}(u\times P^1)$ is the algebraic curve $\beta$ in $P^n$ (possibly with finitely many points missing) defined by the equation $\mathrm{pr}_1f(x_1,\dots,x_{n+1})=u$, where $\mathrm{pr}_1\colon P^1\times P^1\to P^1$ is the first projection. %Thus the preimage is an open dense subset of an algebraic curve $\beta$.

Restrict the map $f$ to the preimage. We get a bijective rational map between the curves $\beta$ and $u\times P^1$ with finitely many points missing. Clearly, it extends to a rational homeomorphism $\beta\to u\times P^1$. Then by the classification of algebraic curves there is a birational map $\beta\to P^1$. Identify $\beta$ and $u\times P^1$ with $P^1$. Consider the graph of the rational map $\beta\to u\times P^1$ as a subset of $P^1\times P^1$. By elimination of variables it follows that the graph is the zero set of some irreducible polynomial $P\in \mathbb{C}[u,v]$. Since the map $\beta\to u\times P^1$ is bijective it follows that the polynomial $P$ has degree $1$ in each variable and hence the inverse map $u\times P^1\to\beta$ is also rational. %\mscomm{!ref!}.

This implies that the inverse map $f^{-1}(u,v)$ is rational in the variable $v$ for fixed generic $u$. Analogously, $f^{-1}(u,v)$ is rational in $u$ for fixed generic $v$. % from a dense subset of $P^1$.
Thus $f^{-1}(u,v)$ is rational. %\mscomm{!ref!}.
%By a version of the Zariski main Theorem, see \cite[Theorem~12.83 page 355]{Gorz-Wedhorn} and a property of rational maps with integral fibers  \cite[Exercise 12.29 page 365]{Gorz-Wedhorn} \mscomm{!replace these references by Orevkov's short proof of the required result!}  the inverse of a bijective (on a dense subset) rational map is rational as well,
%%\mscomm{!give a proof using Zarisky decomposition theorem of a rational map into finite and birational!},
%and it is the required rational map $P^1\times P^1\dasharrow\hat\Phi$.
\end{proof}

We prove the next lemma by reduction to its 3-dimensional version from \cite{schicho:2001}. A direct proof exposed for nonspecialists is given in the appendix.

\begin{lemma} \label{l-rational} The surface $\hat\Phi$ is rational.
\end{lemma}

\begin{proof} If $n>3$ then consider a generic projection of $\hat\Phi$ to $P^3$.
The image still contains $2$ conics through almost every point. By \cite[Theorem~3]{schicho:2001} it is rational. The restriction of the projection to the complements to appropriate $1$-dimensional subsets is bijective, hence by Lemmas~\ref{l-algebraic} and~\ref{l-postulate} it is birational. Hence $\hat\Phi$ is rational as well.
\end{proof}

The proof of the following lemma requires much more notions (cf.~\cite[\S III.1]{Shafarevich}) and can be really hard for one without algebro-geometric background.

\begin{lem}\label{l-linear-pencil} Each of the families $\alpha_v$ and $\beta_u$ %is \emph{linear}, i.e.,
consists of level sets of some rational function~$\hat\Phi\dasharrow P^1$.
%, unless there are infinitely many conic sections through each point of the surface.
\end{lem}

\begin{proof} %%%bad! By \cite[Theorems~3 and~6]{schicho:2001} our surface is a subset of the projection of some nonsingular rational surface $\tilde\Phi$ in a projective space, and conics in $\Phi$ are precisely the projections of conics in $\tilde\Phi$ (except the ones contained in the singular locus).
By the Hironaka theorem %\mscomm{?ref?}
(or by an earlier Zariski theorem sufficient in our situation) the surface $\hat\Phi$ has a \emph{desingularization} $d:\tilde\Phi\to\hat\Phi$, i.e., a proper birational map from a smooth projective algebraic surface $\tilde\Phi$ to the surface $\hat\Phi$.
%In order to apply intersection theory, take a \emph{desingularization} $d:\tilde\Phi\to\hat\Phi$; see the first paragraph of the proof of Lemma~\ref{l-ruled}.
An \emph{effective divisor} on $\tilde\Phi$ is a formal linear combination of irreducible algebraic curves on $\tilde\Phi$ with positive integer coefficients.
%If the coefficients are positive, then the divisor is \emph{effective}.
Closure of the preimage of an algebraic hypersurface under a rational map from $\tilde\Phi$ to a projective space can be considered as an effective divisor, once the preimage is $1$-dimensional and one counts the irreducible components with their multiplicities.
The collection of preimages of all the hyperplanes under a rational map is called a %$n$-dimensional
\emph{linear family} of divisors.
In particular, a $1$-dimensional \emph{linear family} is the collection of level sets of a rational function. %(respectively, a rational map to an algebraic curve).
Two divisors are \emph{linear equivalent}, %(respectively, \emph{algebraically equivalent}),
if they are contained in a $1$-dimensional linear %(respectively, algebraic)
family.
% Likewise, a one-dimensional family of curves is \emph{linear}, if it consists of level sets of a rational function.
Each linear equivalence class is a linear family  \cite[\S III.1.5]{Shafarevich}.

The \emph{intersection} $D_1\cap D_2$ of two divisors $D_1$ and $D_2$ on $\tilde\Phi$ is the number of their intersection points counted with multiplicities (once the number of intersection points is finite).
Two divisors are \emph{numerically equivalent}, if their intersections with
each algebraic curve on $\tilde\Phi$ are equal, once the number of intersection points is finite. The \emph{degree} of a divisor is the sum of the degrees of the irreducible components counted with multiplicities. The degree of a divisor equals the intersection of the divisor with a generic hyperplane section of $\tilde \Phi$.

Let us sketch the proof that on a rational surface $\tilde\Phi$ numerical equivalence of effective divisors implies linear equivalence (this is yet another well-known fact for which we could not find a direct reference). Indeed, it suffices to prove that the intersection form on the space of linear equivalence classes of effective divisors on $\tilde\Phi$ is nondegenerate. For $P^1\times P^1$ the intersection form has matrix $\left(\begin{smallmatrix}0&1\\1&0\end{smallmatrix}\right)$ in a natural basis \cite[Example~I.9b]{Beauville}, hence is nondegenerate. By elimination of indeterminacy \cite[Theorem~II.7]{Beauville}, $\tilde\Phi$ is obtained from $P^1\times P^1$ by a sequence of \emph{blow-ups} and their inverses (see \cite[\S II.1]{Beauville} for a definition). By \cite[Proposition~II.3]{Beauville} a blow-up just adds an unisotropic orthogonal one-dimensional space to the space of linear equivalence classes of effective divisors and thus preserves the nondegeneracy of the form.

Assume to the contrary that one of the analytic families $\alpha_v$ and $\beta_u$, say, the first one, is not linear.
By the Hironaka theorem the inverse map $d^{-1}\colon \hat\Phi\dasharrow\tilde \Phi$ is defined everywhere except a finite set. Thus the pullback $d^{-1}\alpha_v$ is an analytic family of algebraic curves. We use the notation $d^{-1}\alpha_v$ for the family of (closed) algebraic curves (not to be confused with the set of preimages $d^{-1}(\alpha_v)$ being algebraic curves possibly with a finite number of points missing).
For an algebraic curve $\beta\subset\tilde\Phi$ distinct from
each ${d^{-1}\alpha_v}$
the intersection $\beta\cap {d^{-1}\alpha_v}$ continuously depends on $v$, and hence is constant. Thus each two curves of the family ${d^{-1}\alpha_v}$ are numerically, hence linearly, equivalent. Thus the family ${d^{-1}\alpha_v}$ is contained in a linear family of effective divisors in $\tilde\Phi$. Since $\alpha_v$ is nonlinear it follows that ${d^{-1}\alpha_v}$ is nonlinear, hence the ambient linear family has dimension~$\ge 2$.

The image of the latter linear family under the projection $d\colon\tilde\Phi\to\hat\Phi$ is at least $2$-dimensional (not necessarily linear) family $\gamma$ of effective divisors on $\hat\Phi$.
%The divisors of the family $\gamma$ are linear combinations of curves with positive coefficients, because they arise from a linear family (and negative numbers cannot occur as multiplicities of preimages).
All the divisors of the family $\gamma$ have the same degree because their pullbacks are numerically equivalent. (Indeed, for two divisors $D_1,D_2\in\gamma$, and a general position hyperplane $H\subset P^n$ we have $\deg D_1=H\cap D_1=d^{-1}H\cap d^{-1}D_1=d^{-1}H\cap d^{-1}D_2=H\cap D_2=\deg D_2$.) %\mscomm{!We need the map to be smooth. We need nonzero jacobian for all the multiplicities to be 1!}
Since the curves $\alpha_v$ are conics it follows that the degrees of all these divisors are $2$.
Therefore the divisors of the %at least two-dimensional
family $\gamma$ are either conics or pairs of lines or lines of multiplicity $2$.  %the desingularization  $\tilde\Phi\to\hat\Phi$ preserves the degree of a curve not contained in the singular locus \mscomm{(do we need to assume that $d$ is a linear normalization?)}, and
% \mscomm{!we apply intersection theory for nonsmooth $\Phi$! and why not a cubic minus a line???}. %Taking the projection, we get al least two-dimensional family of conics in $\Phi$.
This is possible only if there are infinitely many conic sections or lines through each point in an open subset of the surface, which contradicts to the assumptions of Theorem~\ref{schicho}. Thus both families $\alpha_v$ and $\beta_u$ must be linear.
\end{proof}

Now we use the assumption of Theorem~\ref{schicho} that $\alpha_P$ and $\beta_P$ have a unique intersection point.

\begin{lemma}\label{l-parametrization}
There is a birational map $\hat\Phi\dasharrow P^1\times P^1$
taking the families $\alpha_v$ and $\beta_u$ to the sets of curves $P^1\times v$ and $u\times P^1$.
% such that the restriction of the map to complements to some $1$-dimensional subsets is bijective.
%%%There is a birational map $P^1\times P^1\dasharrow \hat\Phi$   taking the sets of curves $P^1\times t$ and $s\times P^1$ tothe families $\alpha_t$ and $\beta_s$, respectively.
%%%%%5
% are the families of curves $u=\mathrm{const}$ and $v=\mathrm{const}$.
%%There is a birational parametrization $\Phi(u,v)= X_0(u,v):\dots :X_n(u,v)$ for some coprime $X_0,\dots,X_n\in\mathbb{C}[u,v]$ such that the conic sections in $\Phi$ are $u=\mathrm{const}$ and $v=\mathrm{const}$.
%almost every curve $u=\mathrm{const}$ and $v=\mathrm{const}$ is a conic.
\end{lemma}

\begin{proof}
Consider the pair of rational functions given by Lemma~\ref{l-linear-pencil} whose level sets are the two families  of conics $\alpha_v$ and $\beta_u$. The pair of rational functions defines a rational map $\hat\Phi\dasharrow P^1\times P^1$. Since for sufficiently small $|u|, |v|$ each pair $\alpha_v$ and $\beta_u$  has an intersection point it follows that the image of this rational map contains a neighborhood of $(0,0)\in P^1\times P^1$, and thus contains $P^1\times P^1$ besides a $1$-dimensional subset, by~\cite[Theorem~6 in \S I.5.2]{Shafarevich}. Since the intersection point is unique it follows that the point $(u,v)\in P^1\times P^1$ has exactly one preimage for sufficiently small $|u|, |v|$, and hence for almost every $u,v$. Thus  the restriction of the rational map $\hat\Phi\dasharrow P^1\times P^1$ to appropriate subsets is bijective. By Lemma~\ref{l-postulate} the map is birational.
%\mscomm{!No, a more accurate argument required! The map $P_2\dasharrow P^1\times P^1$, $x:y:z\mapsto (x:y,x:y)$ has the same properties but far from being bijective!}
\end{proof}

\begin{lemma}\label{l-final} Assume that a birational map $P^1\times P^1\dasharrow \hat\Phi$ takes the sets of curves $P^1\times v$ and $u\times P^1$ to conics or lines. Write the birational map as
$(u,v)\mapsto X_1(u,v):\dots :X_{n+1}(u,v)$ for some coprime $X_1,\dots,X_{n+1}\in\mathbb{C}[u,v]$.
%Assume that $(u,v)\mapsto X_0(u,v):\dots :X_n(u,v)$, where $X_0,\dots,X_n\in\mathbb{C}[u,v]$ are coprime, is a birational parametrization such that almost every curve $u=\mathrm{const}$ and $v=\mathrm{const}$ is a conic.
Then $X_1,\dots,X_{n+1}\in\mathbb{C}_{22}$.
\end{lemma}

\begin{proof}
%Let us prove that the constructed rational map $(u,v)\mapsto X_0(u,v):\dots :X_n(u,v)$ is the required parametrization, i.e.,  all $X_k\in\mathbb{C}_{22}$.
For a birational map between surfaces, by elimination of indeterminacy \cite[Theorem~II.7]{Beauville} there is always an algebraic curve $\sigma$ such that  outside the curve $\sigma$ the map is injective and has nondegenerate differential.
Fix a generic value of $u$. Denote $X_k(v):=X_k(u,v)$. The curve $X_1(v):\dots :X_{n+1}(v)$ is a conic or a line.
Cut it by a generic hyperplane $\lambda_1 x_1+\dots \lambda_{n+1}x_{n+1}=0$ in $P^n$. The intersection consists of at most $2$ points. By general position they are not contained in the image of $\sigma$.
Since $X_1(u,v),\dots,X_{n+1}(u,v)$ are coprime it follows that
$X_1(v),\dots,X_{n+1}(v)$ have no common roots.
Since the birational map is injective outside $\sigma$ it follows that the equation $\lambda_1 X_1(v)+\dots +\lambda_{n+1}X_{n+1}(v)=0$ has at most $2$ solutions. These solutions have multiplicity $1$ because the birational map has nondegenerate differential outside $\sigma$.
 %\mscomm{?What to do with multiple roots?}
This implies that the polynomials $X_1(v),\dots,X_{n+1}(v)$ have degree at most $2$. Analogously, $X_1(u,v),\dots,X_{n+1}(u,v)$ have degree at most $2$ in $u$, and the lemma follows.
%and $v=\mathrm{const}$ are conics, the degree of the parametrization must be equal $2$ in each variable \mscomm{(up to common factor; proved by consideration of roots of $x_0(u)$)}.
\end{proof}

\begin{proof}[Proof of Theorem~\ref{schicho}] It follows directly by Lemmas~\ref{l-cover}, \ref{l-algebraic}, \ref{l-parametrization}, %\ref{l-postulate},
\ref{l-final}.
\end{proof}

\begin{rem} Theorem~\ref{schicho} remains true (with almost the same proof) without the assumption that the number of conic sections through certain points is finite except that then one cannot conclude that the two drawn conic sections $\alpha_P$ and $\beta_P$ are necessarily the curves $u=\mathrm{const}$ and $v=\mathrm{const}$.
\end{rem}

For the proof of Theorem~\ref{haupt} we need the following lemmas and auxiliary Theorem~\ref{thm:c-make-real} presented in the second appendix. In the rest of this section $\Phi\subset S^{n}\subset\mathbb{R}^{n+1}$ is a surface satisfying the assumptions of Theorem~\ref{haupt}.

\begin{lem} \label{l-reduction} The surface $\Phi$ (possibly besides a one-dimensional subset) has parametrization~\eqref{eq-circular},
%where $A_{22}^{(k)}\in\mathbb{R}[u,v]$ satisfy the equation
where $X_{1},\dots,X_{n+2}\in\mathbb{C}_{22}$
%$X_{0},\dots,X_{n}\in\mathbb{R}[u,v]$ have degree at most $2$ in each variable $u$ and $v$ and
satisfy Eq.~\eqref{eq-pythagorean-n}, the circular arcs have form $u=\const$ and $v=\const$,
and $(u,v)$ runs through some (not open) subset of~$\mathbb{C}^2$.
\end{lem}

\begin{proof} It suffices to prove the lemma for an arbitrarily small open subset of $\Phi$; then by the analyticity the whole surface $\Phi$, possibly besides a one-dimensional subset, is going to be parametrized by the same polynomials by Lemma~\ref{l-analytic}. In what follows, each open subset of $\Phi$ we restrict to, is still denoted by $\Phi$.

First extend an appropriate part of $\Phi$ analytically to a complex analytic surface $\hat\Phi$ in a ball $\Omega\subset{P}^n$ such that $\partial\hat\Phi\subset\partial\Omega$. For this purpose, take a real analytic map $f\colon\Psi\to\Phi$ of a planar domain $\Psi\subset\R^2$, having nondegenerate differential $df$ at each point. Fix a point $O\in\Phi$. The Taylor series at $f^{-1}(O)$ provide an extension $\hat f$ of $f$ to the convergence polydisk. Since $df$ is injective at $ f^{-1}(O)$, the restriction $\hat f\colon\hat\Psi\to{P}^n$ to a smaller concentric open polydisc $\hat\Psi$ is injective and has nondegenerate differential at each point. Thus there is an open ball $\Omega$ centered at $O$ such that $\hat f(\partial\hat\Psi)\cap\Omega=\emptyset$. Take $\hat\Phi$ to be the path-connected component of the intersection $ \hat f(\hat \Psi)\cap \Omega$ containing $O$ (this construction is further referred to as the \emph{restriction of the surface to the ball $\Omega$}).
We have $\partial \hat\Phi\subset\partial\Omega$ because $\hat f(\partial\hat\Psi)\cap\Omega=\emptyset$.

Now we check that an appropriate part of $\hat\Phi$ satisfies all the assumptions of Theorem~\ref{schicho}.

The most nontrivial one is that there are at most finitely many \emph{complex} conics through a generic point of $\hat\Phi$. Since $S^{n}$ contains no real lines, %, and all the real lines in $\R\times S^{n-2}$ are parallel.
it follows that $\Phi$ is not covered by a $1$-dimensional family of real lines. %; otherwise $\Phi$ is a cylinder with a conic in the base, hence no two conics in $\Phi$ have a unique common point.
Then Theorem~\ref{thm:c-make-real} from the appendix implies that $\hat\Phi$ contains finitely many complex conics through a generic point, otherwise $\Phi$ would contain infinitely many circular arcs through a generic point.

Let us fulfill the remaining assumptions. Let $\alpha_P$ and $\beta_P$ be the circles passing through a point $P\in\Phi$.
Extend the real analytic families $\alpha_P$ and $\beta_P$
to complex analytic families $\hat\alpha_P$ and $\hat\beta_P$ of complex conics in ${P}^{n+1}$ parametrized by a point $P$ in a neighborhood of $O$ in $\hat\Phi$. Since $\hat\alpha_O\cap\hat\beta_O=O$, by the continuity it follows that $\hat\alpha_P\cap\hat\beta_P=P$ for each $P$ in a (possibly smaller) neighborhood of $O$ in $\hat\Phi$. Now if $\hat\alpha_O\cap\Omega$ is disconnected, then replace $\Omega$ by a smaller ball centered at $O$ so that it becomes connected.
Restrict $\hat\Phi$ to the new ball $\Omega$. Since an arc of the conic $\alpha_O$ is contained in $\Phi$ and $\partial\hat\Phi\subset\partial\Omega$, by the analyticity it follows that the connected set $\hat\alpha_O\cap\Omega$ is contained in $\hat\Phi$. By the analyticity, the same is true for all $P\in\hat\Phi$ in a neighborhood of $O$. Restricting $\hat\Phi$ to an appropriate smaller ball $\Omega$ once again, we fulfill all the assumptions of Theorem~\ref{schicho}.

So by Theorem~\ref{schicho} the surface $\hat\Phi$ (possibly besides a one-dimensional subset) has parametrization~\eqref{eq-circular} with $X_1,\dots,X_{n+2} \in\mathbb{C}_{22}$ such that the circular arcs have the form $u=\mathrm{const}$ and $v=\mathrm{const}$. (However, the converse is not true: most of the curves $u=\mathrm{const}$ and $v=\mathrm{const}$ are not circular arcs but parts of complex conics in $\hat \Phi$.) Since the surface $\hat \Phi$ is contained in the complex quadric %$x_k^2+\dots+x_{n}^2-x_{n+1}^2=0$
extending the sphere $S^n$, it follows that the polynomials satisfy equation~\eqref{eq-pythagorean-n}.
\end{proof}

Let us reparametrize the surface to make the domain of the map $\Phi$ and the coefficients of the polynomials $X_1,\dots,X_{n+2}$ real. This step is much easier than in the proof of Theorem~\ref{thm:c-make-real} because we know that the conics on $\Phi$ have the form $u=\const$ and $v=\const$.

\begin{lem} \label{l-make-real} The surface $\Phi$ (possibly besides a one-dimensional subset) has parametrization~\eqref{eq-circular},
%where $A_{22}^{(k)}\in\mathbb{R}[u,v]$ satisfy the equation
where $X_{1},\dots,X_{n+2}\in\mathbb{C}_{22}$
%$X_{0},\dots,X_{n}\in\mathbb{R}[u,v]$ have degree at most $2$ in each variable $u$ and $v$ and
satisfy Eq.~\eqref{eq-pythagorean-n},
and $(u,v)$ runs through certain open subset of~$\mathbb{R}^2$.
\end{lem}

\begin{proof} Start with the parametrization given by Lemma~\ref{l-reduction}.
Take a point on the surface $\Phi$, which is a regular value of the parametrization map (it exists e.g. by the Sard theorem). Draw two circular arcs of the form $u=\const$ and $v=\const$ through the point. Through another pair of points of the first arc, draw two more circular arcs of the form $v=\const$. Perform a complex fractional-linear transformation of the parameter $v$ so that the second, the third, and the fourth arcs obtain the form $v=0,\pm1$, respectively (we restrict to the part of the surface where the denominator of the transformation does not vanish). Perform an analogous transformation of the parameter $u$ so that $u=0,\pm1$ become circular arcs intersecting the arc $v=0$. After performing the transformations and clearing denominators we get parametrization~\eqref{eq-circular}, where still $X_1,\dots,X_{n+2}\in\mathbb{C}_{22}$ and $(u,v)$ runs through a subset $\Psi\subset\mathbb{C}^2$.

But now all (but one) real points of the arc $u=0$ have real $v$-parameters, and all (but one) real points of the arc $v=0$ have real $u$-parameters. Indeed, in a quadratically parametrized conic, the cross-ratio of any four points equals the cross-ratio of their parameters. Since three (real) points $v=0,\pm 1$ of the arc $u=0$ have real $v$-parameters, it follows that all (but one) points of the arc have real $v$-parameters. The same for the arc $v=0$.

Let us prove that actually $\Psi\subset\R^2$.
Since $\Phi$ is a real analytic surface and $\Phi(0,0)$ is a regular value of the parametrization map, by the inverse function theorem it follows that a neighborhood of $(0,0)$ in $\Psi$ is a real analytic surface as well, injectively mapped to $\Phi$. In what follows the neighborhood is still denoted by $\Psi$. Take $(\hat u,\hat v)\in\Psi$ sufficiently close to $(0,0)$; it may not be real a priori. Then $\Phi(\hat u,\hat v)$ is a point on the surface $\Phi$ sufficiently close to $\Phi(0,0)$. Draw the two circular arcs $u=\hat u$ and $v=\hat v$ through the point $\Phi(\hat u,\hat v)$. Notice that the $v$-parameter along the arc $u=\hat u$ runs through a curve in $\mathbb{C}$, not necessarily an interval in $\R$ a priori. By the continuity it follows that the arc $v=\hat v$ intersects the arc $u=0$ at a real point in the image of $\Psi$. By the injectivity of the map $\Psi\to\Phi$, the intersection point can only be $\Phi(0,\hat v)$. In particular, we get $(0, \hat v)\in\Psi$. Since all (but one) points on the arc $u=0$ have real $v$-parameters, it follows that $\hat v\in\R$. Analogously, $\hat u\in\R$. Thus all $(\hat u,\hat v)\in\Psi$ sufficiently close to the origin are real. Since $\Psi$ is real analytic, by Lemma~\ref{l-analytic} $\Psi\subset\R^2$.
\end{proof}

\begin{lem} \label{l-real} Let $X_1,\dots,X_n\in \mathbb{C}[u,v]$. Assume that for all the points $(u,v)$ from some open subset of $\mathbb{R}^2$ the point $X_1(u,v):\dots:X_n(u,v)$ is real. Then  $X_1=X_1'Y,
\dots,X_n=X_n'Y$ for some real $X_1',\dots,X_n'\in \mathbb{R}[u,v]$ and complex $Y\in\mathbb{C}[u,v]$.			
\end{lem}

\begin{proof}
We may assume that $X_1,\dots,X_n$ do not vanish identically (indeed, if some $X_l=0$ then set $X'_l=0$ and drop $X_l$).
Take generic real $u,v$ from the open set in question. By the assumption of the lemma $X_1(u,v):\dots:X_n(u,v)$ is real. Hence $X_1(u,v)/X_l(u,v)$ is real for each $2\le l\le n$, therefore $X_1(u,v)/X_l(u,v)=\bar X_1(u,v)/\bar X_l(u,v)$. Since this holds for generic real $u,v$ it follows that $X_1/X_l=\bar X_1/\bar X_l$ as rational functions.
Take decompositions
\begin{align*}
X_1&=\lambda_1 Y_{1}^{p_{11}}\bar Y_{1}^{q_{11}}Z_{1}^{r_{11}}\dots Y_{m}^{p_{1m}}\bar Y_{m}^{q_{1m}}Z_{m}^{r_{1m}},\\
& \dots\\
X_n&=\lambda_n Y_{1}^{p_{n1}}\bar Y_{1}^{q_{n1}}Z_{1}^{r_{n1}}\dots Y_{m}^{p_{nm}}\bar Y_{m}^{q_{nm}}Z_{m}^{r_{nm}},
\end{align*}
  into coprime reduced irreducible factors $Y_{1},\dots,Y_{m}\in\mathbb{C}[u,v]-\mathbb{R}[u,v]$, $Z_{1},\dots,Z_{m}\in\mathbb{R}[u,v]$ with the powers $p_{ij},q_{ij},r_{ij}\ge 0$,  and constant factors $\lambda_1,\dots,\lambda_n\in\mathbb{C}-\{0\}$.

Since $\mathbb{C}[u,v]$ is a unique factorization domain, the relation $X_1/X_l=\bar X_1/\bar X_l$ implies that $\lambda_l\bar\lambda_1\in\mathbb{R}$ and $p_{1k}-q_{1k}=\dots=p_{nk}-q_{nk}$ for each $k=1,\dots,m$. Without loss of generality assume that $p_{1k}-q_{1k}\ge 0$ for each $k=1,\dots,m$ (otherwise replace $Y_k$ by $\bar Y_k$ and vice versa). It remains to set
\begin{align*}
 X'_l&:=\lambda_l\bar\lambda_1\left(Y_{1}\bar Y_{1}\right)^{q_{l1}}Z_{1}^{r_{l1}}\dots \relax \left(Y_{m}\bar Y_{m}\right)^{q_{lm}}Z_{m}^{r_{lm}};\\
 Y&:=\bar\lambda_1^{-1}Y_{1}^{p_{11}-q_{11}}\dots Y_{m}^{p_{1m}-q_{1m}}.
\\[-1.4cm]
\end{align*}
\end{proof}

\begin{proof}[Proof of Theorem~\ref{haupt}]
This follows from Lemmas~\ref{l-reduction}--\ref{l-real}.
%Now consider a surface $\Phi$ in $\mathbb{R}^{n-1}$. Project it stereographically to $S^{n-1}$. The resulting surface has a parametrization as granted by the theorem for $S^{n-1}$. Thus the initial surface has the parametrization %$\Phi=X_{0}-X_{n}:X_{1}:\dots:X_{n-1}$ with %$X_{1}^2+\dots+X_{n-1}^2=(X_{0}-X_{n})(X_{0}+X_{n})$, as required.
%%$\Phi=A_{22}^{(0)}-A_{22}^{(n)}:A_{22}^{(1)}:\dots:A_{22}^{(n-1)}$ with  $(A_{22}^{(1)})^2+\dots+(A_{22}^{(n-1)})^2=(A_{22}^{(0)}-A_{22}^{(n)})(A_{22}^{(0)}+A_{22}^{(n)})$.
\end{proof}

The following folklore result is the last one required for the proof of main theorem.

\begin{lem} \label{l-infinite} Assume that through each point of an analytic surface in $\R^3$ or $S^3$ one can draw infinitely many (not nested) circular arcs fully contained in the surface.
Assume that among the infinite number, one can choose two arcs analytically depending on the point. Then the surface is a subset of a sphere or a plane.
\end{lem}

\begin{proof} For a surface in $\mathbb{R}^3$, first perform the inverse stereographic projection of the surface to $S^3$. By Remark~\ref{rem-infinity}
the resulting surface is contained in a complex projective algebraic surface covered by at least $2$-dimensional algebraic family of complex conics. By Theorem~\ref{thm:c-make-real} the surface has parametrization~\eqref{eq-circular} (for $n=2$) with real polynomials $X_1,\dots,X_{4}$ of total degree at most $2$. Take a generic real point on the surface; assume without loss of generality that it has parameters $(u,v)=(0,0)$. Then the curves $u=t v$, where $t\in\R$, are real conics, hence circles, covering an open subset of the surface. Project the surface back to $\R^3$ from our point. We get a surface containing a line segment and infinitely many circular arcs through almost every point (continuously depending on the point). By \cite[Theorem~1.1]{NS11}, the resulting surface is a subset of a quadric or a plane. Since it is covered by a 2-dimensional family of circles, by the classification of quadrics, the resulting surface, and hence the initial one, is a subset of a sphere or a plane.
\end{proof}

%We conclude the section by an open problem: does the lemma remain true in dimension $n>3$? Counterexample by Kollar

%Let us conclude the paper by a few open problems.

\begin{rem} Theorem~\ref{haupt} remains true, if the sphere $S^n$, its equation~\eqref{eq-pythagorean-n}, and circular arcs are replaced by the cylinder $\mathbb{R}\times S^{n-1}$, its equation $X_2^2+\dots+X_{n+1}^2=X_{n+2}^2$, and arcs of conics respectively. Moreover, one can conclude that the initial arcs of conics are the curves $u = \mathrm{const}$ and $v = \mathrm{const}$ in the resulting parametrization. The proof is the same except one step: To show that $\Phi$ is not covered by a $1$-dimensional family of real lines, notice that all the real lines in $\R\times S^{n-2}$ are parallel. If $\Phi$ were ruled, then it would be a cylinder with a conic in the base, hence no two conics in $\Phi$ had a unique common point, a contradiction. This version of the theorem is applied in \cite{Morozov-20}.
\end{rem}

\section{Conclusions and open problems}
\label{sec-open}

\vspace{-0.3cm}

\subsection*{Proof of main theorem}
\label{completion}

%Performing the stereographic projection, we get a $4$-dimensional version of Main Theorem~\ref{mainthm}:

We solve the posed problem first in $4$-dimensional space and then in $3$-dimensional space.

\begin{cor}%[4D Classification Theorem]
\label{cor-4DBconj}
%Assume that through each point of an analytic surface in $\mathbb{R}^4$ one can draw two noncospheric circular arcs fully contained in the surface (and analytically depending on the point). Assume that for each point in some dense subset of the surface the number of circular arcs passing through the point and fully contained in the surface is finite.
If a surface in $\mathbb{R}^4$ satisfies the assumptions of Theorem~\ref{haupt} with $S^n$ replaced by $\mathbb{R}^4$ then some M\"obius transformation of the surface (besides a $1$-dimensional subset) has parametrization~\eqref{eq-4DBconj}.
\end{cor}

%In Eq.~\eqref{eq-4DBconj} we use the following conventions. Space $\mathbb{R}^4$ is identified with $\mathbb{H}$. For each polynomial $A\in\mathbb{H}[u,v]$ and real numbers $\hat u,\hat v$ (but not quaternions) the value $A(\hat u,\hat v)$ is well-defined. Thus %the polynomial $A$ or a
%any rational expression in such polynomials indeed defines an analytic surface in $\mathbb{R}^4$.
%Conversely,
%one can see immediately that
%almost every surface~\eqref{eq-4DBconj} contains two circular arcs $u=\mathrm{const}$ and $v=\mathrm{const}$ through each point by Lemma~\ref{l-circle} because the curves $\alpha(u)=(au+b)(cu+d)^{-1}$ and $\beta(u)=(cu+d)^{-1}(au+b)$ are circular arcs  for almost every $a,b,c,d\in\mathbb{H}$.

\begin{proof}
Perform the inverse stereographic projection of $\mathbb{R}^4$ to $S^4$.
By Theorem~\ref{haupt} the resulting surface has  parametrization~\eqref{eq-circular} with $n=4$.
%for some $X_1,\dots,X_6\in\mathbb{R}_{22}$ satisfying Eq.~\eqref{eq-pythagorean-n}.
By Theorem~\ref{th-pythagorean} up to a linear transformation preserving Eq.~\eqref{eq-pythagorean-n} we have Eq.~\eqref{eq-param-pythagorean}.
Then the stereographic projection of the surface given by~\eqref{eq-circular} and~\eqref{eq-param-pythagorean} is a M\"obius transformation of the
initial surface and has parametrization~\eqref{eq-4DBconj}.
% for some $A,B,C\in\mathbb{H}_{11}$, $D\in\mathbb{R}_{22}$ such that $|B|^2D,|AC|^2D\in\mathbb{R}_{22}$. In particular, $AC\in \mathbb{H}_{11}$. Performing the stereographic projection $X_1:\dots:X_6\mapsto (X_1+iX_2+jX_3+kX_4)/(X_6-X_5)$, we obtain that the initial surface in $\mathbb{R}^4$ is the image of the surface $\Phi(u,v)=\bar A(u,v)^{-1}B(u,v)\bar C(u,v)^{-1}$ under a composition of inversions.
\end{proof}

\begin{exmp}\label{ex-4D} The surface
$
\Phi(u, v) = (u + i)(v + j)^{-1}
$
in $\mathbb{R}^4$ contains a line and infinitely many circles through each point but is not a quadric, in contrast to an analogous result in $\mathbb{R}^3$~\cite[Theorem~1.1]{NS11}.
\end{exmp}

\begin{rem}\label{rem-Kollar} By~\cite[Theorems~3,8 and Propositions~11,12]{Kollar-16} all surfaces in $\mathbb{R}^4$ containing \emph{infinitely many} circles through each point (not considered in Corollary~\ref{cor-4DBconj}) are either spheres or M\"obius transformations of Example~\ref{ex-4D}. In particular, Corollary~\ref{cor-4DBconj} is still true for such surfaces.
%Surfaces in $\mathbb{R}^4$ containing infinitely many circles through each point (not considered in Corollary~\ref{cor-4DBconj}) are discussed in~\cite[Theorem~8]{Kollar-16}.
\end{rem}

\begin{proof}[Proof of Theorem~\ref{mainthm}]
Consider $\mathbb{R}^3$ as a subset of $\mathbb{R}^4$. Consider the following $3$ cases.

First assume that the two circular arcs drawn through each point of the surface are cospheric. Then the surface is a subset of~(D) by Theorem~\ref{thm:cospheric} proved in the second appendix.
%\cite[Theorem~20 in p.~296]{coolidge:1916} or \cite[Theorem~3.5]{NS11}.

Second assume that there is an open subset of the surface such that through each point of the subset one can draw infinitely many (not nested) circular arcs contained in the surface.  Then
the surface is a subset of~(D) by Lemma~\ref{l-infinite}. %Assume further that the two circular arcs drawn through some point (and hence through each sufficiently close one) are not cospheric and that through each point of a dense subset of the surface one can draw only finitely many pairwise transversal circular arcs contained in the surface.
%First assume that there is an open subset of the surface such that through each point of the subset one can draw infinitely many circular arcs contained in the surface. Then the surface is a sphere \mscomm{find ref!} being a particular case of set~(D). Assume further that through each point of a dense subset of the surface one can draw only finitely many circular arcs contained in the surface.
%
%Assume that the two circular arcs drawn through each point of the surface are cospheric. Then the surface has form~(D) by \cite[Theorem~20 in p.~296]{coolidge:1916} or \cite[Theorem~3.5]{NS11}. Assume further that the two circular arcs drawn through some point (and hence through each sufficiently close one) are not cospheric.

Otherwise an open subset of the surface satisfies the assumptions of Theorem~\ref{haupt} with $S^n$ replaced by $\mathbb{R}^3$. By Corollary~\ref{cor-4DBconj} a M\"obius transformation of the subset has paramerization~\eqref{eq-4DBconj}. By Theorem~\ref{cor-3D} and Lemma~\ref{l-analytic} the surface is a M\"obius transformation of a subset of (C), (D), or~(E).
\end{proof}

\vspace{-0.6cm}

\subsection*{Open problems}

Variations of the initial question with some additional restrictions on the surfaces are also interesting.

%\begin{itemize}
%\item
\begin{problem}
Let $\alpha$, $r$, and $R$ be fixed. Find all surfaces in $\mathbb{R}^3$ such that through each point of the surface one can draw two transversal circlular arcs fully contained in the surface and

\noindent(1) having radii $r$ and $R$; or (2) intersecting at angle $\alpha$; or (3) the planes of which intersect at angle~$\alpha$.
\end{problem}

\begin{problem} Find all surfaces in $\mathbb{R}^3$ containing two parabolas with vertical axes through each point.
\end{problem}

\begin{problem} Find all surfaces in $\mathbb{R}^4$ containing a line segment and a circular arc through each point.
\end{problem}

%\begin{problem}\label{one-variable} Prove that polynomials $X_{1},\dots,X_6\in\mathbb{R}[u]$ satisfy equation~\eqref{eq-pythagorean} if and only if
%\begin{equation*}\label{eq-one-variable}
%%\begin{aligned}
%X_1+iX_2+jX_3+kX_4=2ABD,\quad
%X_5=(|B|^2-|A|^2)D,\quad
%X_6=(|B|^2+|A|^2)D
%%\end{aligned}
%\end{equation*}
%for some $A,B\in\mathbb{H}[u]$, $D\in\mathbb{R}[u]$.
%Does a similar result hold for 10 polynomials and $A,B\in\mathbb{O}[u]$?
%\end{problem}

%\item Let $\alpha$ be fixed. Find all surfaces in $\mathbb{R}^3$ such that through each point of the surface one can draw two circlular arcs  fully contained in the surface and intersecting at angle $\alpha$.
%\item Let $\alpha$ be fixed. Find all surfaces in $\mathbb{R}^3$ such that through each point of the surface one can draw two circlular arcs  fully contained in the surface, the planes of which intersect at angle $\alpha$.
%\end{itemize}

%It is interesting to find all surfaces in $\mathbb{R}^3$ such that through each point of the surface one can draw two circular arcs fully contained in the surface and \mscomm{restate!}
%\begin{itemize}
%\item having constant radii;
%\item intersecting at constant angle;
%\item the planes of which intersect at constant angle.
%\end{itemize}

In our main results the analyticity is not really a restriction. In fact, by \cite[Theorem~3.7]{Guth-Zahl-15} even a sufficiently large grid of circular arcs must already be a subset of an analytic surface containing $2$ circles through each point, hence a subset of (C), or (D), or (E). Here an \emph{$n\times n$ grid of arcs} is two collections of $n+1$ disjoint arcs such that each pair of arcs from distinct collections intersects. The following ``curved chessboard problem'' is the strongest possible form of Theorem~\ref{mainthm}.
%See Theorem~\ref{mainthm} and Subsection ``Background'' for the required definitions.

\begin{problem}\label{pr-curved-chessboard} Is each $8\times 8$ grid of circular arcs %(``curved chess-board'')
contained in one of sets (C), (D), (E)?
\end{problem}

As a corollary, one could get the following incidence result (A.~Bobenko, private communication).

\begin{problem} Ten blue and ten red disjoint circles are given in $\mathbb{R}^3$. Each variegated pair except one has a unique intersection point. Is it true that the latter pair must have a unique intersection point?
\end{problem}

It is natural to ask for an analogue of simple parametrization~\eqref{eq-4DBconj} for surfaces in $\mathbb{R}^3$ rather than $\mathbb{R}^4$.

\begin{exmp}
Let $M=(M_{ij})$ be a skew-hermitian $3\times 3$ matrix such that $M_{22}\in \mathbb{H}_{10}$, $M_{33}\in \mathbb{H}_{01}$, and all the other entries $M_{ij}\in \mathbb{H}$. The quasideterminant $|M|_{11}(u,v)$ defined in \cite[Example~1.2.3.b]{Gelfand-etal-05}
is fraction-linear in $u$ and in $v$ by \cite[Theorem~1.4.2(i)]{Gelfand-etal-05}.
By Lemma~\ref{l-axial} $\Phi(u,v)=|M|_{11}(u,v)$ is a surface in $\mathbb{R}^3$ containing $2$ circles $u=\mathrm{const}$ and $v=\const$ through each point, for almost every $M$.
\end{exmp}

\begin{problem}
Can each surface in $\mathbb{R}^3$ containing $2$ noncospheric circles through each point be parametrized as a quasideterminant of a skew-hermitian matrix with the entries from $\mathbb{H}[u,v]$?
\end{problem}

One of our results (Corollary~\ref{cor-reducibility}) leads to a conjecture that unique factorization holds in a sense for quaternionic polynomials of degree 1 in one of the two variables. Let us make it precise (cf.~\cite{Ore}).

\begin{problem} Two decompositions of a polynomial from $\mathbb{H}[u,v]$ into irreducible factors of degree $\le 1$ in $v$ are given. Is it true that the factors of the two decompositions are similar in pairs?
\end{problem}

\begin{problem} \label{pr-deg2} Do Lemmas~\ref{l-splitting4} and~\ref{th-linear-PDT} remain true for $P,Q,R$ of entire degree $2$ and $3$ respectively?
\end{problem}

\begin{problem}
Does Splitting Lemma~\ref{l-splitting-basic} remain true for polynomials in more than $2$ variables?
\end{problem}

Although our results are stated for quaternionic polynomials, they seem to reflect a general algebraic phenomenon. %which deserves investigation.
The latter may be useful to answer our geometric question in higher dimensions.

\begin{problem} Do \ref{l-adding variable2},\ref{l-splitting-basic},\ref{l-splitting4},%\ref{th-linear-PDT},
\ref{cor-reducibility}
remain true, if  %quaternions
$\mathbb{H}$ is replaced by another ring with
an~involution?%octonions or complex $n\times n$ matrices?
\end{problem}

%It is interesting to obtain \emph{octonion} counterparts of the results of this section. In particular, this could help to find all surfaces in $\mathbb{R}^n$ containing two circles through each point for $n>4$.
%NO! For octonions one should modify the definition of a reducible polynomial. A polynomial $P\in\mathbb{O}[u,v]$ is \emph{reducible}, if it is a product of polynomials of (lexicographically) smaller degree (not necessarily nonconstant). %A reducible polynomial with octonion coefficients is not necessarily a product of two nonconstant factors.`

\subsection*{Acknowledgements}

%\mscomm{ Images by Shi and Lubbes. Ask permission!}
This results have been presented at Moscow Mathematical Society seminar, SFB ``Discretization in geometry and dynamics'' colloquim in Berlin, Ya.~Sinai -- G.~Margulis conference, and the conference ``Perspectives in real geometry'' in Luminy. The authors are especially grateful to A.~Pakharev for joint numerical experiments, related studies \cite{Pakharev-Skopenkov-15}, and
for pointing out that numerous surfaces we tried to invent all have form~\eqref{eq-4DBconj}. The authors are grateful to N.~Lubbes and L.~Shi for parts of
Video~\ref{movie}, to
A.~Gaifullin and S.~Ivanov for finding gaps in earlier versions of the proofs, to A.~Bobenko, J.~Capco, S.~Galkin, A.~Kanunnikov, O.~Karpenkov, A. Klyachko, J.~Koll\'ar, W.~K\"uhnel, A.~Kuznetsov, E.~Morozov, N.~Moshchevitin, S.~Orevkov, F.~Petrov, R.~Pignatelli, F. Polizzi, H. Pottmann,  G.~Robinson, I.~Sabitov, J.~Schicho, K.~Shramov, S.~Tikhomirov, V.~Timorin, M.~Verbitsky, E.~Vinberg, J.~Zahl, S.~Zub\.e for useful discussions. The first author is grateful to King Abdullah University of Science and Technology for hosting him during the start of the work over the paper.

%\textit{Grateful to you for reading up to this point}. 	

%The authors are grateful to I.~Arzhantsev and D.~Timashev for useful discussions.

\section*{Appendix. A rationality result}

In this appendix we give a direct proof of Lemma~\ref{l-rational} going along the lines of \cite{schicho:2001} and exposed for nonspecialists.
We use notation of Section~\ref{sec:proofs2}, in particular, from the statements of lemmas there.

A projective algebraic surface $\Psi$ is \emph{unirationally ruled} (or simply \emph{uniruled}), if there are an algebraic curve $\eta$ and a rational map $\eta\times P^1\dasharrow\Psi$ whose image is the whole $\Psi$ possibly besides a $1$-dimensional set. A surface $\Psi$ is \emph{birationally ruled} (or simply \emph{ruled}), if there is a birational map $\eta\times P^1\dasharrow\Psi$. A curve $\eta$ is \emph{rational}, if there is a birational map $P^1\dasharrow\eta$.

\begin{lemma} \label{l-uniruled} The surface $\hat\Phi$ is unirationally ruled.
\end{lemma}

\begin{proof} Let $\hat\gamma$ be the irreducible curve in the variety of conics given by Lemma~\ref{l-algebraic}. Consider the algebraic set $\hat\Psi := \{(P, \alpha)\in \hat\Phi\times\hat\gamma : P\in\alpha\}. $ The second projection $\hat\Psi\to \hat\gamma$ is a rational map such that a generic fiber is a conic (hence a rational curve). By the Noether--Enriques theorem \cite[Theorem~III.4]{Beauville}  $\hat\Psi$ is birationally ruled. In particular there is an algebraic curve $\eta$ and a rational map $\eta\times P^1\dasharrow \hat\Psi$, whose image is the whole $\hat\Psi$ besides a $1$-dimensional set. Compose the map with the first projection $\hat\Psi\to\hat\Phi$. This projection is surjective by \cite[Theorem~2 in \S I.5.2]{Shafarevich} because the image contains the $2$-dimensional subset $\bigcup_v\alpha_v\cap\Omega=\Phi\cap\Omega$ by Lemma~\ref{l-cover}. We get a rational map $\eta\times P^1\dasharrow \hat\Phi$, whose image is $\hat\Phi$ besides a $1$-dimensional set, i.e., $\hat\Phi$ is unirationally ruled.
\end{proof}

%The following known lemma for which we could not find a reference is the most technical part of the proof.

\begin{lemma} \label{l-ruled} Each unirationally ruled surface is birationally ruled.
\end{lemma}

\begin{proof}
Let $\hat\Phi$ be a unirationally ruled surface and $\eta\times P^1\dasharrow\hat\Phi$ be a rational map whose image is the whole $\hat\Phi$ besides a $1$-dimensional set.
%By the Hironaka theorem (or by an earlier Zariski theorem sufficient in our situation) the surface $\hat\Phi$ has a \emph{desingularization} $d:\tilde\Phi\to\hat\Phi$, i.e., a proper birational map from a smooth projective algebraic surface $\tilde\Phi$ to the surface $\hat\Phi$.
Take a desingularization $d:\tilde\Phi\to\hat\Phi$ defined in the 1st paragraph of the proof of Lemma~\ref{l-linear-pencil}.
Let $\hat\Phi\dasharrow\tilde\Phi$ be the inverse rational map of the desingularization.

Consider the rational map $\eta\times P^1\dasharrow \hat\Phi\dasharrow \tilde\Phi$. By the theorem on eliminating indeterminacy \cite[Theorem II.7]{Beauville} this rational map equals a composition $\eta\times P^1\dasharrow \tilde\Psi\to \tilde\Phi$, where  $\tilde\Psi$ is a smooth projective algebraic surface, the first map is birational, and the second map is rational and defined everywhere. In particular, $\tilde\Psi$ is birationally ruled. Since the image $\eta\times P^1\dasharrow \hat\Phi$ is the whole $\hat\Phi$ besides a $1$-dimensional set and the surfaces are compact, by \cite[Theorem~2 in \S I.5.2]{Shafarevich} it follows that $\tilde\Psi\to \tilde\Phi$ is surjective.

By the Enriques theorem \cite[Theorem~VI.17 and Proposition~III.21]{Beauville}, a smooth projective algebraic surface is birationally ruled if and only if for each $k>0$ the $k$-th tensor power of the exterior square of the cotangent bundle has no sections except identical zero (in other terminology, \emph{the surface has Kodaira dimension} $-\infty$, or \emph{all plurigeni vanish}). Assume, to the contrary, that $\tilde\Phi$ has such a section (\emph{pluricanonical section}). Then the pullback under the surjective rational map $\tilde\Psi\to\tilde\Phi$ is such a section for the birationally ruled surface $\tilde\Psi$, a contradiction. Thus $\tilde\Phi$, and hence $\hat\Phi$, is birationally ruled.
\end{proof}

%\begin{lemma} \label{l-rational} The surface $\hat\Phi$ is rational.
%\end{lemma}

\begin{proof}[Proof of Lemma~\ref{l-rational}]
By Lemmas~\ref{l-uniruled} and~\ref{l-ruled} the surface $\hat\Phi$ is birationally ruled. Thus there is a birational map $\hat\Phi\dasharrow\eta\times P^1$. Consider the two conics through a generic point of the surface $\hat\Phi$. Their images are two distinct rational curves through a point of $\eta\times P^1$. Since there is only one $P^1$-fiber through each point, at least one of the rational curves is nonconstantly projected to the curve $\eta$. By the L\"uroth theorem \cite[Theorem~V.4]{Beauville} the curve $\eta$ must be rational, and hence $\hat\Phi$  is rational.
%%Suppose, indirectly, that $\hat\Phi$ is irrational. Then any rational curve on $\hat\Phi$ is contained in a fiber of the ruling. Therefore, there is precisely one pencil of rational curves on $\hat\Phi$, and $\hat\Phi$ cannot be multiple conical.
\end{proof}

\begin{rem} We conjecture that the following generalization of Lemma~\ref{l-rational} is true: \emph{an algebraic surface containing two rational curves through almost every point is rational}. See~\cite[Definition~IV.3.2, Theorems~ IV.5.4, IV.3.10.3, IV.2.10, Corollary~IV.5.2.1, Exercise~IV.3.12.2]{Kollar} for a sketch of the proof.
%%\emph{First proof}.
%%Let $\widetilde{\Phi}\to\hat\Phi$ be a desingularization. Since $\hat\Phi$ contains $2$ rational curves through (almost) each point it follows that $\widetilde{\Phi}$ contains $2$ rational curves through (almost) each point. Since $\widetilde{\Phi}$ is smooth, by \cite[Theorem~IV.5.4]{Kollar} it follows that there is a rational \mscomm{(?)} map $\widetilde{\Phi}\to Z$ such that the fibers are either rational curves (and $Z$ is an irrational curve) or rational surfaces (and $Z$ is a point). Let us show that the former case (the fibers are rational curves and $Z$ is an irrational curve) is impossible. Indeed, in this case each rational curve in $\widetilde{\Phi}$ must be a fiber (because a rational map from a rational curve to an irrational curve $Z$ must be constant). Then $\widetilde{\Phi}$ cannot contain $2$ distinct rational curves through (almost) each point. Thus the fibers are rational surfaces and $Z$ consists of a single point (because $\widetilde{\Phi}$ is irreducible). Then $\widetilde{\Phi}$, and hence $\hat\Phi$ is rational.
\end{rem}

%%%%%%%%%%%%%%%%%%%%%%%%%%%%%%%%%%%%%%%%%%%%%%%%%%%

%\bigskip
%\bigskip
%\bigskip
%\bigskip

\section*{Appendix. Surfaces with infinitely many conics through each point}

% Change footnote and epigraph!!!

\footnotetext{This appendix is not present in the published version of this paper; it fills a gap there and has been submitted as an erratum.}

\begin{flushright}
\textit{It may be true that there is no subject in mathematics upon which more\\
unintelligible nonsense has been written than that of families of curves\\
R.\,P.\,Agnew, 1960}
\end{flushright}

In this appendix we prove auxiliary Theorems~\ref{thm:c-make-real} and~\ref{thm:cospheric} stated below. The former is essentially due to J.\,Schicho \cite{schicho:2001} (cf. the work by J.\,Koll\'ar~\cite{Kollar-16}), and the latter --- to J.L.~Coolidge \cite{coolidge:1916}.

\begin{thm}\label{thm:c-make-real}
Assume that through each point of an analytic surface $\Phi$ in a domain in ${P}^n$ one can draw infinitely many complex conics such that their intersections with the domain are contained in the surface. Assume that among the infinite number, one can choose two transversal conics analytically depending on the point. Then the surface (possibly besides a one-dimensional subset) has a parametrization~\eqref{eq-conical} for some $X_1,\dots,X_{n+1}\in\mathbb{C}[u,v]$ of \emph{total} degree~$2$. Moreover, if the set of real points of $\Phi$ is $2$-dimensional and is not covered by a $1$-dimensional family of real lines, then there is parametrization~\eqref{eq-conical} for some \emph{real} $X_1,\dots,X_{n+1}\in\mathbb{R}[u,v]$ of total degree~$2$.
\end{thm}

The assumption that $\Phi$ is not ruled has been added to simplify the proof; it does not seem to be essential.

%By a \emph{(complex) conic} we mean an irreducible degree $2$ algebraic curve in complex projective space $P^n$. 
A \emph{real conic} is a conic with infinitely many real points, and also the set of real points itself.
%, if no confusion arises.

\begin{exmp}\label{exm:unreal-cone}
The surface $\Phi=\{(x,y,z)\in\mathbb{C}^3:x^2+y^2+z^2=0\}$ contains infinitely many complex (but not real) conics through each point except the origin and cannot be parametrized by real polynomials. This shows that the assumption on the dimension of the set of real points is essential.
\end{exmp}

\begin{exmp}[{\cite[Remark~4]{schicho:2001}}]
The surface $\Phi=\{(x,y,z)\in\mathbb{C}^3: yx^2+z^2+1=0\}$ has parametrization~\eqref{eq-conical} by \emph{complex} polynomials $X_1,\dots,X_{4}\in\mathbb{C}[u,v]$ of degree at most $2$ in \emph{each} of the variables $u$ and $v$, but cannot be parametrized by \emph{real} polynomials of degree at most $2$ in each of the variables $u$ and $v$. The surface contains two families of real conics $x=\const$ and $y=\const$.
%\textbf{Add reference!} The surface $\Phi=\{(x,y,z)\in\mathbb{C}^3: (x^2+y^2+z^2-1)(x^2+y^2+z^2-2)-1/10=0$ has parametrization~\eqref{eqn:n-cylinderparam} by \emph{complex} polynomials $X_1,\dots,X_{n+1}\in\mathbb{C} [u,v]$ of degree at most $2$ in \emph{each} of the variables $u$ and $v$ (thus $u=\mathrm{const}$ and $v=\mathrm{const}$ are families of conics on the surface), but cannot be parametrized by \emph{real} polynomials of whatever degree.
\end{exmp}

We prove Theorem~\ref{thm:c-make-real} by exhaustion, using the following classification of surfaces containing two conics through each point due to J.\,Schicho \cite{schicho:2001}. One of the classes is del Pezzo surfaces, but we omit their definition because that class is going to be ruled out soon.

A map between algebraic subsets $X\subset P^d$ and $Y\subset P^n$ is \emph{regular}, if in a neighborhood of each point of $X$, it is given by polynomials in homogeneous coordinates.
%Denote by $^*$ the complex conjugation on $P^d$.
An \emph{antiregular} map is the composition of a regular map with the complex conjugation $^*$ on $P^d$. A \emph{projective} map $P^d\dashrightarrow P^n$ is given by linear polynomials in homogeneous coordinates. %(Dashes in the notation remind that it may not be defined everywhere.)
For other %used
terminology we refer to~\cite[\S2]{schicho:2001}.

% Let $X,Y\subset P^n$ be algebraic subsets. A \emph{rational map} $f\colon X\dasharrow Y$ is a map defined on a subset of $X$ being the complement of an algebraic subset of smaller dimension, assuming values in the set $Y$, and given by polynomials in homogeneous coordinates of $P^n$. (Dashes in the notation remind that a rational map may not be defined everywhere in $X$.) A continuous extension of $f$ to a larger subset of $X$ is still called a \emph{rational map} (it may not be given by the same polynomials). If the restriction of $f$ to complements of certain algebraic subsets in $X$ and $Y$ of smaller dimension is bijective, and the inverse map is rational as well, then $f$ is called a \emph{birational map}.

\begin{thm}\label{thm:classification}
Assume that through each point of an analytic surface $\Phi$ in a domain in ${P}^n$ one can draw two transversal conics intersecting each other only at this point (and analytically depending on the point) such that their intersections with the domain are contained in the surface. Then the surface $\Phi$ is the image of a subset of one of the following algebraic surfaces $\overline{\Phi}$ under a projective map $N\colon P^d\dashrightarrow P^n$:
\begin{enumerate}
\item\label{item:quadric} a quadric or a plane in ${P}^3$ ($d=3$); or
\item\label{item:ruled} (the closure of) the ruled surface $\overline{\Phi}(u,v)=(1:u:v:u^2:uv)$ in ${P}^4$ ($d=4$); or
\item\label{item:veronese} (the closure of) the \emph{Veronese surface} $\overline{\Phi}(u,v)=(1:u:v:u^2:uv:v^2)$ in ${P}^5$ ($d=5$); or
\item\label{item:del-pezzo} a \emph{del Pezzo surface}, i.~e., a linearly normal degree $d$ surface in ${P}^d$. %($3\le d\le 9$).
\end{enumerate}
The restriction $N\colon\overline{\Phi}\to N\overline{\Phi}$ is
a regular birational map preserving the degree of the surface and the degree of any curve with the image not contained in the set of singular points of $N\overline{\Phi}$.
\end{thm}

For $n=3$ this is a straightforward consequence of \cite[Theorems~5-7 and Proposition~1]{schicho:2001}. For $n>3$ one cannot formally apply those results, because they stated for surfaces in ${P}^3$ rather than in ${P}^n$. But one can perform a simple reduction as follows.

\begin{proof}[Proof of Theorem~\ref{thm:classification}]
The surface $\Phi$ is contained in an algebraic one by Lemma~\ref{l-algebraic} and Remark~\ref{rem-infinity}. Let $\overline{\Phi}$ be its linear normalization defined in \cite[\S2]{schicho:2001}. By that definition, $\overline{\Phi}$ is a linear normalization of a generic projection of $\Phi$ to ${P}^3$ as well. The projection satisfies all the assumptions of Theorem~\ref{thm:classification} for $n=3$. Then by \cite[Theorem~5 and Proposition~1]{schicho:2001} $\overline{\Phi}$ is one of surfaces \ref{item:quadric}--\ref{item:del-pezzo}. By \cite[Theorem~6]{schicho:2001} $\Phi$ is the image of (a subset of) $\overline{\Phi}$ under a projective map, and the map has the required properties.
\end{proof}

%\smallskip

\begin{lem}\label{lem:del-pezzo}
The image of a del Pezzo surface under a projective map (preserving the degrees of curves with the images not contained in the set of singular points) cannot satisfy the assumptions of Theorem~\ref{thm:c-make-real}.
\end{lem}

%\smallskip
%\noindent\textbf{Proof of Lemma~\ref{lem:del-pezzo}.}
\begin{proof}
A del Pezzo surface has degree $d\le 9$ \cite[\S4]{schicho:2001}, and by
\cite[Proposition~1 and Theorem~10]{schicho:2001} it contains finitely many pencils of conics. The number of conics on the surface not belonging to pencils is at most countable (e.~g., because the class group of $\overline{\Phi}$ is finitely generated \cite[\S4]{schicho:2001}). Thus there are only finitely many conics through a generic point of a del Pezzo surface, and hence through a generic point of its image, because the degrees of curves are preserved.
\end{proof}

\begin{proof}[Proof of the first part of Theorem~\ref{thm:c-make-real}]
By Theorem~\ref{thm:classification} and  Lemma~\ref{lem:del-pezzo}, $\Phi$ is the projective image of one of the surfaces \ref{item:quadric}---\ref{item:veronese}. These surfaces, hence their image $\Phi$, have required parametrization~\eqref{eq-conical} for some $X_1,\dots,X_{n+1}\in\mathbb{C}[u,v]$ of total degree at most $2$.
\end{proof}

Now let us prove the ``moreover'' part of the theorem. Although this could be deduced from general theory of linear normalization over $\R$, we could not find any reference suitable for direct citation; see the following example and \cite{bib:mathoverflow}.

\begin{exmp}\label{exm:normalization}
The cone ${\Phi}=\{(x,y,z)\in\mathbb{C}^3:x^2+y^2-z^2=0\}$ is its own real linear normalization, but there is a complex linear normalization map ${\Phi}\to\Phi$, $(x,y,z)\mapsto(z,iy,x)$, which is not given by real polynomials. (This is indeed a complex linear normalization map because the latter is only defined up to a projective isomorphism.) This shows that the complex linear normalization map $\overline{\Phi}\to\Phi$ in Theorem~\ref{thm:classification} may not be given by real polynomials. Using linear normalization over $\R$, one may wish to find \emph{another} surface $\overline{\Phi}_\R$ such that the map $\overline{\Phi}_\R\to\Phi$ is given by real polynomials; but then it becomes unclear why $\overline{\Phi}_\R$ itself can be parametrized by \emph{real} polynomials of degree~$2$.
\end{exmp}

Thus we present an alternative proof. The only interesting case is when $\Phi$ is a projective image of the Veronese surface~\ref{item:veronese}; we are going to show that the preimage of the real points can be made real by a complex automorphism of the surface. We need several well-known lemmas, which could not find a reference for; see~\cite{bib:mathoverflow}. %Lemma~\ref{lem:exceptional} is discussed in \cite[Chapter~II, \S4.5]{bib:shafarevich} in the particular case when the image of ${P}^2$ is smooth, which is not necessarily the case here.

%First consider the generic case: assume that the surface $\Phi$ is the image of the Veronese surface $\overline{\Phi}$ under a projective map $N\colon{P}^5\to {P}^n$ such that the restriction $N\colon \overline{\Phi}\to N\overline{\Phi}$ is birational and defined everywhere, and the set of real points of $\Phi$ is $2$-dimensional. Then let $X\colon {P}^2\to\overline{\Phi}\to {P}^n$ be the composition of the isomorphism $(u:v:w)\mapsto (w^2:wu:wv:u^2:uv:v^2)$ and the projective map $N\colon{P}^5\to {P}^n$.

\begin{lem}\label{lem:conjugation}
If an irreducible algebraic surface $\Phi\subset {P}^n$ has $2$-dimensional set of real points, then $\Phi^*=\Phi$.
\end{lem}

%\noindent\textbf{Proof of Lemma~\ref{lem:conjugation}.}
\begin{proof} Assume that $\Phi^*\ne\Phi$. Since $\Phi$ is irreducible, it follows that $\Phi^*\cap\Phi$ is an algebraic curve (or is finite). But $\Phi^*\cap\Phi$ contains the $2$-dimensional set of real points of $\Phi$. Thus the intersection of the algebraic curve $\Phi^*\cap\Phi$ with a generic real hyperplane in ${P}^n$ is infinite, a contradiction.
\end{proof}

\begin{lem}\label{lem:veronese}
The \emph{Veronese} map $V\colon P^2\to P^5$, $(u:v:w)\mapsto (w^2:wu:wv:u^2:uv:v^2)$, has a regular inverse $V^{-1}\colon V(P^2)\to P^2$. The \emph{Veronese surface} $V(P^2)$ is smooth and contains no lines.
\end{lem}

%\noindent\textbf{Proof of Lemma~\ref{lem:veronese}.}
\begin{proof} The inverse map is given by one of the linear polynomials $V^{-1}(x_1:\dots:x_6)=(x_2:x_3:x_1)$, or $(x_4:x_5:x_2)$, or $(x_5:x_6:x_3)$, depending on which of the triples is nonzero. The image of a line $l$ on $V(P^2)$, if any, would be a line $V^{-1}(l)$ on $P^2$. Parametrize $V^{-1}(l)$ by three coprime polynomials $(u(t):v(t):w(t))$ of degree $\le 1$. Then $u^2(t)+v^2(t)+w^2(t)$ has no real roots, hence has exactly two distinct complex conjugate roots. Thus the hyperplane $x_1+x_4+x_6=0$ in $P^5$ intersects $l$ at exactly $2$ points. Thus $l$ is not a line, a contradiction.
Since $V^{-1}$ is smooth, it follows that the differential $dV$ is nondegenerate, hence $V(P^2)$ is smooth.
\end{proof}

\begin{lem}\label{lem:exceptional} %(Cf.~\cite[Chapter~II, \S4.5]{bib:shafarevich})
Let $N\colon P^5\dashrightarrow P^n$ be a projective map
taking an algebraic surface $\overline{\Phi}\subset P^5$ containing no lines to a surface of the same degree. Then $\overline{\Phi}$ does not contain \emph{exceptional} curves, i.~e., curves mapped to points.
\end{lem}

%\noindent\textbf{Proof of Lemma~\ref{lem:exceptional}.}
\begin{proof}
Without loss of generality assume that $N\colon {P}^5\dashrightarrow {P}^n$ is a projection to a projective subspace, where $n=3$ or $4$. The preimage of a point under the restriction $N\colon\overline{\Phi}\to {P}^n$ is the intersection with a $(5-n)$-dimensional projective subspace containing the $(4-n)$-dimensional projection center.

Assume that the preimage is $1$-dimensional. Then by the Bezout theorem its closure intersects the projection center. Hence $\overline{\Phi}$ intersects the projection center. Take a generic $3$-dimensional subspace $H\subset P^5$ passing through the projection center. The number of intersection points of $H$ with $N\overline{\Phi}$ is the degree of $N\overline{\Phi}$. The number of intersection points of $H$ with $\overline{\Phi}$ is greater, but still finite in general position because $\overline{\Phi}$ does not contain lines (hence cannot contain the whole $1$-dimensional projection center). Thus $\overline{\Phi}$ cannot have the same degree as $N\overline{\Phi}$, a contradiction.
\end{proof}

%\begin{lem} \label{lem:exceptional} Cf.~\cite[Chapter~II, \S4.5]{bib:shafarevich}) A regular map ${P}^2\to P^n$ with a $2$-dimensional image has no \emph{exceptional} curves, i.~e., curves mapped to points.
%\end{lem}

%\noindent\textbf{Proof of Lemma~\ref{lem:exceptional}.}
%Without loss of generality assume that $N\colon {P}^5\to {P}^n$ is a projection to a projective subspace. The preimage of a point under the restriction $N\colon\overline{\Phi}\to N\overline{\Phi}$ is the intersection with a projective subspace of dimension $2$ or $1$ containing the projection center of dimension $1$ or $0$ respectively (for $n=3$ and $4$ respectively). If the preimage were $1$-dimensional, then its closure would intersect the projection center by the Bezout theorem, and the map $N\colon\overline{\Phi}\to N\overline{\Phi}$ would not be defined everywhere, a contradiction.
%\qed

%\smallskip

\begin{lem}[Cf.~\cite{bib:mathoverflow}]\label{lem:lift}
Let $N\colon P^d\dashrightarrow P^n$ be a projective map such that the restriction $N\colon \overline{\Phi}\to N\overline{\Phi}$ to a smooth algebraic surface $\overline{\Phi}\subset P^d$ is a regular birational map. Assume that $\overline{\Phi}$ does not contain exceptional curves, $\overline{\Phi}=\overline{\Phi}^*$, and $N\overline{\Phi}=(N\overline{\Phi})^*$.
Then the complex conjugation on ${P}^n$ lifts to an antiregular involution on $\overline{\Phi}$ so that the involutions commute with the map $N$.
\end{lem}

\begin{exmp}
The example $N\colon (u:v:w)\mapsto (iu:v:w)$, $d=n=2$, shows that the required involution may not come from the complex conjugation on ${P}^d$.
\end{exmp}

\begin{proof}[Proof of Lemma~\ref{lem:lift}]
First we lift to the nonsingular points, and then extend to the singular ones.

Let $\Sigma\subset\overline{\Phi}$ be the preimage of the singular points of the surface $N\overline{\Phi}$ under the map $N$. It is either an algebraic curve or a finite set because $N\colon\overline{\Phi}\to N\overline{\Phi}$ is birational.
Since $\overline{\Phi}$ does not contain exceptional curves, by \cite[Chapter~II, Theorem~2 in \S4.4]{Shafarevich} it follows that the inverse $N^{-1}$ of the birational map $N\colon \overline{\Phi}\to N\overline{\Phi}$ is regular outside the singular points of $N\overline{\Phi}$ (cf.~\cite[Proof~of~Theorem~6]{schicho:2001}). Thus the composition $f:=N^{-1}\circ N^*$ is a well-defined antiregular involution on $\overline{\Phi}-\Sigma$, because the set of singular points of $N\overline{\Phi}$ is invariant under the complex conjugation.

Consider the regular map $f^*\colon\overline{\Phi}-\Sigma\to f^*(\overline{\Phi}-\Sigma)$. It has an inverse map $(f^*)^{-1}\colon f^*(\overline{\Phi}-\Sigma)\to \overline{\Phi}-\Sigma$. Since $\overline{\Phi}$ is smooth, by \cite[Chapter~II, Theorem~3 in \S3.1]{Shafarevich} the maps uniquely extend to birational maps $f^*$ and $(f^*)^{-1}$ regular on the whole $\overline{\Phi}$ possibly except a finite set.

For each point $P\in\overline{\Phi}$ where $f^*$ is regular we have $N(f(P))=N^*(P)$. Indeed, take a sequence $P_n\to P$, $P_n\in\overline{\Phi}-\Sigma$. By the definition of $f$, we have $N(f(P_n))=N^*(P_n)$. By the continuity of $f^*$ at $P$, we get $N(f(P))=N^*(P)$.

Now assume that $(f^*)^{-1}$ is not regular at (i.~e., not extendable to) a point $Q\in\overline{\Phi}$. Then by \cite[Lemma~II.10]{Beauville} there is a curve $\gamma\subset\overline{\Phi}$ such that $f^*(\gamma)=Q$. By the assumptions of the lemma, $N(\gamma)$ is not a point, hence $N^*(\gamma)\ne N(Q^*)$. Then there is $P\in\gamma$ such that $N^*(P)\ne N(Q^*)$ and $f^*$ is regular at $P$. By the previous paragraph, $N^*(P)=N(f(P))=N(f(\gamma))=N(Q^*)$, a contradiction. Thus $(f^*)^{-1}$, analogously $f^*$, hence $f$ and $f^{-1}$, are regular on the whole $\overline{\Phi}$. By the continuity, the extended antiregular map $f$ is still an involution.
\end{proof}

\begin{lem}\label{lem:involution} %\cite[Lemma~9c]{bib:antiholomorphic}
Each antiregular involution of ${P}^{2m}$ is projectively conjugate to the complex conjugation. Each antiregular involution of ${P}^{2m-1}$ is projectively conjugate to either the complex conjugation or the involution $(x_1:\dots:x_{2m})\mapsto (-x_{m+1}^*:\dots:-x_{2m}^*:x_1^*:\dots:x_{m}^*)$.
\end{lem}

%\begin{exmp} The antiregular involution $(u:v)\mapsto (-v^*:u^*)$ on ${P}^{1}$ is not projectively conjugate to the complex conjugation $(u:v)\mapsto (u^*:v^*)$.
%\end{exmp}

%\noindent\textbf{Proof of Lemma~\ref{lem:involution}.}
\begin{proof}
First let us show that a regular map $f\colon{P}^{d}\to {P}^{d}$ having a regular inverse is projective (N.~Lubbes, private communication). Indeed, for $d=1$ this follows from the fundamental theorem of algebra. For $d\ge 2$ take a line $l$ and a hyperplane $h$ intersecting transversely at a unique point $P$. By \cite[Chapter~II, Corollary~2 in \S1.3]{Shafarevich}, it follows that $f(l)$ and $f(h)$ are algebraic surfaces intersecting transversely at the unique point $f(P)$. By the Bezout theorem, $f(l)$ is a line. Since $l$ is arbitrary, by the fundamental theorem of projective geometry (by M\"obius--von Staudt), $f$ is projective.

Thus any antiregular involution of ${P}^{d}$ is a composition of a projective transformation and the complex conjugation, i.~e., it is the projectivization of the map $v\mapsto Lv^*$ for some complex $(d+1)\times (d+1)$ matrix $L$. E.~g., for the identity matrix $L=\id$ the map is complex conjugation, and for the $2m\times 2m$ matrix $L=J:=\left(\begin{smallmatrix} 0 & -\id\\ \id & 0\end{smallmatrix}\right)$ it is $(x_1:\dots:x_{2m})\mapsto (-x_{m+1}^*:\dots:-x_{2m}^*:x_1^*:\dots:x_{m}^*)$.

The map $v\mapsto Lv^*$ is an involution, if and only if for each $v\in\mathbb{C}^{d+1}$ there is $k(v)\in\mathbb{C}$ such that $L(Lv^*)^*=k(v)v$ . Thus $LL^*=k\id$ for some $k\in\mathbb{C}$, where $L^*$ denotes the complex conjugate matrix (not to be confused with the commonly used notation for the conjugate transpose matrix). Thus $L$ and $L^*$ commute and $k\id=LL^*=L^*L=(LL^*)^*=k^*\id$, hence $k\in\mathbb{R}$. Dividing $L$ by $\sqrt{|k|}$, we arrive at the equation $LL^*=\pm\id$. Passing to determinants, we get $\det L (\det L)^*=(\pm 1)^{d+1}$, hence the minus sign is impossible for even $d$.

Projective transformations of ${P}^{d}$ act on the solutions of the equations $LL^*=\pm\id$ by the transformations $L\mapsto U^{-1}LU^*$, where $U$ is an invertible complex $(d+1)\times (d+1)$ matrix. It remains to show that each solution can be transformed to $\id$ and $J$ respectively, i.~e., each matrix $L$ satisfying $LL^*=\id$ (respectively, $LL^*=-\id$) has form $L=U^{-1}U^*$ (respectively, $L=U^{-1}JU^*$ ) for some invertible matrix $U$.

If $LL^*=\id$, then take $a\in\mathbb{R}$ such that $-e^{2ia}$ is not an eigenvalue of $L^*$. Then $U:=e^{ia}\id+e^{-ia}L^*$ is the required invertible matrix because $UL=(e^{ia}\id+e^{-ia}L^*)L=e^{ia}L+e^{-ia}\id=U^*$ by the equation $L^*L=LL^*=\id$.

If $LL^*=-\id$, then  take $a\in\mathbb{R}$ such that $e^{2ia}$ is not an eigenvalue of $JL^*$. Then $U:=e^{ia}J+e^{-ia}L^*$ is the required matrix: $UL=(e^{ia}J+e^{-ia}L^*)L=e^{ia}JL+e^{-ia}JJ^*=JU^*$ because $L^*L=LL^*=-\id=JJ^*$.
\end{proof}

\begin{proof}[Proof of the `moreover' part of Theorem~\ref{thm:c-make-real}]
By Theorem~\ref{thm:classification} and Lemma~\ref{lem:del-pezzo}, $\Phi$ is the projective image of one of the surfaces \ref{item:quadric}--\ref{item:veronese}.

\textit{Case \ref{item:quadric}.} Here ${\Phi}$ is a quadric or a plane.
Clearly, if the set of real points of $\Phi$ is $2$-dimensional, then it has a parametrization~\eqref{eq-conical} with real $X_1,\dots,X_{n+1}$: it is the inverse of a projection of a quadric from a point on it.

\textit{Case \ref{item:ruled}.} Here through a generic point of $\Phi$ there passes a unique line $u=\mathrm{const}$ contained in the surface. The line through a generic real point of $\Phi$ must be real, otherwise a complex conjugate line would be the second one. Thus the case is ruled out by the assumptions of the theorem.

\textit{Case \ref{item:veronese}.} By Lemmas~\ref{lem:conjugation}--\ref{lem:lift}, there  is an antiregular involution on ${P}^2$, taken to the complex conjugation of ${P}^n$ by $N\circ V\colon {P}^2\to {P}^n$. Lemma~\ref{lem:involution} implies that after an appropriate complex projective change of the coordinates $u:v:w$,
the map $N\circ V$ takes all real points $(u:v:w)$ of $P^2$ to real points of ${P}^n$. Let the resulting map be given by the polynomials $X_1,\dots,X_{n+1}$; they have total degree $2$. By Lemma~\ref{l-real}, after canceling a common factor, the polynomials %$X_1,\dots,X_{n+1}$ 
become real.
\end{proof}

Now we prove the following result (adding the details to \cite[Sketch of the proof of Theorem~3.5]{NS11}.

\begin{thm}[{Cf.~\cite[Theorem~3.5]{NS11}}]
\label{thm:cospheric}
Assume that through each point of an analytic surface in $\R^3$ one can draw two cospheric circular arcs fully contained in the surface and analytically depending on the point. Then the surface is a subset of a Darboux cyclide.
\end{thm}

\begin{proof}[Proof of Theorem~\ref{thm:cospheric}]
First perform the inverse stereographic projection to $S^3$ and denote by $\Phi$ the resulting surface. Then perform an analytic extension (cf.~the proof of Lemma~\ref{l-reduction}: extend an appropriate part of $\Phi$ to a complex analytic surface $\hat\Phi$ in a domain $\Omega\subset P^4$, and extend the real analytic families $\alpha_P$ and $\beta_P$ of real conics on $\Phi$ to complex analytic families $\hat\alpha_P$ and $\hat\beta_P$ of complex conics in ${P}^4$ parametrized by a point $P$ of $\hat\Phi$.

If through each point of $\hat\Phi$ one can draw infinitely many complex conics such that their intersections with $\Omega$ are contained in $\hat\Phi$, then by Theorems~\ref{thm:c-make-real} and~\ref{l-infinite} the desired result follows. Otherwise by Lemma~\ref{l-cover} the surface $\hat\Phi$ is covered by families of conics $\alpha_v$, $\beta_u$ as in the lemma. With these families at hand, we proceed by consideration of real surface $\Phi$ and just real families members, without need to worry about complex ones anymore.

Recall that spheres and circles in $S^3$ are its sections by 3- and 2-planes. By a \emph{generalized sphere} (respectively, \emph{generalized circle}) we mean a real projective 3-plane (respectively, 2-plane) in real projective space $\mathbb{R}P^4$ (not necessarily intersecting $S^3$). Generalized spheres $a_1x_1+\dots+a_5x_5=0$ and $b_1x_1+\dots+b_5x_5=0$, where $x_1:\dots:x_5$ are homogeneous coordinates in $\mathbb{R}P^4$, are \emph{orthogonal}, if  $a_1b_1+\dots+a_4b_4-a_5b_5=0$. A generalized sphere orthogonal to each sphere containing a fixed generalized circle is called \emph{orthogonal} to the circle. Clearly, for each two generalized circles there is a generalized sphere orthogonal to both of them.

Thus take two sufficiently close circles $\alpha_1$ and $\alpha_2$ in the family $\alpha_v$, and fix a generalized sphere $\Sigma$ orthogonal to them. Assume without loss of generality that each circle $\beta_u$ intersects both $\alpha_1$ and $\alpha_2$. Since the circles $\beta_u$ and $\alpha_1$ (respectively, $\alpha_2$) passing through the point $P(u,1)$ are cospheric, it follows that $\alpha_1\cup\beta_u$ and $\alpha_2\cup\beta_u$ are contained in some spheres $\Sigma_{1u}$ and $\Sigma_{2u}$ respectively. Since $\alpha_1 \perp \Sigma$ it follows that $\Sigma_{1u} \perp \Sigma$. Analogously,
$\Sigma_{2u} \perp \Sigma$. If $\Sigma_{1u}=\Sigma_{2u}$ for each $u$ then $\Phi$ is a sphere through %the circles
$\alpha_1$ and $\alpha_2$. Otherwise $\beta_u\subset\Sigma_{1u}\cap\Sigma_{2u} \perp \Sigma$ for each $u$. Analogously $\alpha_v\perp \Sigma$ for each $v$.

Let the equation of $\Sigma$ be $a_1x_1+\dots+a_5x_5=0$. Then all the generalized circles orthogonal to $\Sigma$ pass through the pole $\Sigma^\perp:=a_1:\dots:(-a_5)$. Thus a projection from the pole $\Sigma^\perp$ to a hyperplane takes $\alpha_v$ and $\beta_u$ to line segments, and hence takes $\Phi$ to a doubly ruled surface, i.e. a quadric. Thus $\Phi$ is contained in the intersection of $S^3$ with a quadratic cone with the vertex $\Sigma^\perp$. The initial surface, which is the stereographic projection of $\Phi$, is then a Darboux cyclide.
%(by one of the equivalent definitions of the latter in \cite[p.~1301]{bib:sk-kr}).
\end{proof}

%\smallskip
Finally we give a remark showing that the reciprocal of Theorem~\ref{mainthm} holds besides a few explicitly listed exceptions and thus indeed solves the problem stated in the first line of the paper.

\begin{rem}\label{rem-reciprocal}
Let $\Phi$ be one of the sets (C),(D),(E) from Theorem~\ref{mainthm} having dimension $2$.
Then through almost every point of $\Phi$ one can draw at least two circular arcs or line segments fully contained in $\Phi$, if and only if $\Phi$ is M\"obius equivalent to neither surface
\begin{equation}\label{eq-exception}
(x^2+y^2+z^2)^2-2ax^2-2ay^2-2bz^2\pm1=0,
\quad\text{where }
\begin{cases}
a\ne b>1\ne a^2, &\text{ for the ``$+$'' sign;}\\
a\ne b,          &\text{ for the ``$-$'' sign;}
\end{cases}
%a\ne b>1\ne a^2\text{ and }a\ne b\text{ for ``$+$'' and ``$-$'' respectively,}
\end{equation}
nor rotational ellipsoid/paraboloid/hyperboloid of $2$ sheets, nor parabolic/hyperbolic cylinder. The exceptional surfaces are exactly the rotational/translational surfaces~(D) that cannot be generated by rotation/translation of a circle or a line.
\end{rem}

\begin{proof}[Proof of Remark~\ref{rem-reciprocal}]
For the set~(E) there is nothing to prove. For the set~(C), 
the assertion follows from its equivalent definition in \S\ref{s:intro}. For the set~(D) M\"obius equivalent to a quadric, the assertion follows from the classification of quadrics up to isometry. For the set~(D) M\"obius equivalent neither to a quadric nor to surface~\eqref{eq-exception}, the assertion follows from \cite[Theorem~6.6, Proposition~3.2(i),(iii), 4.2, 4.5]{Takeuchi-00}, where the two circular arcs or line segments were explicitly constructed. To prove that remaining surfaces~\eqref{eq-exception} do not contain two circles or lines through a generic point, we apply the algorithm \cite[Remark~8]{PSS11} (this result was stated in \cite[Proposition~3.2(ii) and 4.5]{Takeuchi-00} without any indication of a proof).

Consider the case of the ``$+$'' sign in~\eqref{eq-exception}; the case of the ``$-$'' sign is similar and simpler. The inverse stereographic projection of~\eqref{eq-exception} is the intersection $\Psi$ of all quadrics in the pencil $X^T (A-tJ)X=0$, where $X=(x_1,x_2,x_3,x_4,x_5)$ are the homogeneous coordinates in $\mathbb{R}P^4$, $A=\mathrm{diag}(a,a,b,-1,-1)$, and $J=\mathrm{diag}(1,1,1,1,-1)$. By \cite[Theorem~3]{PSS11}, the circles on $\Psi$ are the sections by the $2$-planes contained in the quadrics $X^T (A-tJ)X=0$ with $\det(A-tJ)=0$. The latter equation has roots $t=a,b,\pm 1$, where $b>1$, leading to the quadrics of signatures
$$
(0,0,\mathrm{sgn}(b-a),\mathrm{sgn}(-a-1),\mathrm{sgn}(a-1)),\quad
(\mathrm{sgn}(a-b),\mathrm{sgn}(a-b),0,-,+),\quad
(\mathrm{sgn}(a\mp1),\mathrm{sgn}(a\mp1),+,-,0).
$$
The first signature is either $(0,0,-,-,+)$ or $(0,0,-,+,+)$ because $a\ne b,\pm 1$ and $\mathrm{sgn}(b-a)=\mathrm{sgn}(-a-1)=\mathrm{sgn}(a-1)$ would contradict to the assumption $b>1$. The second and the third signatures can only be $(0,-,-,-,+)$ or $(0,-,+,+,+)$ because $a\ne b,\pm 1$. Thus the quadric $X^T (A-aJ)X=0$ contains just one family of $2$-planes, whereas the other quadrics $X^T (A-tJ)X=0$ contain no $2$-planes. Hence $\Psi$ is covered by just one family of pairwise disjoint circles, as required.

The exceptional surfaces are clearly rotational/translational. Conversely, a rotational surface~(D) that cannot be generated by rotation/translation of a circle or a line, contains just one circle or line through a generic point.
\end{proof}

%{\bf Add references to Krasauskas-Maeurer, Chionh-Goldman, Manocha-Canny, Schicho (on rational parametrization).}

{
\footnotesize

}

\footnotesize
\noindent
\textsc{Mikhail Skopenkov\\
National Research University Higher School of Economics (Faculty of Mathematics) \&\\
%and\\
Institute for Information Transmission Problems of the Russian Academy of Sciences} %\\
%Bolshoy Karetny per. 19, bld. 1, Moscow, 127994, Russian Federation
\\
%and\\
%King Abdullah University of Science and Technology} \\
%P.O. Box 2187,
%4700 Thuwal, 23955-6900, Kingdom of Saudi Arabia \\
\texttt{mikhail$\cdot$skopenkov\,@\,gmail$\cdot$com} \quad \url{https://www.mccme.ru/~mskopenkov/}

%\bigskip
\bigskip

\noindent
\textsc{Rimvydas Krasauskas\\
Faculty of Mathematics and Informatics, Vilnius University}\\
% Naugarduko g. 24, LT-03225, Vilnius, Lithuania
\texttt{rimvydas.krasauskas@mif.vu.lt}

%\textbf{\dots Probably, the end of the first paper here \dots}

\end{document}